%
%
%
%
\documentclass[a4paper,11pt,final]{article}
%

\usepackage{a4wide}
\usepackage[latin1]{inputenc}
\usepackage[T1]{fontenc}
\usepackage{amsmath, amsthm}
\usepackage{amssymb}
\usepackage{stmaryrd}
\usepackage{enumerate}
\usepackage{graphicx,psfrag}
\usepackage{epsfig}
\usepackage{mathrsfs}

\usepackage{pgf,pgfarrows,pgfautomata,pgfheaps,pgfnodes,pgfshade}
\usepackage{verbatim}
\DeclareGraphicsExtensions{.jpg,.png,.eps}
\usepackage{graphicx}
\usepackage[latin1]{inputenc}

\delimitershortfall=-0.1pt

\theoremstyle{plain}
\newtheorem{proposition}{Proposition}[section]
\newtheorem{theorem}[proposition]{Theorem}

\theoremstyle{definition}
\newtheorem{definition}[proposition]{Definition}

\theoremstyle{remark}

\newtheorem{remark}[proposition]{Remark}

\renewenvironment{proof}{\smallskip\noindent\emph{\textbf{Proof.}}\hspace{1pt}}%
{\hspace{-5pt}{\nobreak\quad\nobreak\hfill\nobreak$\square$\vspace{8pt}%
\par}\smallskip\goodbreak}

\newenvironment{proofof}[1]{\smallskip\noindent\emph{\textbf{Proof of #1.}}%
\hspace{1pt}}{\hspace{-5pt}{\nobreak\quad\nobreak\hfill\nobreak%
$\square$\vspace{8pt}\par}\smallskip\goodbreak}

\newcommand{\lip}{{\rm Lip}}
\newcommand{\Mes}{\mathcal{M}}

\renewcommand{\div}{{\mathrm{div}}}
\newcommand{\Div}{{\mathrm{Div}}}
\renewcommand{\L}[1]{\mathbf{L^#1}}
\newcommand{\Lloc}[1]{\mathbf{L^{#1}_{loc}}}
\newcommand{\C}[1]{\mathscr{C}^{#1}}
\newcommand{\Cc}[1]{\mathscr{C}_c^{#1}}
\newcommand{\modulo}[1]{{\left|#1\right|}}
\newcommand{\norma}[1]{{\left\|#1\right\|}}
\newcommand{\reali}{{\mathbb{R}}}
\newcommand{\naturali}{{\mathbb{N}}}
\newcommand{\tv}{\mathrm{TV}}
\newcommand{\BV}{\mathbf{BV}}
\newcommand{\Lip}{\mathinner\mathbf{Lip}}
\renewcommand{\epsilon}{\varepsilon}
\renewcommand{\phi}{\varphi}
\newcommand{\rpic}{\reali_+}
\newcommand{\rpis}{\reali_+^*}
\newcommand{\pt}{\partial}

\newcommand{\W}[2]{\mathbf{W^{#1,#2}}}
\newcommand{\dst}{\displaystyle}
\renewcommand{\d}[1]{\mathinner{\mathrm{d}{#1}}}
\newcommand{\supp}{\mathrm{Supp}}


\begin{document}

\author{Magali {Lécureux-Mercier}$^1$ }

\title{Conservation laws with a non-local flow \\ Application to Pedestrian traffic.}

\footnotetext[1]{Technion, Israel Institute of Technology, Amado Building,
 32000 Haifa, Israel}
%
%

%

%
%
\maketitle

\begin{abstract} 
In this note, we introduce some models of pedestrian traffic and prove existence and uniqueness for these models. 
\end{abstract}

\section{Introduction}\label{sec:intro}
In the last decades, the crowds' dynamics has attracted a lot of  scientific interest. A first reason of this interest is to understand  how crowd disasters happened in some panic events, for example at the end of football play, during concerts, in the case of fire, or  in place of pilgrimage (e.g. on Jamarat Bridge in Saudi Arabia, see \cite{HelbingJohanssonZein}). Another 
reason lies in architecture of buildings such as subway stations or stadiums, where a lot of pedestrians are crossing. The goals here are consequently twofold: in one hand we want to understand the behavior of pedestrians in panic and adapt the regulation of traffic in order to avoid deaths; in the other hand, we want to modelize the interaction of several kinds of pedestrians with different objectives and  in particular study how the geometry influences the  general pattern.  

In a macroscopic setting,  a population is described by its density $\rho$ which satisfies the conservation law
\begin{align}
\pt_t \rho +\Div( \rho \, V(t,x,\rho))&=0\,,&\rho(0, \cdot)&=\rho_0\,, \label{eq:nl}
\end{align}
where $V(t, x, \rho)$ is a vector field describing the velocity of the pedestrians depending on the time $t\geq 0$, the space $x\in \reali^N$ and the density $\rho$. According to the choice of $V$,  various behaviors can be observed. Several authors already studied pedestrian traffic in  two dimensions space ($N=2$). Some of these models are local in $\rho$, that is to say $V$ depends on the local density $\rho(t,x)$ \cite{CosciaCanavesio, BellomoDogbe_review, Hughes1, Hughes2, MauryChupin, MauryChupinVenel} ; other models use not only the local density $\rho(t,x)$ but the entire distribution of $\rho$, for example they depend on the convolution product $\rho(t)*\eta$ \cite{DiFrancesco, PiccoliTosin}. Here, in the line of preceding papers \cite{ColomboHertyMercier, ColomboGaravelloMercier, ColomboMercier, ColomboLecureux}, we present nonlocal macroscopic models for pedestrian traffic, we study these models and compare their properties.

Our first aim is to modelize the behavior of pedestrians in different situations: crowd  behaves indeed differently in panic or in a normal situation where courtesy rules do apply. We also introduce models in the case of a population interacting with an individual, and in  the case of  several populations with different objectives. For instance, we want to include in our study the case of two populations crossing in a corridor. 

Second, we want to study the introduced models  and prove existence and uniqueness of solutions under various sets of hypotheses. We will use two kinds of arguments: the first one comes from Kru\v zkov theory \cite{Kruzkov, Lecureux}, the second one from the optimal transport theory. We want to prove existence and uniqueness of solutions for the various models presented below. Let us concentrate on the case of \emph{pedestrians in panic}:
\begin{equation}\label{eq:panic0}
\pt_t \rho+\Div (\rho\, v(\rho*\eta)\vec \nu(x))=0.
\end{equation} 
All the proofs of existence and uniqueness in  this note are based on the following  idea: let us fix the nonlocal term and, instead of (\ref{eq:panic0}), we  study the Cauchy problem
\begin{equation}\label{eq:fix}
\pt_t \rho+\Div (\rho\, v(r*\eta)\vec \nu(x))=0\,, \qquad  \rho(0)=\rho_0\,,
\end{equation}
where $r$ is a given function. Then, we introduce  the application
\begin{equation}\label{eq:Q}
\mathscr{Q}\;:\;\left\{
\begin{array}{ccc}
r & \mapsto & \rho\\
X&\to &X
\end{array}
\right\}\,,
\end{equation} 
where the space $X$ has to be chosen so that
\begin{description}
\item[(a)] $X$ is equipped with a distance $d$ that makes $X$ complete;
\item[(b)] the application $\mathscr{Q}$ is well-defined: the solution $\rho\in X$ exists and is unique (for  a fixed $r$);
\item[(c)] the application $\mathscr{Q}$ is a contraction.
\end{description}
Once we have fullfilled these conditions, we can prove existence of a solution using a fixed point argument. 

Note that, in the modelization of pedestrian traffic,  the space dimension $N$ has to be equal to two, but our results are in fact true for all $N\in \naturali$. For instance, they can be adapted in dimension $N=3$ to modelize the behavior of fishes or birds. Consequently, we keep here a general $N$, even if we essentially think to the case $N=2$.

This note is organized as follows: in Section \ref{sec:model}, we describe some nonlocal models and their properties. In Section \ref{sec:k} we study one of these models through Kru\v zkov theory and in Section \ref{sec:ot} we study the same model through optimal transport theory.

\section{Pedestrian Traffic Modelization}\label{sec:model}
\subsection{One-Population model}
\subsubsection{Pedestrian in panic}\label{sec:panic}
The first model we present corresponds to pedestrians in panic and was studied in \cite{ColomboHertyMercier, ColomboLecureux}, in collaboration with R. M. Colombo and M. Herty. A panic phenomenon appears under special circumstances in crowded events. In these cases, the people are no longer rational and try, no matter how, to reach their target. 
Let us denote $\rho(t,x)$ the density of pedestrians at time $t$ and position $x\in \reali^N$. We consider the Cauchy problem:
\begin{equation}\label{eq:panic}
\pt_t \rho+\Div\left( \rho \, v(\eta* \rho(t,x))\,  \vec \nu(x)\right)=0\,;\qquad \rho(0,\cdot)=\rho_0\,.
\end{equation}
Here, $v$ is a real function describing the speed of the pedetrians. This function does not depend on the local density $\rho(t,x)$ but on the averaged density $\rho(t)*\eta(x)=\int_{\reali^N} \rho(t, x-y)\,\eta(y)\,\d{y}$. The vector field, $\vec \nu(x)$ describes the direction that the pedestrian located in $x$ will follow, independently from the distribution of the pedestrians' density. Note that we are working here on the all of $\reali^N$ and not on a subset of $\reali^N$; thus, we are \emph{not} working  on a  restriction to a room, for example. However, we can still introduce the presence of walls and obstacles in the choice of the vector field $\vec \nu$. 
Let us denote $\Omega \subset \reali^N$  the space where the pedestrians are authorized to walk, e.g. a room.
If we choose $\vec \nu(x)$ in a nice way (for example we can require that on the walls, i.e. for all  $x\in \pt\Omega$, $\vec \nu(x)$ coincides with the entering normal to $\Omega$), then we can  conclude to the invariance of the room. More precisely, if the initial density has support on some closed set $\Omega \subset \reali^N$, then the solution will have support contained in $\Omega$ for all time. This remark allows us to avoid considering  any boundaries and to have solutions on all $\reali^N$.

Using the Kru\v zkov theory on classical scalar conservation laws, we are able to prove:
\begin{theorem}[see \cite{ColomboLecureux}]\label{thm:panicK}
Let $\rho_0\in (\L1\cap\L\infty\cap\BV)(\reali^N, \rpic)$. Assume $v\in(\C2\cap \W2\infty)(\reali, \reali)$, $\vec \nu\in (\C2\cap\W21)(\reali^N, \reali^N)$, $\eta\in (\C2\cap\W2\infty)(\reali^N, \reali)$. Then there exists a unique weak entropy solution $\rho=S_t\rho_0\in \C0(\rpic, \L1(\reali^N, \rpic))$ to (\ref{eq:panic}) with initial condition $\rho_0$. 
Furthermore we have the estimate
\begin{equation}
\norma{\rho(t)}_{\L\infty}\leq \norma{\rho_0}_{\L\infty}e^{Ct}\,,
\end{equation}
where the constant $C$ depends on $v$, $\vec \nu$ and $\eta$.
\end{theorem}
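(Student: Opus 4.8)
The plan is to obtain $\rho$ as the fixed point of the map $\mathscr{Q}$ from \eqref{eq:Q}, following the scheme (a)--(c) of the introduction. For a fixed $r\in\C0([0,T],\L1)$, equation \eqref{eq:fix} is the scalar conservation law $\pt_t\rho+\Div(\rho\, w(t,x))=0$, \emph{linear} in $\rho$, with velocity field $w(t,x)=v\big((r*\eta)(t,x)\big)\,\vec\nu(x)$. First I would check that $w$ inherits enough regularity: since $\eta\in\W2\infty$, the convolution $r*\eta$ is $\C2$ in $x$ with $\nabla(r*\eta)=r*\nabla\eta$ and $\norma{r*\nabla\eta}_{\L\infty}\le\norma{r}_{\L1}\norma{\nabla\eta}_{\L\infty}$, and likewise for second derivatives; combined with $v\in\C2\cap\W2\infty$ and $\vec\nu\in\C2\cap\W21$ this makes the flux $f(t,x,\rho)=\rho\,w(t,x)$ admissible for a Kru\v zkov-type theory for conservation laws with $(t,x)$-dependent flux \cite{Kruzkov,Lecureux}. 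This yields a unique weak entropy solution $\rho=\mathscr{Q}(r)\in\C0([0,T],\L1)$, settling (b).

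Next I would establish the a priori bounds that let $\mathscr{Q}$ act on a good space. Because the flux vanishes at $\rho=0$ and the equation is conservative, the solution stays nonnegative and $\norma{\rho(t)}_{\L1}=\norma{\rho_0}_{\L1}$. Integrating along characteristics, $\tfrac{d}{dt}\rho=-\rho\,\div w$, gives
\[
\norma{\rho(t)}_{\L\infty}\le\norma{\rho_0}_{\L\infty}\exp\!\Big(\int_0^t\norma{\div w}_{\L\infty}\,\d{s}\Big),
\]
where $\div w=v'(r*\eta)\,(r*\nabla\eta)\cdot\vec\nu+v(r*\eta)\,\div\vec\nu$ is bounded uniformly in $r$ once the conserved mass $\norma{r}_{\L1}=\norma{\rho_0}_{\L1}$ is fixed; this already produces the announced estimate $\norma{\rho(t)}_{\L\infty}\le\norma{\rho_0}_{\L\infty}e^{Ct}$ with $C$ depending on $v,\vec\nu,\eta$ and $\norma{\rho_0}_{\L1}$. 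The delicate point is the $\BV$ bound: for space-dependent flux the total variation is not decreasing, and one must show $\tv(\rho(t))\le e^{Ct}\big(\tv(\rho_0)+Ct\big)$. This is exactly where the second-order hypotheses $\eta\in\W2\infty$ and $\vec\nu\in\W21$ enter, since the Kru\v zkov $\BV$ estimate controls $\tv(\rho(t))$ through $\norma{\nabla_x\pt_\rho f}$ and $\norma{\nabla_x\div_x f}$, i.e. through second derivatives of $w$.

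With these estimates I would fix $T>0$ and set
\[
X=\C0\big([0,T],\mathcal{D}\big),\qquad
\mathcal{D}=\Big\{\rho\in(\L1\cap\L\infty\cap\BV)(\reali^N,\rpic):\ \norma{\rho}_{\L1}=\norma{\rho_0}_{\L1},\ \norma{\rho}_{\L\infty}\le M,\ \tv(\rho)\le V\Big\},
\]
with $M,V$ the bounds produced above on $[0,T]$; equipped with the $\C0([0,T],\L1)$ distance, $X$ is complete, giving (a), and the a priori bounds guarantee $\mathscr{Q}:X\to X$. For (c) I would use the $\L1$ stability of entropy solutions with respect to the flux: for $r_1,r_2\in X$ with solutions $\rho_1,\rho_2$ issued from the same datum $\rho_0$,
\[
\norma{\rho_1(t)-\rho_2(t)}_{\L1}\le\int_0^t\!\!\int_{\reali^N}\big|\div_x(f_1-f_2)(s,x,\rho_2)\big|\,\d{x}\,\d{s}+\int_0^t\tv(\rho_2(s))\,\norma{\pt_\rho(f_1-f_2)}_{\L\infty}\,\d{s}.
\]
Since $f_i=\rho\,v(r_i*\eta)\vec\nu$, the Lipschitz continuity of $v$ and $v'$ together with Young's inequality bound both terms by $C\int_0^t\norma{r_1(s)-r_2(s)}_{\L1}\,\d{s}$, with $C$ depending only on the data and on $M,V$. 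Hence $\mathscr{Q}$ is a contraction for $T$ small, and Banach's fixed point theorem gives a unique solution on $[0,T]$. As $M,V$ grow at most exponentially and never blow up in finite time, I would iterate on consecutive intervals to extend the solution, and thus the semigroup $S_t$, to all of $\rpic$; evaluating the $\L\infty$ bound on the fixed point yields the stated estimate. The main obstacle is the $\BV$ estimate for the frozen problem, because it is simultaneously what makes $\mathscr{Q}$ preserve $\mathcal{D}$ and what controls the constant in the contraction (through the factor $\tv(\rho_2)$), so the two requirements must be balanced against one another.
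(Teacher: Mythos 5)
Your proposal is correct and follows essentially the same route as the paper: freeze the nonlocal term, solve the resulting (linear-in-$\rho$) frozen problem via Kru\v zkov's theorem, combine the $\L1$ stability estimate with respect to the flux (Theorem \ref{thm:stab}) with the total variation bound (Theorem \ref{teo:tv}) to make $\mathscr{Q}$ a contraction for small $T$, and iterate in time using that the $\L\infty$ and $\BV$ bounds grow at most exponentially. The only difference is bookkeeping: you run the fixed point on the closed set $\mathcal{D}$ carrying the mass/$\L\infty$/TV bounds, whereas the paper works on all of $\C0([0,T],\L1(\reali^N,\rpic))$ and absorbs those a priori bounds into the contraction constant $f(T)$ --- if anything, your version makes explicit a point the paper glosses over, namely that the constants in Theorems \ref{thm:stab} and \ref{teo:tv} depend on $\norma{r}_{\L1}$ and on the $\L\infty$/TV bounds of the frozen solutions, which is precisely what restricting to $\mathcal{D}$ controls.
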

For the definition of weak entropy solutions see Section \ref{sec:k}; the proof is defered to Section \ref{sec:proofK}.
Note that in Theorem \ref{thm:panicK},  the hypotheses are very strong. Let us denote $\mathcal{P}(\reali^N)$ the set of probability measures on $\reali^N$ and $\mathcal{M}^+(\reali^N)$ the set of positive measures on $\reali^N$. In collaboration with G. Crippa, using now some tools from optimal transport theory,  we obtained  the better result:
\begin{theorem}[see \cite{CrippaMercier}]\label{thm:panicOT}
Let $\rho_0\in \mathcal{M}^+(\reali^N)$. Assume $v\in (\L\infty\cap\lip)(\reali, \reali)$, $\vec \nu\in (\L\infty\cap\lip)(\reali^N, \reali^N)$, $\eta\in (\L\infty\cap\lip)(\reali^N, \rpic)$. Then there exists a unique weak measure solution $\rho\in \L\infty(\rpic, \mathcal{M}^+(\reali^N))$ to (\ref{eq:panic}) with initial condition $\rho_0$. 

If furthermore $\rho_0\in \L1(\reali^N, \rpic)$ then for all $t\geq 0$, the solution  $\rho$ satisfies also $\rho(t)\in \L1(\reali^N, \rpic)$.
\end{theorem}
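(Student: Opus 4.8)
The plan is to carry out the fixed-point scheme described after (\ref{eq:fix})--(\ref{eq:Q}), now taking for $X$ a space of measures endowed with a transport distance, as the optimal-transport framing suggests. Since the construction below conserves total mass, I would fix $m=\rho_0(\reali^N)$ and a horizon $T>0$, and work on
\[
X_T=\left\{r\in\L\infty([0,T],\Mes^+(\reali^N)) \;:\; r(t)(\reali^N)=m \text{ for a.e. } t\right\},
\]
metrized by $d(r_1,r_2)=\mathrm{ess\,sup}_{t\in[0,T]}W_1(r_1(t),r_2(t))$, where $W_1$ is the Kantorovich--Rubinstein distance on positive measures of mass $m$ (one may replace $W_1$ by the bounded-Lipschitz/flat distance to avoid any first-moment assumption; the estimates are identical). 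Completeness of $(X_T,d)$, i.e. condition \textbf{(a)}, follows from completeness of the underlying space of fixed-mass measures under $W_1$.

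For fixed $r\in X_T$, equation (\ref{eq:fix}) is the \emph{linear} continuity equation $\pt_t\rho+\Div(\rho\,w_r)=0$ with given velocity field $w_r(t,x)=v\bigl((\eta*r(t))(x)\bigr)\,\vec\nu(x)$. First I would check that $w_r$ is bounded and Lipschitz in $x$ uniformly in $t$ and in $r\in X_T$: indeed $\eta*r(t)$ is bounded by $m\,\norma{\eta}_{\L\infty}$ and Lipschitz with constant $m\,\lip(\eta)$, so $v\circ(\eta*r(t))$ is bounded and Lipschitz since $v\in\L\infty\cap\lip$, and multiplying by the bounded Lipschitz field $\vec\nu$ preserves these properties. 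By Cauchy--Lipschitz this yields a unique Lipschitz flow $\Phi^r_t$ solving $\dot x=w_r(t,x)$, and the unique weak measure solution of (\ref{eq:fix}) is the push-forward $\rho(t)=(\Phi^r_t)_\#\rho_0$. Setting $\mathscr{Q}(r)(t)=(\Phi^r_t)_\#\rho_0$ defines the operator (\ref{eq:Q}); since push-forward preserves mass, $\mathscr{Q}$ maps $X_T$ into itself and $\norma{\rho(t)}$ is constant, which already gives condition \textbf{(b)} and the $\L\infty(\rpic,\Mes^+)$ bound.

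The crux is the contraction \textbf{(c)}. Writing $L$ for the common Lipschitz constant of $w_{r_1},w_{r_2}$, a Gronwall argument on the two flows gives $\modulo{\Phi^{r_1}_t(x)-\Phi^{r_2}_t(x)}\le\int_0^t e^{L(t-s)}\norma{w_{r_1}(s)-w_{r_2}(s)}_{\L\infty}\,\d{s}$. The velocity difference is controlled using that $v,\eta$ are Lipschitz and $\vec\nu$ is bounded: $\norma{w_{r_1}(s)-w_{r_2}(s)}_{\L\infty}\le\lip(v)\,\norma{\vec\nu}_{\L\infty}\,\norma{\eta*(r_1(s)-r_2(s))}_{\L\infty}$, and the convolution term is bounded by $\lip(\eta)\,W_1(r_1(s),r_2(s))$ via Kantorovich--Rubinstein duality (this is exactly where equality of masses is used). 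Finally, the plan $(\Phi^{r_1}_t,\Phi^{r_2}_t)_\#\rho_0$ gives the push-forward estimate $W_1\bigl(\mathscr{Q}(r_1)(t),\mathscr{Q}(r_2)(t)\bigr)\le\int\modulo{\Phi^{r_1}_t-\Phi^{r_2}_t}\,\d{\rho_0}\le m\,\sup_x\modulo{\Phi^{r_1}_t(x)-\Phi^{r_2}_t(x)}$. Combining the three bounds yields a Volterra-type inequality $W_1(\mathscr{Q}(r_1)(t),\mathscr{Q}(r_2)(t))\le C\int_0^t e^{L(t-s)}W_1(r_1(s),r_2(s))\,\d{s}$ with $C=m\,\lip(v)\,\lip(\eta)\,\norma{\vec\nu}_{\L\infty}$. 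Taking $\mathrm{ess\,sup}$ over $t\in[0,T]$ makes $\mathscr{Q}$ a contraction for $T$ small (or, on any fixed interval, after passing to the equivalent weighted norm $\mathrm{ess\,sup}_t e^{-\lambda t}W_1(\cdot,\cdot)$ with $\lambda$ large). Banach's theorem gives a unique fixed point, hence a solution of (\ref{eq:panic}); iterating over successive intervals using mass conservation extends it to all of $\rpic$.

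Uniqueness follows by the same mechanism: any weak measure solution $\rho$ conserves mass (test the weak formulation against cutoffs of $1$; boundedness of $w$ prevents escape to infinity in finite time), so $\rho\in X_T$ and solves (\ref{eq:fix}) with $r=\rho$; by uniqueness for the continuity equation with Lipschitz field, $\rho=\mathscr{Q}(\rho)$, and the contraction forces $\rho$ to be the unique fixed point. For the last assertion, if $\rho_0\in\L1(\reali^N,\rpic)$ then $\rho_0\ll\mathcal{L}^N$; since $\Phi^r_t$ is bi-Lipschitz it maps Lebesgue-null sets to Lebesgue-null sets, so $\rho(t)=(\Phi^r_t)_\#\rho_0$ stays absolutely continuous, while $\norma{\rho(t)}_{\L1}=\norma{\rho_0}_{\L1}=m$ by mass conservation, whence $\rho(t)\in\L1(\reali^N,\rpic)$. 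The main obstacle is the contraction step: securing a Lipschitz constant for $w_r$ that is \emph{uniform} over the fixed-mass ball (so the Gronwall exponent $L$ is independent of $r$), and correctly converting the convolution difference into a transport distance via duality on measures of equal mass.
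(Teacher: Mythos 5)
Your proposal is correct and follows essentially the same route as the paper: a Banach fixed point on $\L\infty([0,T],\cdot)$-in-time measures metrized by the order-one Wasserstein distance, with the frozen problem (\ref{eq:fix}) solved by the flow push-forward (the paper's Lagrangian solution) and the contraction obtained from a Gronwall estimate on the flows combined with Kantorovich--Rubinstein duality to turn $\norma{\eta*(r_1-r_2)(s)}_{\L\infty}$ into $\lip(\eta)\,W_1(r_1(s),r_2(s))$. The one step you cite as classical --- uniqueness of weak measure solutions of the linear continuity equation with a bounded Lipschitz field, which you need to identify an arbitrary measure solution with a fixed point of $\mathscr{Q}$ --- is exactly what the paper proves in Proposition \ref{prop:uni} by a duality argument, so invoking it is legitimate and the two arguments otherwise coincide.
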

For the definition of weak measure solution see Section \ref{sec:ot}; the proof is defered to Section \ref{sec:proofot}.

Note that for the model (\ref{eq:panic}), there is a priori no uniform $\L\infty$ bound on the density. Indeed, heuristically, considering the case in which the density is maximal, equal to 1, on the trajectory of a pedestrian located in $x$. If the averaged density around $x$ is strictly less than 1 (because, for example there is no one behind this pedestrian), then the speed $v(\rho*\eta)$ will be strictly positive, which means the pedestrian in $x$ will try to go forward, even though there is a queue in front of him. Consequently, we expect the density to become larger than one.

This behavior is not really unexpected in the case of panic. In fact, in some events the density attained up to 10 persons per square meter, which is obviously too much and a cause of deaths (see \cite{HelbingJohanssonZein}). Consequently, it is quite satisfactory to recover this behavior. One of our goal in this context is then to introduce a cost functional allowing to characterize the cases in which  the density is too high, and to find extrema of this functional. Let us introduce 
\[
J_T(\rho_0)=\int_0^T\int_{\Omega} f(S_t \rho_0)\d{x}\,,
\]
where $\Omega\subset\reali^N$ is the room, $\rho_0$ is the initial condition and $S_t\rho_0$ is the semi-group generated by Theorem \ref{thm:panicK}. We choose the function $f\in \C1(\reali, \rpic)$ so  that  it is equal to zero for any density $\rho$ less than a fixed threshold $\rho_c$ and  so that it is stictly increasing on $[\rho_c, +\infty[$. 
Consequently, the functional $J_T$ above allows to characterize the solutions with too high density and in particular it vanishes if the set $\{(t,x)\in [0,T]\times \reali^N \;:\; S_t\rho(x)\geq \rho_c\}$ has measure zero. We are then  interested in finding the minima of this cost functional. 

Using the Kru\v zkov theory we prove the following differentiability result:
\begin{theorem}[see \cite{ColomboHertyMercier, ColomboLecureux}]\label{thm:Gdiff}
Let $\rho_0\in (\W2\infty\cap \W21)(\reali^N, \rpic)$, $r_0\in (\W11\cap\L\infty)(\reali^N, \reali)$ and denote $\rho=S_t \rho_0$. Assume $v\in (\C4\cap\W2\infty)(\reali,\reali )$, $\vec \nu\in (\C3\cap\W21)(\reali^N, \reali^N)$, $\eta\in (\C3\cap \W2\infty)(\reali^N, \rpic)$.  Then there exists a unique weak entropy solution $r=\Sigma_t^\rho r_0$ to the Cauchy problem
\begin{equation}\label{eq:linearised}
\pt_t r+\Div (r \, v(\rho*\eta) \,\vec \nu(x))=-\Div ( \rho\, v'(\rho*\eta)\,\vec \nu(x))\,,\qquad r(0)=r_0\,.
\end{equation}

Furthermore, the semi-group $S_t$ obtained in Theorem \ref{thm:panicK} is Gâteaux-differentiable, that is to say, for all $\rho_0\in (\W21\cap\W2\infty)(\reali^N, \rpic)$, $ r_0\in(\W11\cap\L\infty)(\reali^N, \reali) $,
\begin{align*}
\lim_{h\to 0}\norma{\frac{S_t(\rho_0 +h r_0) -S_t\rho_0}{h}-\Sigma_t^\rho r_0}_{\L1}=0\,.
\end{align*}
\end{theorem}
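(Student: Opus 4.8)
The plan is to establish G\^ateaux-differentiability by the standard two-step scheme: first solve the linearised Cauchy problem \eqref{eq:linearised} for the given $\rho=S_t\rho_0$ to obtain the candidate derivative $\Sigma_t^\rho r_0$, then show that the difference quotient of the nonlinear semi-group converges to it in $\L1$. Since \eqref{eq:linearised} is itself a linear scalar conservation law with a space- and time-dependent (but prescribed) coefficient $v(\rho*\eta)\vec\nu(x)$ and a source term $-\Div(\rho\,v'(\rho*\eta)\vec\nu)$, existence and uniqueness of its weak entropy solution $r=\Sigma_t^\rho r_0$ follow from the same Kru\v zkov-type machinery invoked in Theorem~\ref{thm:panicK}. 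The extra regularity demanded here ($v\in\C4$, $\vec\nu\in\C3$, $\eta\in\C3$) is precisely what is needed to guarantee that $\rho*\eta$ is smooth enough for the coefficient and source to have the Lipschitz-in-$x$ and $\BV$ regularity that Kru\v zkov theory requires, and to control the $\L\infty$ and $\BV$ norms of $r$ uniformly in time via a Gronwall argument; the linearity of the equation makes the source term the only genuinely new ingredient, and one checks that $\rho(t)\in\W2\infty\cap\W21$ (propagated from $\rho_0$) keeps $\Div(\rho\,v'(\rho*\eta)\vec\nu)$ in $\L1\cap\L\infty$.

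For the differentiability statement itself, I would fix $t$ and set $\rho^h=S_t(\rho_0+hr_0)$, $\rho=S_t\rho_0$, and $z_h=(\rho^h-\rho)/h-\Sigma_t^\rho r_0$; the goal is $\norma{z_h}_{\L1}\to0$. The strategy is to write down the equation satisfied by the finite difference $w_h=(\rho^h-\rho)/h$ and compare it with \eqref{eq:linearised}. Subtracting the conservation laws for $\rho^h$ and $\rho$ and dividing by $h$, one finds that $w_h$ solves a linear conservation law of the same type as \eqref{eq:linearised} but with coefficients evaluated at intermediate states and with a remainder that measures the failure of $v(\rho^h*\eta)$ and $\rho^h v'(\rho^h*\eta)$ to equal their linearisations. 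The key analytic point is that, by the Lipschitz dependence of the semi-group on its initial datum (a stability estimate of the form $\norma{S_t(\rho_0+hr_0)-S_t\rho_0}_{\L1}\leq C\,h\,\norma{r_0}_{\L1}$, itself a by-product of Theorem~\ref{thm:panicK}), the differences $\rho^h-\rho$ and $(\rho^h-\rho)*\eta$ are $O(h)$ in the relevant norms, so the nonlinear remainders are $o(h)$ after division; a Gronwall estimate on the equation satisfied by $z_h$ then closes the argument.

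The main obstacle will be the rigorous control of the remainder terms, i.e. showing that the error between the true nonlinear increment and its linearisation is genuinely $o(1)$ in $\L1$ after dividing by $h$. This requires more than Lipschitz stability: one needs a quantitative second-order (or at least uniform-continuity) control on how $v$ and $v'$ react to the $O(h)$ perturbation of the convolved density $\rho*\eta$, which is exactly why the hypotheses ask for $v\in\C4$ and $\eta\in\C3$ rather than merely $\C2$. Concretely, terms such as $\frac{1}{h}\bigl(v(\rho^h*\eta)-v(\rho*\eta)-v'(\rho*\eta)\,(\rho^h-\rho)*\eta\bigr)$ must be shown to vanish in the appropriate norm; this follows from a Taylor expansion of $v$ to second order combined with the uniform $\L\infty$ bound on the densities furnished by Theorem~\ref{thm:panicK} and the $O(h)$ bound on the increment, but it must be carried out carefully because it appears inside a flux that is then differentiated. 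Once this remainder is dispatched, the uniform $\BV$ and $\L\infty$ bounds propagated through the family $\{\rho^h\}_{h}$ provide the compactness and the Gronwall-type contraction needed to conclude $\norma{z_h}_{\L1}\to0$ uniformly on compact time intervals.
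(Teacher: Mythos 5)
Your proposal is correct and, in substance, takes the same route as the paper: solve the linearised problem \eqref{eq:linearised} by the Kru\v zkov fixed-point scheme (Theorem~\ref{thm:existlin} in the paper, proved like Theorem~\ref{thm:panicK}), then show the difference quotient converges by combining an $O(h)$ Lipschitz bound on $\rho_h-\rho$, a second-order Taylor control of the nonlinearity, and a stability-plus-Gronwall argument. The one genuine difference is the decomposition. You derive the equation satisfied by $w_h=(\rho_h-\rho)/h$ (coefficients at intermediate states, plus a remainder) and compare it with $r$; the paper instead sets $z_h=\rho+hr$ and notes that, by linearity of the two exact equations, $z_h$ solves the explicit Cauchy problem $\pt_t z_h+\Div\left(z_h\left(v(\rho*\eta)+hv'(\rho*\eta)(r*\eta)\right)\vec\nu(x)\right)=h^2\Div\left(r\,v'(\rho*\eta)(r*\eta)\vec\nu(x)\right)$ with datum $\rho_0+hr_0$. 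Since $\rho_h-z_h=h\left(w_h-r\right)$, the quantity estimated is literally the same; what the paper's formulation buys is that both objects compared, $\rho_h$ and $z_h$, are entropy solutions of explicit Cauchy problems sharing the same initial datum, so the flow/source stability estimate (Theorem~\ref{thm:stab}) together with the TV bound (Theorem~\ref{teo:tv}) applies verbatim --- the Taylor remainder is absorbed into the flow difference and produces exactly the term $\frac{1}{h}\norma{\rho_h-\rho}_{\L\infty([0,T],\L1)}^2$, which is $O(h)$ by the Lipschitz bound. Your route must additionally justify what equation a difference quotient of entropy solutions satisfies, and a plain ``Gronwall estimate on the equation for $z_h$'' is not available for entropy solutions: to compare $w_h$ and $r$ you still need precisely Theorem~\ref{thm:stab}, i.e.\ the doubling-of-variables machinery, so nothing is saved. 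One small correction: the strengthened hypotheses ($v\in\C4$, $\vec\nu,\eta\in\C3$) are not what makes the second-order Taylor expansion of $v$ work --- that only needs $v\in\W2\infty$; they are there so that $\W11$- and $\W2\infty$-type regularity of $\rho$ and $r$ can be propagated in time, which is what puts the source of \eqref{eq:linearised} and the quadratic error term within the scope of the hypotheses \textbf{(K)}, \textbf{(FS)}, \textbf{(TV)} of Theorems~\ref{thm:stab} and~\ref{teo:tv}.
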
 
The proof is defered to Section \ref{sec:proofGdiff}. 

It is not possible to obtain the same result by the use of optimal transport theory. Indeed, it seems already not possible to find a good definition of Gâteaux differentiability  on the set of probability measures equipped with the Wasserstein distance of order 1.

\subsubsection{Orderly crowd}
In opposite to the previous model, for a model of orderly crowd it is required to have a uniform $\L\infty$ bound on the density. In collaboration with R. M. Colombo and M. Garavello \cite{ColomboGaravelloMercier}, we studied the equation 
\begin{equation}\label{eq:order}
\pt_t \rho+\Div\left[ \rho v(\rho) \big( \vec{\nu}(x) -  \frac{\nabla(\rho*\eta)}{ \sqrt{1+\norma{\nabla(\rho*\eta)}^2} } \big)\right] =0\,;
\end{equation}
with $\rho_0\in( \L1\cap\L\infty\cap\BV)(\reali^N;\reali)$.

In this model, the speed $v$ depends on the local density $\rho(t,x)$, which allows to prove some uniform bound in $\L\infty$. The prefered direction of the pedestrians  is still $\vec \nu (x)$, but they deviate from 
their optimal path trying to avoid entering regions with
higher densities. Indeed, $(\rho*\eta)$ is an average of the crowd density around $x$ and $-\nabla (\rho*\eta)$ is a vector going in the direction opposite to the area of maximal  averaged density. Due to the nonlinearity of the flow with respect to  the local $\rho$, it is no longer possible to use optimal transport theory. Hence, we use Kru\v zkov theory, to prove existence and uniqueness of solutions. We obtain the theorem:
\begin{theorem}[see \cite{ColomboGaravelloMercier}]\label{thm:order}
Assume that $v\in \C2([0,1], \rpic)$ satisfies $v(1)=0$, that $\vec \nu \in (\C2 \cap\W21\cap\W1\infty)(\reali^N, \reali^N)$,  and that $\eta \in (\C3\cap\W31\cap\W2\infty)(\reali^N, \reali)$. Then, for any $\rho_0 \in (\L1\cap\L\infty\cap\BV)(\reali^N , [0,1])$, there exists a unique weak entropy solution $\rho\in \C0(\rpic, \L1(\reali^N, [0,1]))$. 
\end{theorem}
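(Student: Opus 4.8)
The plan is to follow the fixed-point scheme outlined in Section~\ref{sec:intro}. First I freeze the nonlocal dependence: given $r$ in a space $X$ to be specified, I set
\[
\mathbf{V}[r](t,x)=\vec\nu(x)-\frac{\nabla(r*\eta)(t,x)}{\sqrt{1+\norma{\nabla(r*\eta)(t,x)}^2}},
\]
and study the frozen Cauchy problem $\dt\rho+\Div\big(\rho\,v(\rho)\,\mathbf{V}[r]\big)=0$, $\rho(0)=\rho_0$. Since $\nabla(r*\eta)=r*\nabla\eta$, the hypotheses $\eta\in\C3\cap\W31\cap\W2\infty$ and $\vec\nu\in\C2\cap\W21\cap\W1\infty$ ensure that $\mathbf{V}[r]$ is a bounded vector field, $\C2$ in $x$, whose divergence and one more derivative are bounded in $\L1\cap\L\infty$ (here one also uses that $z\mapsto z/\sqrt{1+\modulo{z}^2}$ is smooth with bounded derivatives). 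This turns the frozen equation into a scalar conservation law with a $(t,x)$-dependent flux $F(t,x,\rho)=\rho\,v(\rho)\,\mathbf{V}[r](t,x)$, to which Kru\v zkov theory (\cite{Kruzkov,Lecureux}) applies, yielding a unique weak entropy solution and hence a well-defined map $\mathscr{Q}:r\mapsto\rho$.

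Second, I would establish the a priori bounds that make $\mathscr{Q}$ stabilize a suitable set. The $[0,1]$ bound is the decisive point and the reason $v(1)=0$ is assumed: since $F(t,x,0)\equiv0$ and $F(t,x,1)\equiv0$, the constants $0$ and $1$ are stationary solutions of the frozen problem, so the comparison principle for entropy solutions propagates $0\le\rho_0\le1$ into $0\le\rho(t)\le1$ for all $t$. This is precisely the uniform $\L\infty$ bound that the panic model~(\ref{eq:panic}) lacked. Integrating the equation then gives an $\L1$ bound (mass is not exactly conserved because $\Div\mathbf{V}[r]\ne0$, but it is controlled by $\norma{\Div\mathbf{V}[r]}_{\L\infty}$ and Gronwall), and the standard total variation estimate for $(t,x)$-dependent fluxes gives $\tv(\rho(t))\le e^{Ct}\,\tv(\rho_0)+(\text{lower order})$, with $C$ depending on the derivatives of $F$, hence on the $\W2\infty$, $\W31$ norms of $\eta$ and the $\W1\infty$, $\W21$ norms of $\vec\nu$. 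Because these bounds are uniform on $[0,1]$-valued data, I take $X$ to be the set of $r\in\C0([0,T];\L1)$ with values in $[0,1]$ and total variation $\le M$; this set is closed for the $\C0([0,T];\L1)$ distance (the $[0,1]$ constraint passes to $\L1$-limits and $\tv$ is lower semicontinuous), hence complete, and $\mathscr{Q}$ maps it into itself for $M$ large and $T$ small.

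Third — and this is the main obstacle — I must show $\mathscr{Q}$ is a contraction. The tool is the Kru\v zkov stability estimate (obtained by doubling of variables, see \cite{Lecureux}) comparing two entropy solutions $\rho_1=\mathscr{Q}(r_1)$, $\rho_2=\mathscr{Q}(r_2)$ of laws with the \emph{same} datum $\rho_0$ but \emph{different} fluxes $F_1,F_2$:
\[
\norma{\rho_1(t)-\rho_2(t)}_{\L1}\le C\int_0^t\!\Big(\norma{(\pt_\rho F_1-\pt_\rho F_2)(s)}_{\L\infty}\tv(\rho_2(s))+\norma{\Div_x(F_1-F_2)(s)}_{\L1}\Big)\d{s},
\]
where the total variation of $\rho_2$ enters, which is exactly why the $\BV$ bound of the previous step is needed. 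Since $F_1-F_2=\rho\,v(\rho)\,(\mathbf{V}[r_1]-\mathbf{V}[r_2])$ and the map $r\mapsto\mathbf{V}[r]$ is Lipschitz from $\L1$ into $\W1\infty$ (again using the Lipschitz character of $z\mapsto z/\sqrt{1+\modulo{z}^2}$ together with $\norma{(r_1-r_2)*D^k\eta}_{\L\infty}\le\norma{r_1-r_2}_{\L1}\norma{D^k\eta}_{\L\infty}$ for $k=1,2$), the right-hand side is bounded by $C\,t\,\sup_{[0,t]}\norma{r_1(s)-r_2(s)}_{\L1}$. Hence on a short interval $[0,T]$ with $CT<1$, $\mathscr{Q}$ is a contraction on $X$, and Banach's fixed point theorem gives a unique fixed point $\rho=\mathscr{Q}(\rho)$, i.e. the unique weak entropy solution of~(\ref{eq:order}) on $[0,T]$. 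Finally, since the $[0,1]$ and $\BV$ bounds remain finite on every finite interval, restarting from $\rho(T)$ covers any $[0,T^*]$ in finitely many steps, extending the solution uniquely to all of $\rpic$; continuity $\rho\in\C0(\rpic,\L1)$ follows from the $\L1$-Lipschitz-in-time estimate built into the Kru\v zkov construction.
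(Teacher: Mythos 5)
Your proposal is correct and follows essentially the same route as the paper: freeze the nonlocal term, apply Kru\v zkov theory to the frozen scalar problem, exploit $v(1)=0$ so that $u\equiv 0$ and $u\equiv 1$ are stationary solutions and the comparison principle yields the $[0,1]$ bound (allowing $X=\C0([0,T],\L1(\reali^N,[0,1]))$), then get the contraction from the flux-stability estimate (Theorem \ref{thm:stab}) combined with the total variation bound (Theorem \ref{teo:tv}) and iterate Banach's fixed point in time. The only cosmetic difference is that you impose the $\BV$ bound in the definition of $X$ itself, whereas the paper leaves $X$ unconstrained in total variation and applies Theorem \ref{teo:tv} to the images $\mathscr{Q}(r)$.
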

The proof of this theorem relies on Kru\v zkov theory (see Section \ref{sec:k}). 
Note that for this model the density is uniformly bounded in $\L\infty$, contrarily to the panic model, in which the $\L\infty$ norm can grow exponentially in time.

Some difficulties now appear in proving that the pedestrians remain in the authorized area. Let us introduce the following invariance property:
\begin{description}
\item[(P)] Let $\Omega\subset \reali^N$ be region where the pedestrians are allowed to walk. The model (\ref{eq:order}) is invariant with respect to $\Omega$ if
\begin{equation}
\supp( \rho_0) \subset \Omega \qquad \Rightarrow \qquad \supp( \rho(t))\subset \reali^N \quad \textrm{ for all }t\geq 0\,.
\end{equation}
\end{description}
To obtain that \textbf{(P)} is satisfied, we have to require the prefered direction to be strongly entering the room (see \cite[Proposition 3.1 \& Appendix A]{ColomboGaravelloMercier}).

Some interesting phenomena show up through numerical computations. First, when considering a crowd walking along a corridor, we observe the formation of lanes (see Figure \ref{fig:lanes}).
\begin{figure}[htpb]
  \centering
  \includegraphics[width=0.32\textwidth, trim=75 120 20
  120]{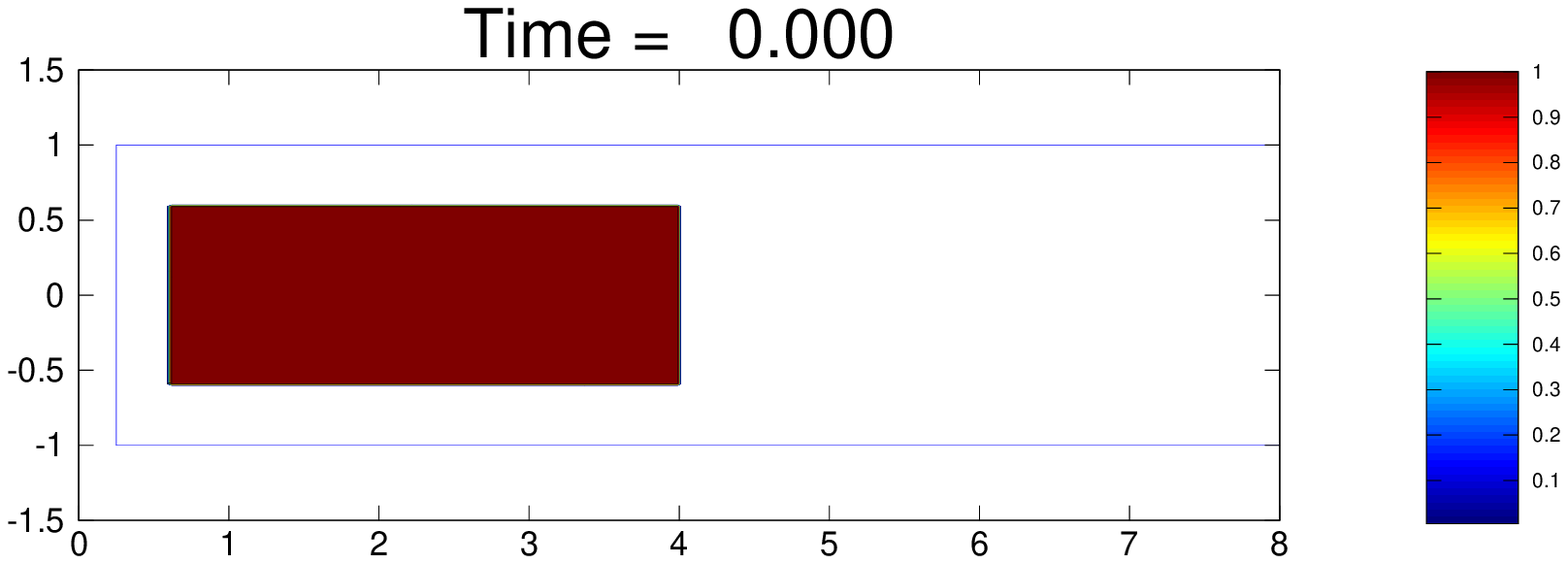}\hfil%
  \includegraphics[width=0.32\textwidth, trim=75 120 20
  120]{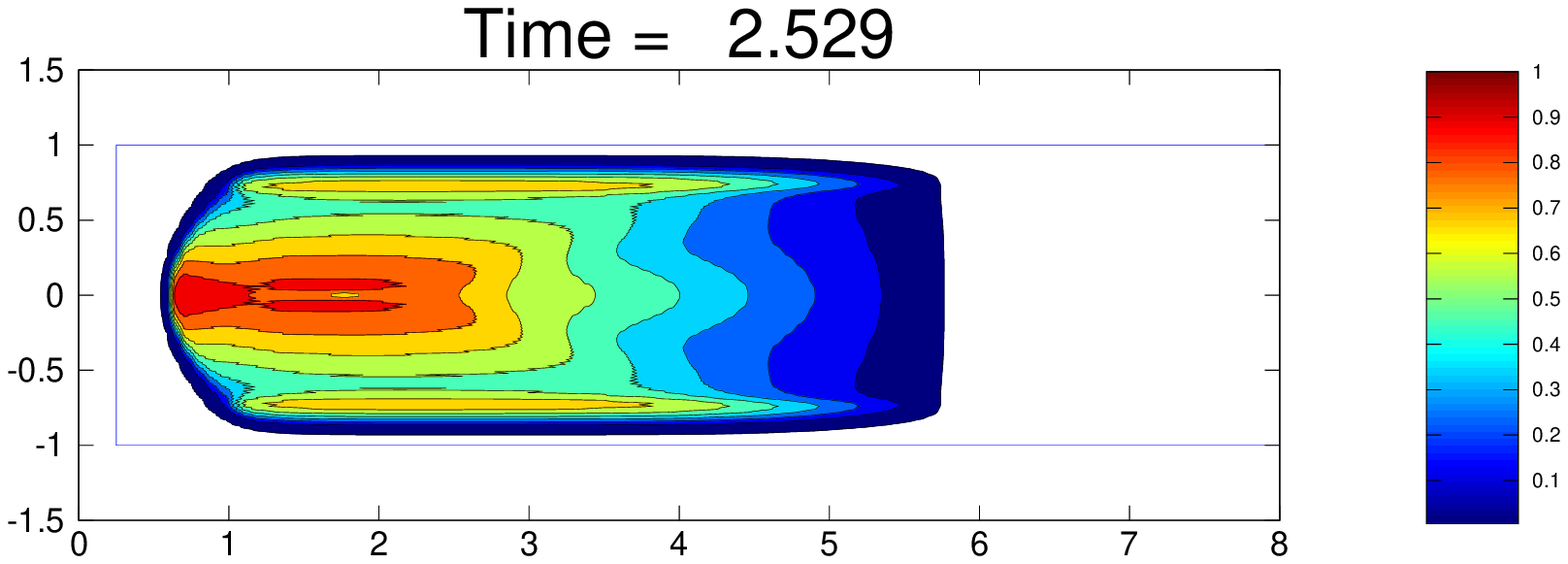}\hfil%
  \includegraphics[width=0.32\textwidth,
  trim=75 120 20 120]{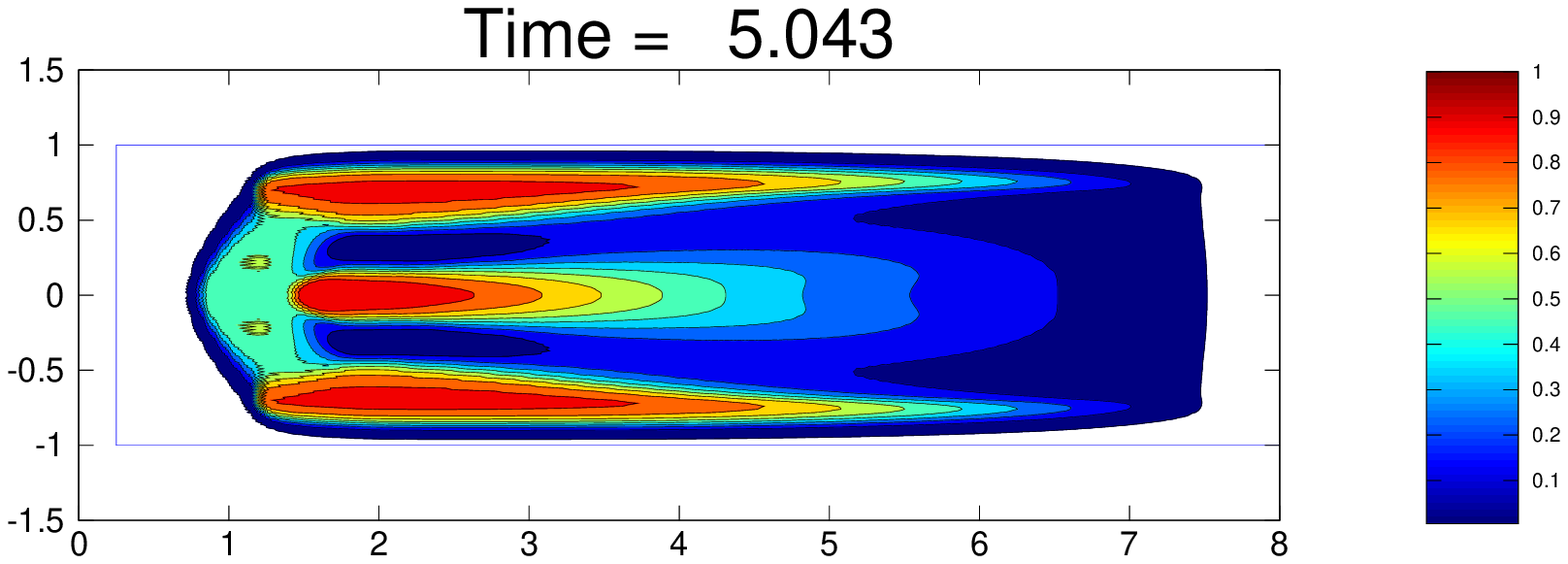}\\
  \includegraphics[width=0.32\textwidth, trim=75 120 20
  120]{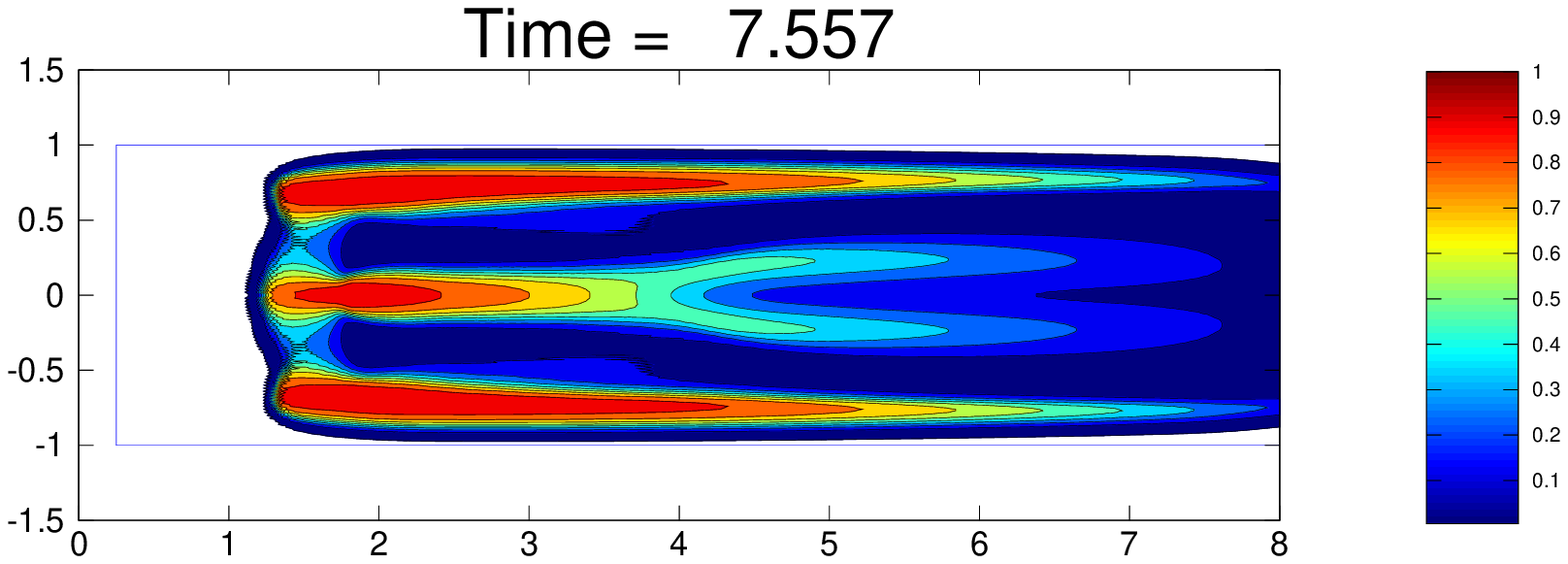}\hfil%
  \includegraphics[width=0.32\textwidth, trim=75 120 20
  120]{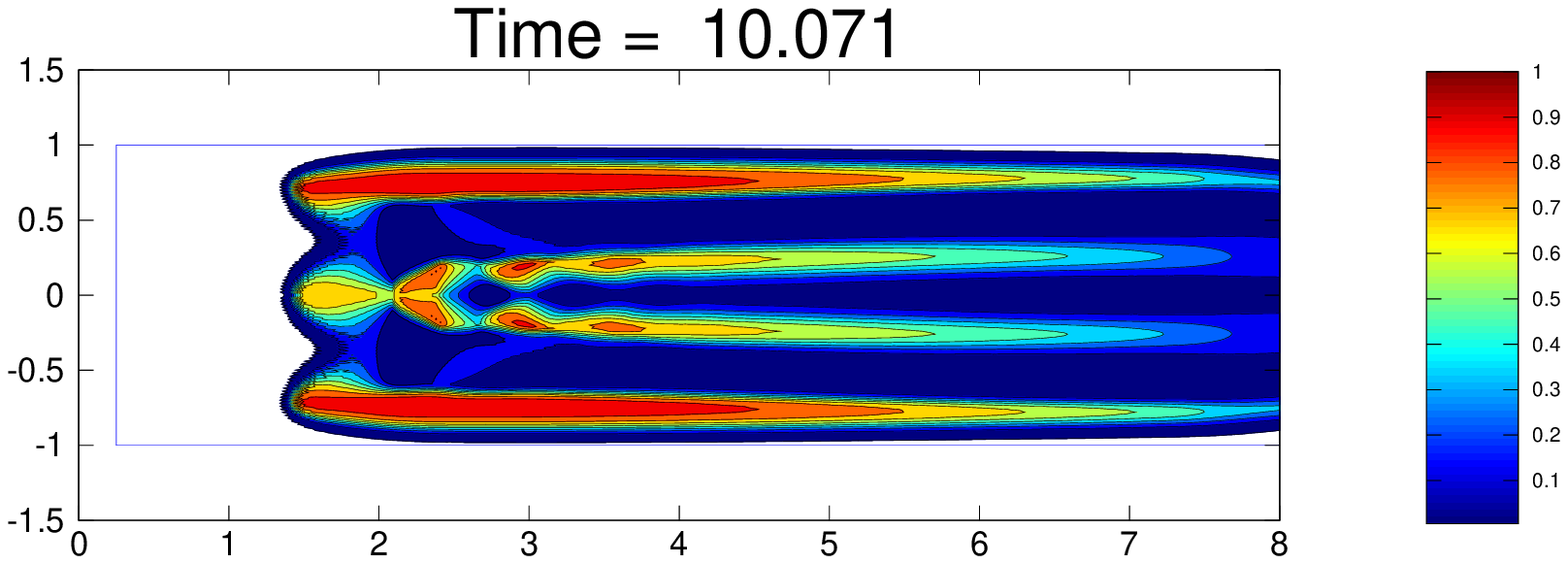}\hfil%
  \includegraphics[width=0.32\textwidth, trim=75 120 20
  120]{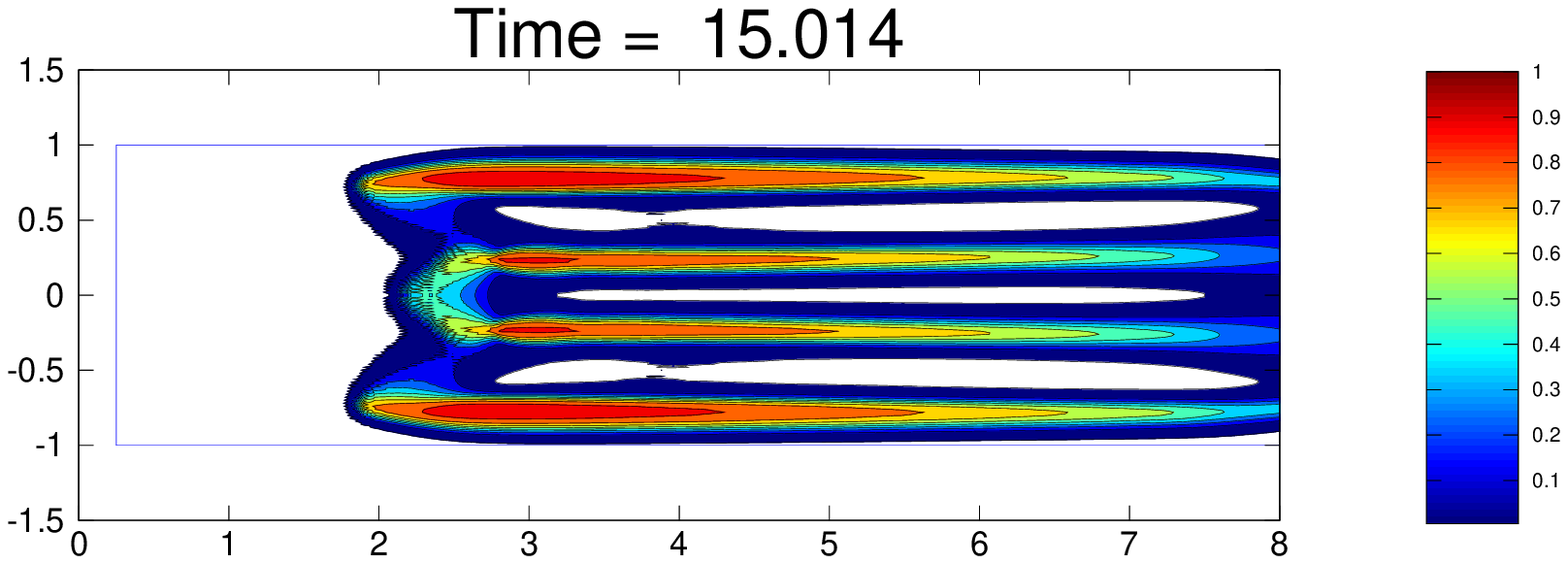}%
  \caption{Solution
    to~\eqref{eq:order}
    at times $t=0$, $2.529$, $5.043$, $7.557$, $10.071,\,
    15.014$. First 3 lanes are formed, then the middle lane bifurcates
    forming the fourth lane. Picture from \cite{ColomboGaravelloMercier}.}
  \label{fig:lanes}
\end{figure}

Furthermore, this phenomenon seems very stable with respect to initial conditions and geometry. Indeed considering the room and initial distribution as in Figure \ref{fig:Initial}, adding some various obstacles, we still have lanes, at least in large space (see Figures \ref{fig:Evacuation1}). 
\begin{figure}[h!]
\begin{center}
    \includegraphics[width=0.3\textwidth, trim=60 120 50
    120]{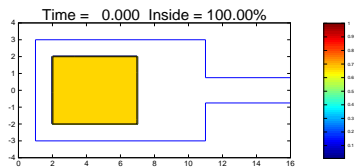}
    \caption{Initial datum and room geometry. Picture from \cite{ColomboGaravelloMercier}.}
    \label{fig:Initial}
    \end{center}
\end{figure}

\begin{figure}[h!]
  \centering
  \includegraphics[width=0.3\textwidth, trim=75 110 10 120]{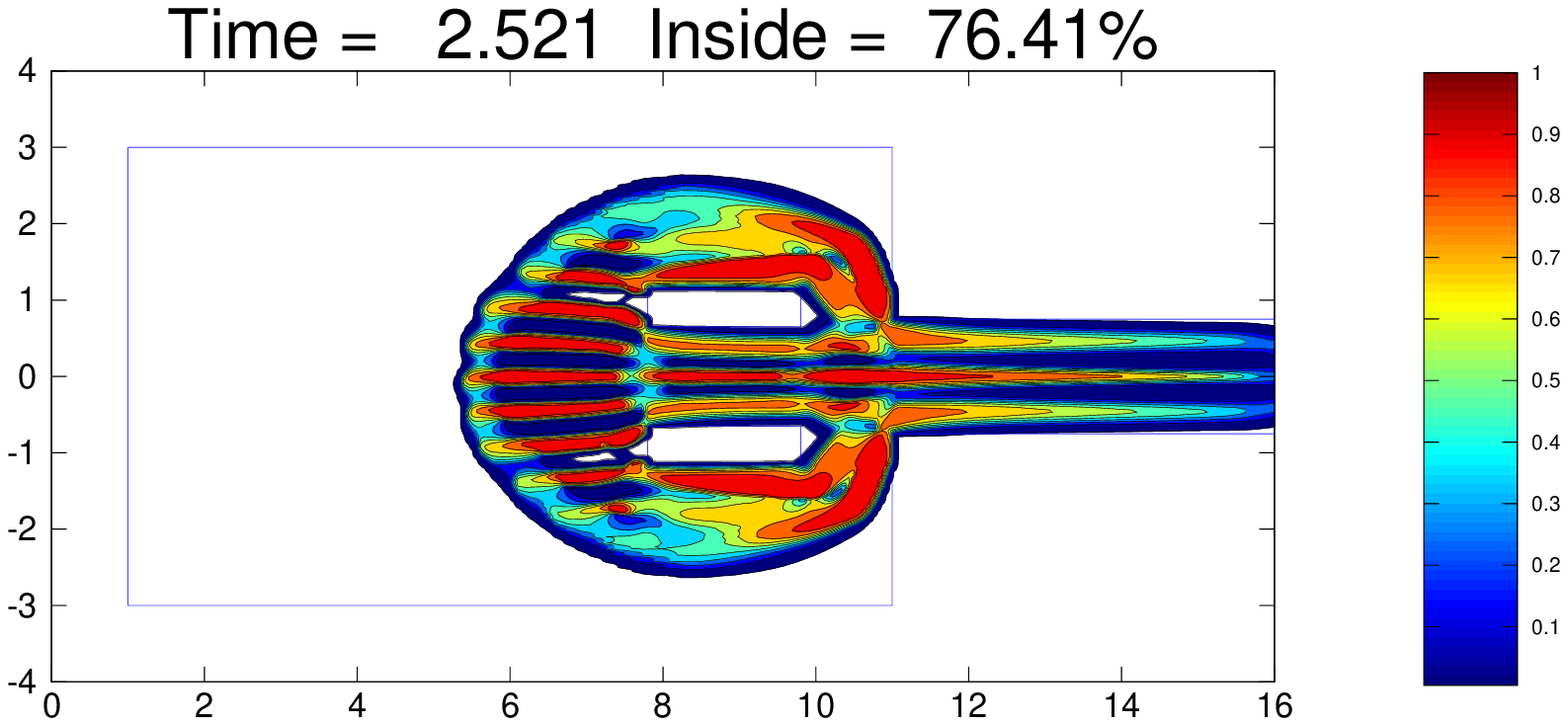}%
  \includegraphics[width=0.3\textwidth, trim=75 110 10 120]{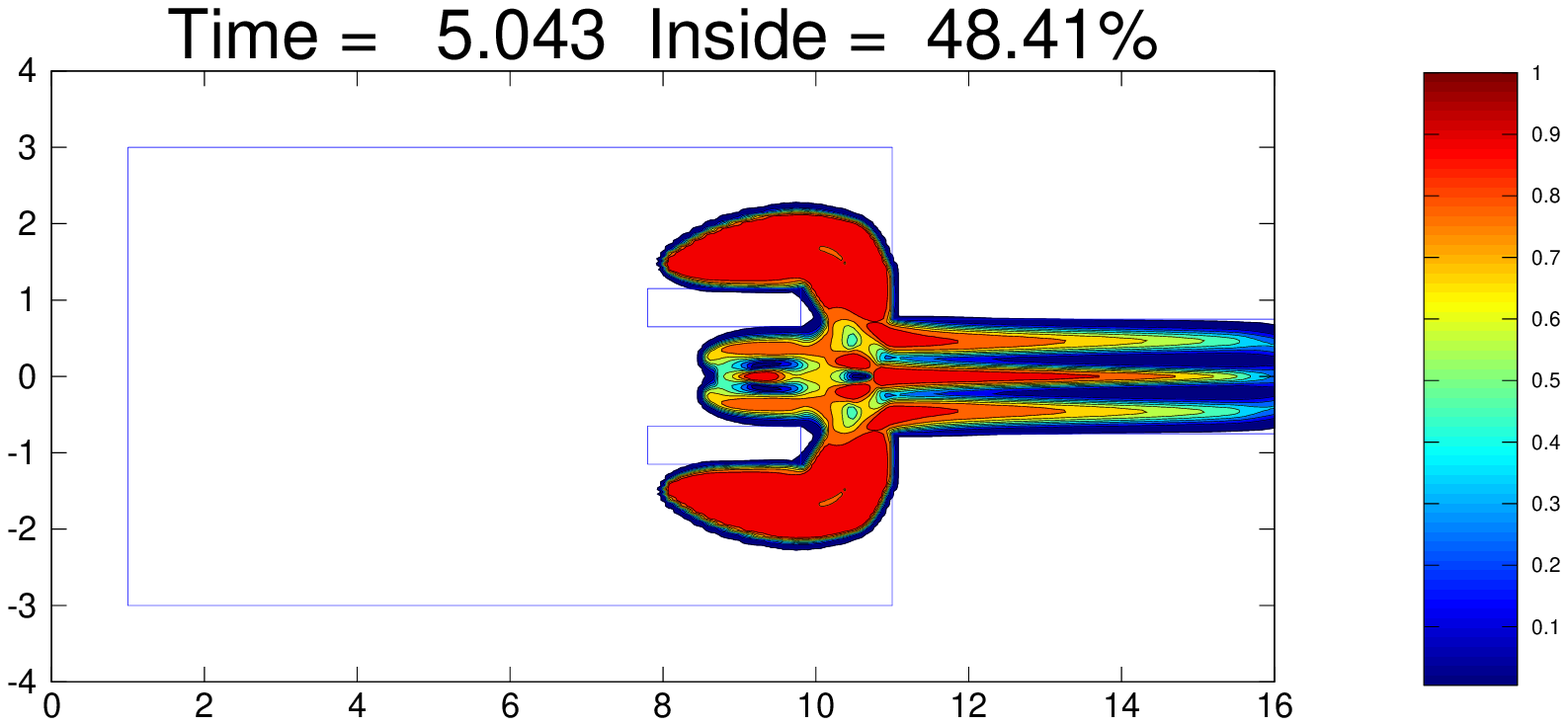}%
  \includegraphics[width=0.3\textwidth, trim=75 110 10 120]{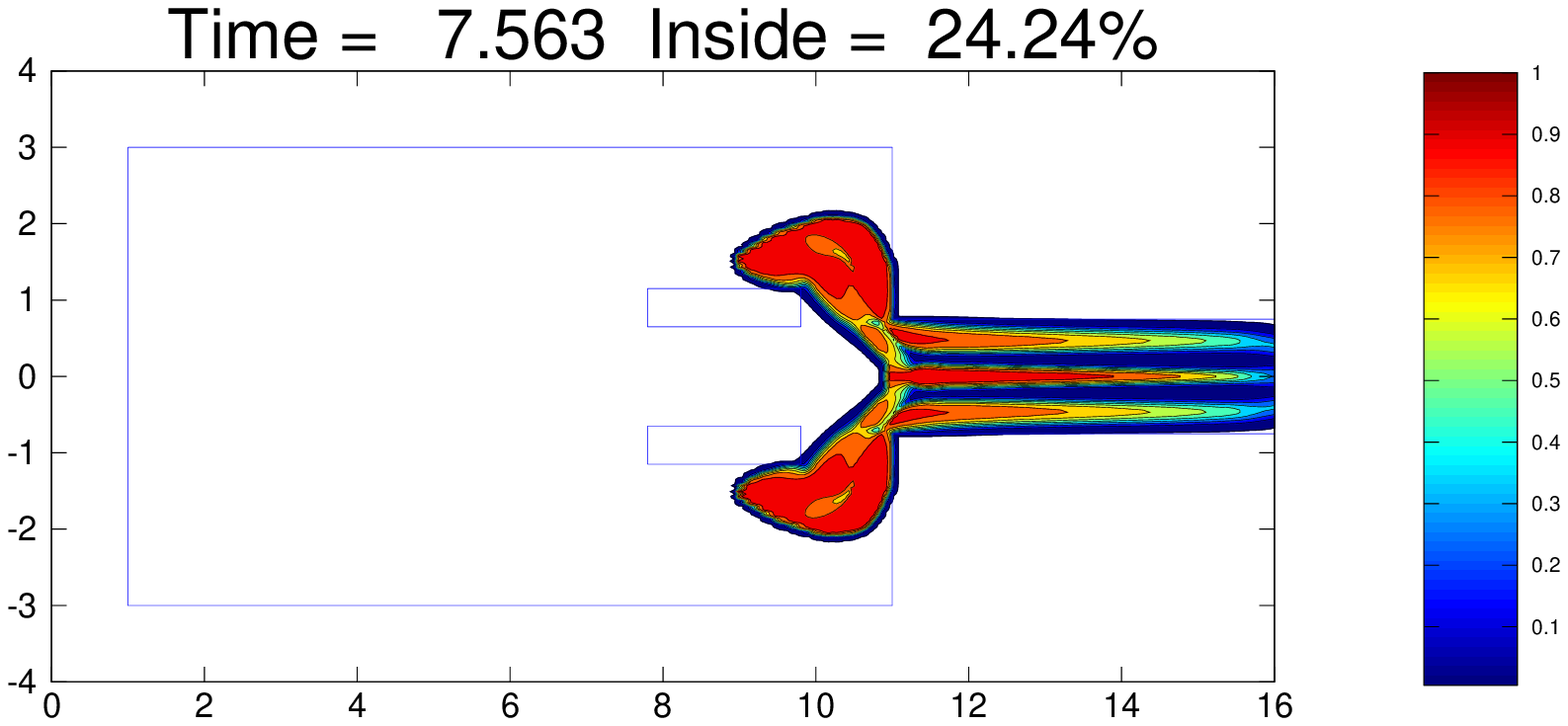}\\[0.7cm]
  \includegraphics[width=0.3\textwidth, trim=75 120 10 110]{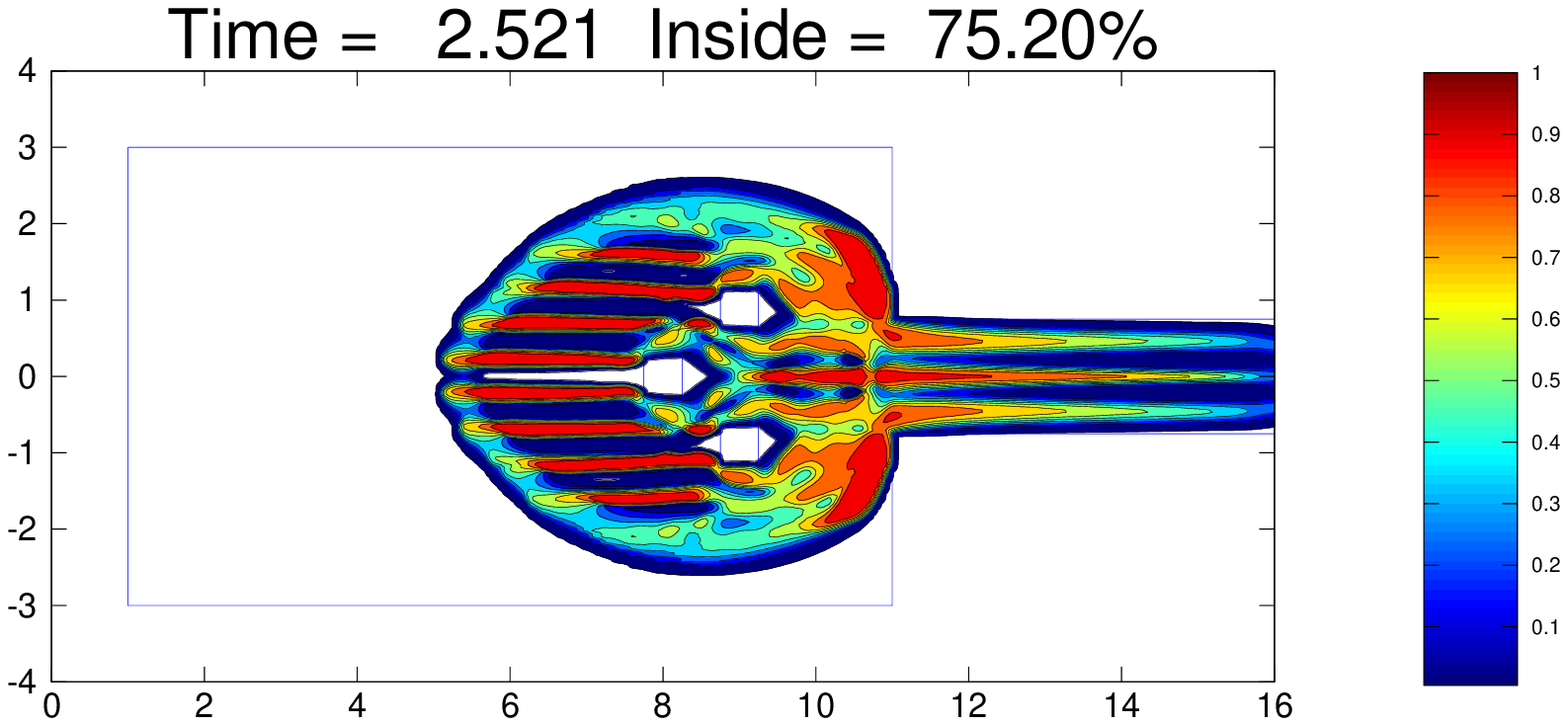}
  \includegraphics[width=0.3\textwidth, trim=75 120 10 110]{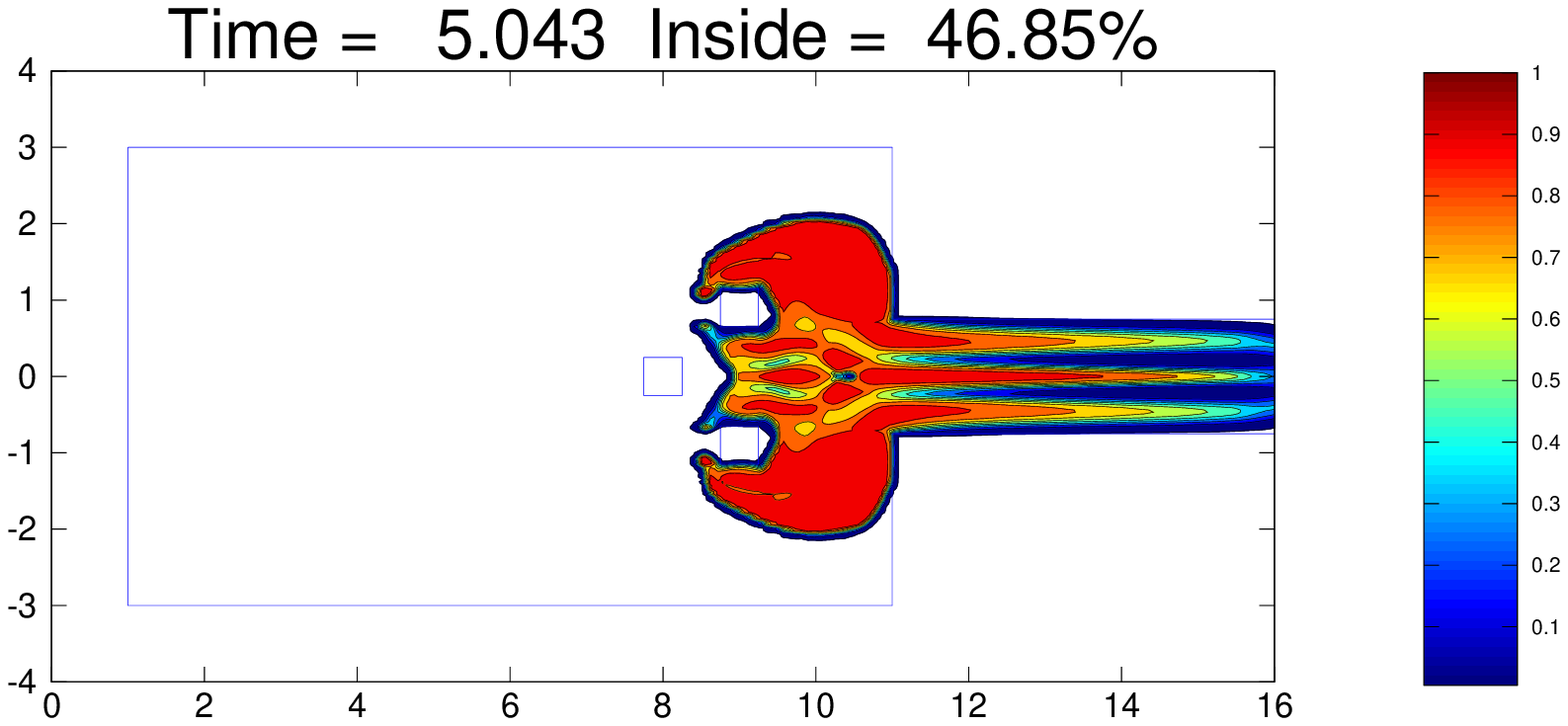}%
  \includegraphics[width=0.3\textwidth, trim=75 120 10 110]{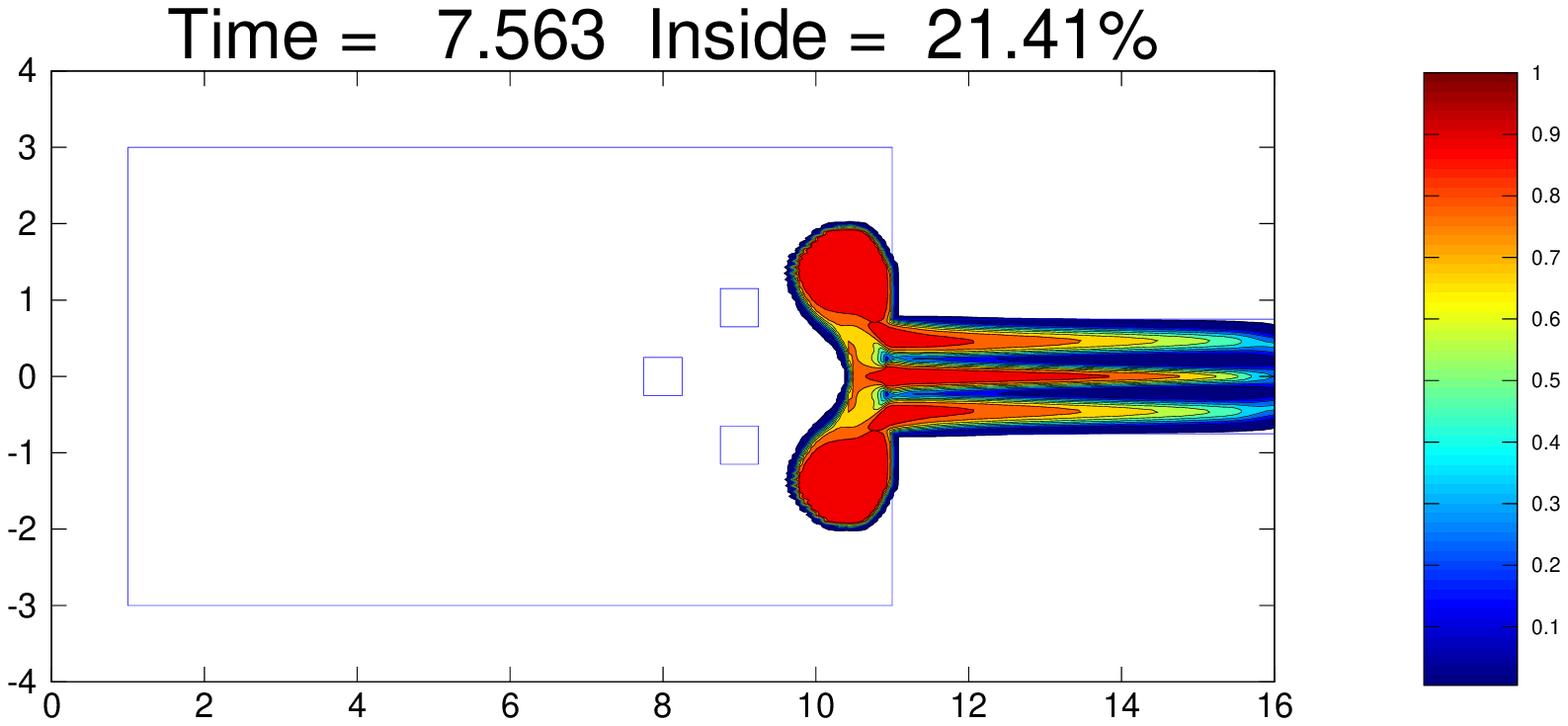}%
  \caption{Solution
 to~(\ref{eq:order})
    with different geometries, computed at time $t = 2.521$, $5.043$
    and $7.563$. Picture from \cite{ColomboGaravelloMercier}.}
  \label{fig:Evacuation1}
\end{figure}


A further remarkable property of the
model (\ref{eq:order}) is that it captures the
following well-known, although sometimes counter intuitive 
phenomenon (Braess paradox): the evacuation time through an exit can be reduced by
carefully positioning suitable \emph{``obstacles''} that direct the
outflow (see for instance~\cite{HelbingJohansson} and the references
therein). Indeed, looking at Figure \ref{fig:braess}, we observe that the time of exist with obstacles is slightly shorter than the one without any obstacle.
\begin{figure}[h!]
  \centering
  \includegraphics[width=0.3\textwidth, trim=75 110 10 120]{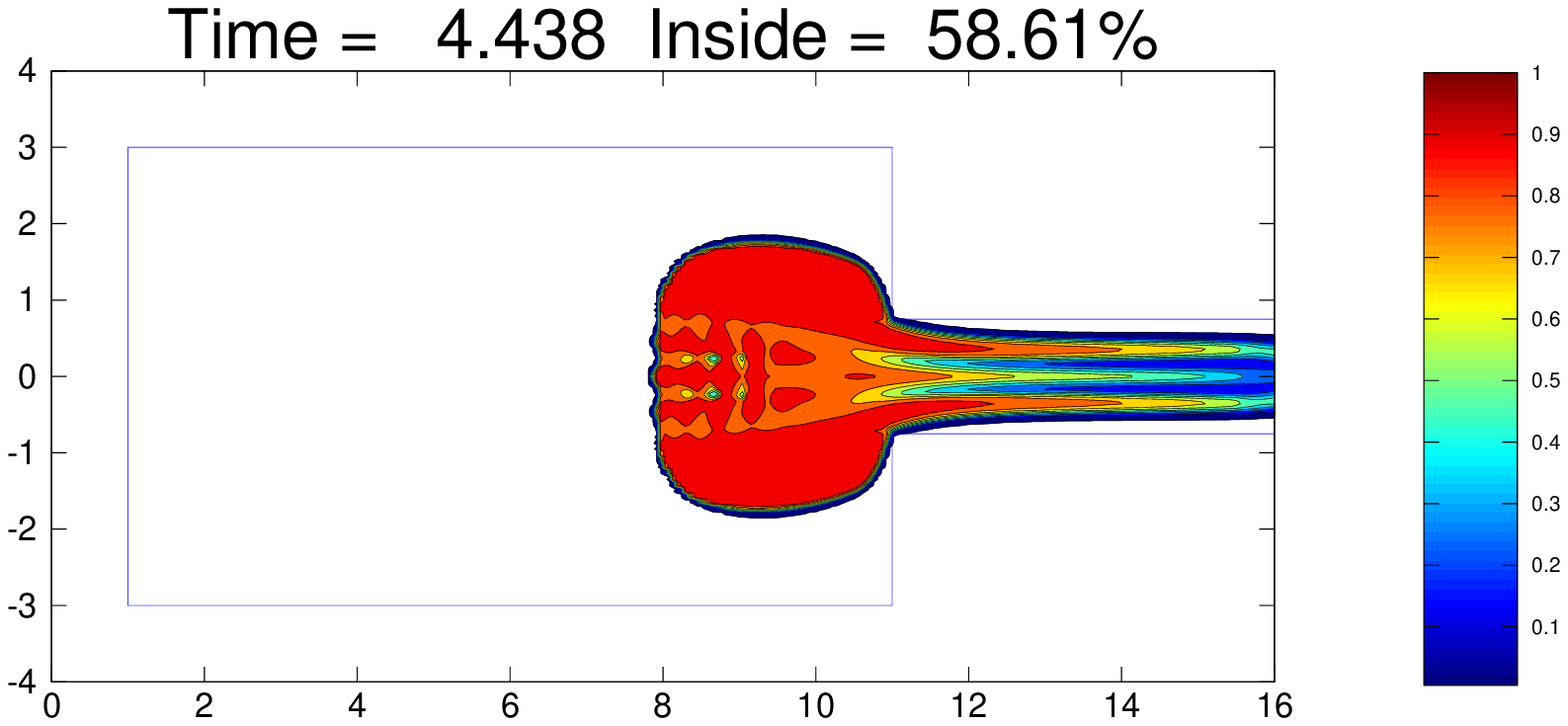}%
  \includegraphics[width=0.3\textwidth, trim=75 110 10 120]{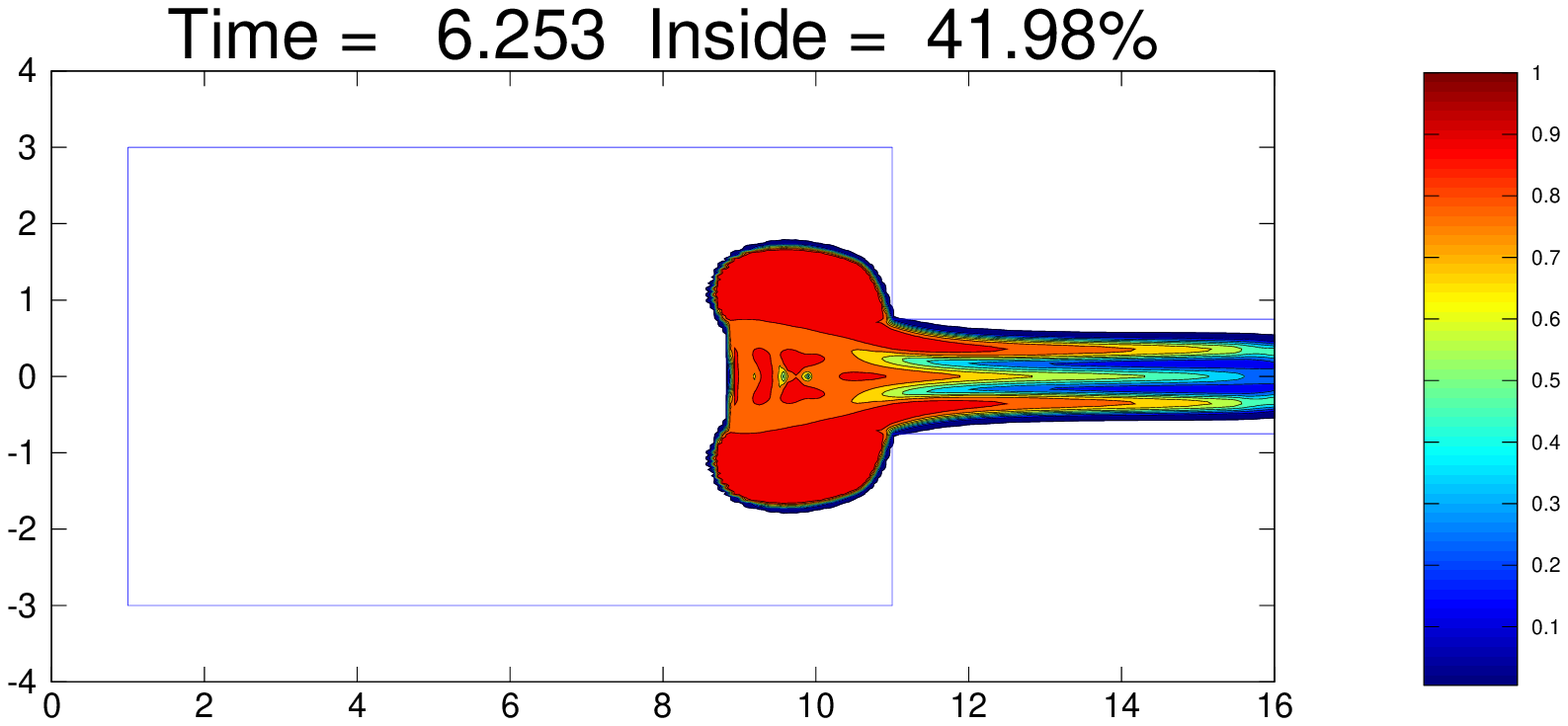}%
  \includegraphics[width=0.3\textwidth, trim=75 110 10 120]{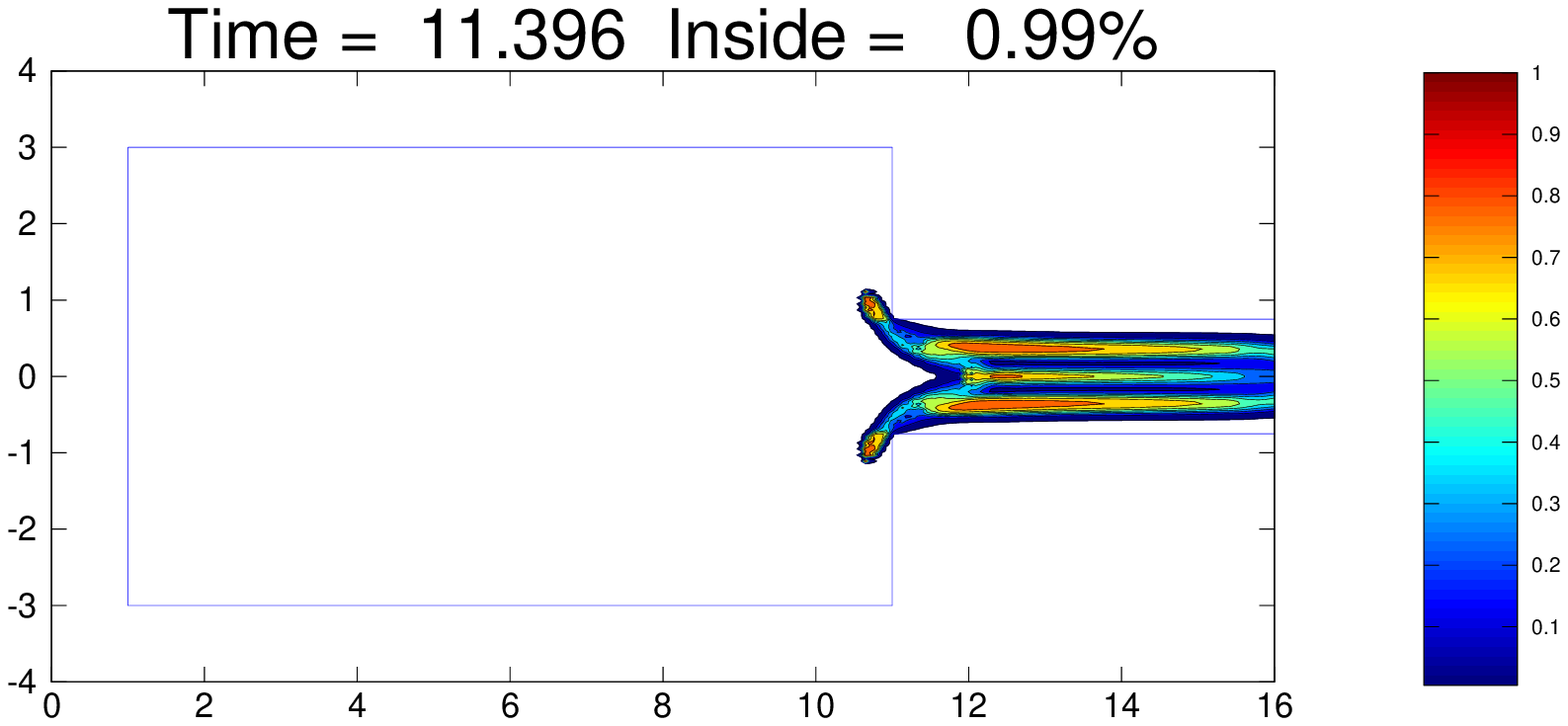}\\[0.7cm]
  \includegraphics[width=0.3\textwidth, trim=75 120 10 110]{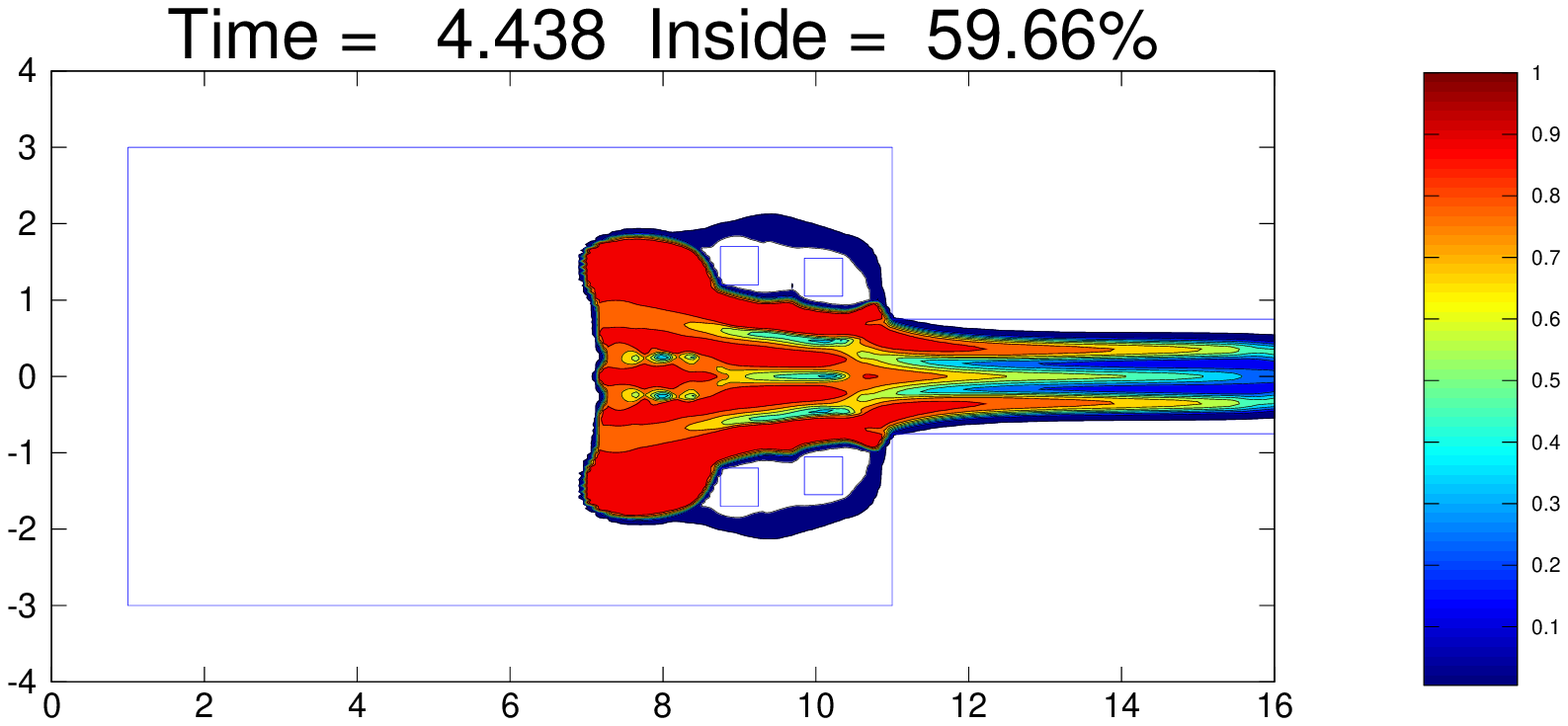}%
  \includegraphics[width=0.3\textwidth, trim=75 120 10 110]{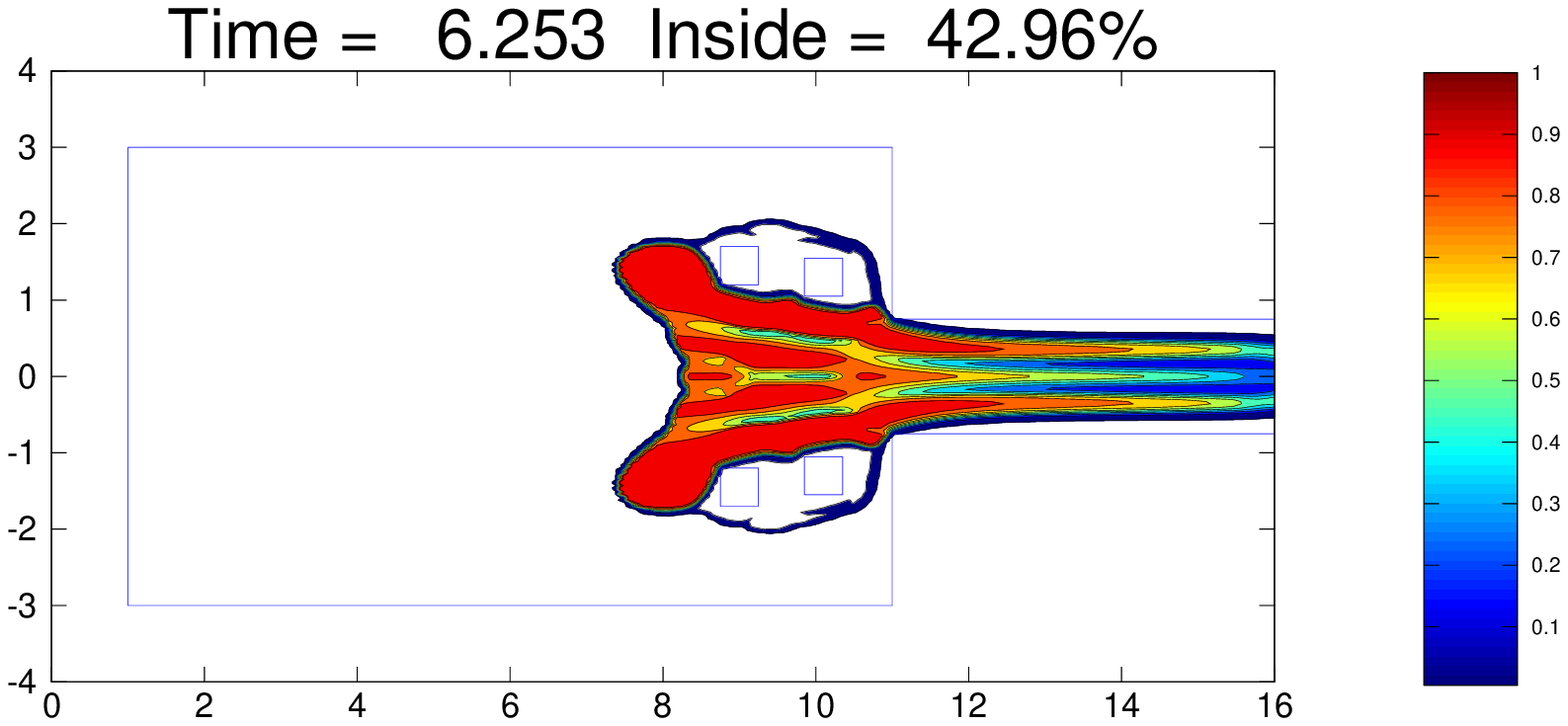}%
  \includegraphics[width=0.3\textwidth, trim=75 120 10 110]{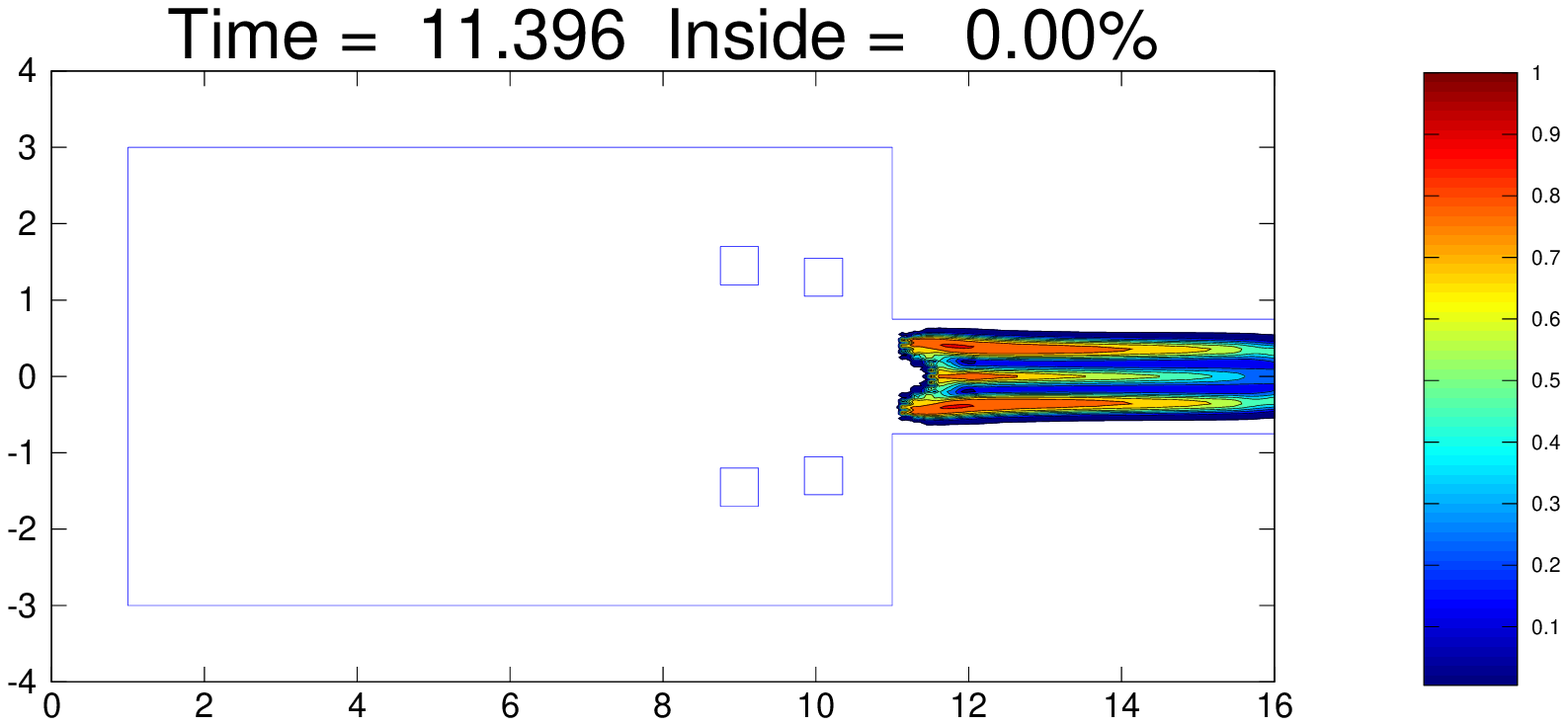}
    \caption{Solution
    to~(\ref{eq:order})
    with $\epsilon=0.2$, at times $t=4.438$, $6.253$, $11.396$. On the
    first line, no obstacle is present. On the second line, $4$
    columns direct the crowd flow. The evacuation time in
    the latter case is \emph{shorter} than in the former one. Picture from \cite{ColomboGaravelloMercier}.}\label{fig:braess}
\end{figure}

\subsection{Several populations}
A natural wish now is to extend the previous models to the case of several population with different objectives.

\subsubsection{Panic}
With several populations, we have to consider several densities and several equations. 
For two populations, extending the idea of the equation (\ref{eq:panic}) to the case of several populations, we obtain the system
\begin{equation}\label{eq:panic2}
\left\{
\begin{array}{l}
\pt_t \rho_1 +\Div\left(\rho_1 \,v(\rho_1*\eta_1+\rho_2*\eta_2)\,\vec \nu_1(x)\right)=0\,,\\
\pt_t \rho_2 +\Div\left(\rho_2 \, v(\rho_1*\eta_1+\rho_2*\eta_2)\,\vec \nu_2(x)\right)=0\,.
\end{array}\right.
\end{equation}
Here we consider that the speed $v$ depends on the  average of the total density $\rho_1+\rho_2$. Furthermore, the difference of goals of the populations 1 and 2 is reflected in the choice of different prefered directions $\vec \nu_1$ and $\vec \nu_2$.

We are able to prove theorems similar to the ones of section \ref{sec:panic}.  In collaboration with R. M. Colombo \cite{ColomboLecureux}, we obtained:
\begin{theorem}[see \cite{ColomboLecureux}]\label{thm:panicK2}
 Let $\rho_0=(\rho_{0,1}, \rho_{0,2})\in (\L1\cap\L\infty\cap\BV)(\reali^N, \rpic^2)$. Assume that, for $i\in \{1,2\}$, $v_i\in (\C2\cap\W2\infty)(\reali, \reali)$, $\vec \nu_i\in (\C2\cap\W21)(\reali^N, \reali^N)$, $\eta_i\in (\C2\cap\W2\infty)(\reali^N, \reali)$. Then there exists a unique weak entropy solution $\rho=S_t\rho_0\in \C0(\rpic, \L1(\reali^N, \rpic^2))$ to (\ref{eq:panic}) with initial condition $\rho_0$. 
Furthermore we have the estimate
\begin{equation}
\norma{\rho(t)}_{\L\infty}\leq \norma{\rho_0}_{\L\infty}e^{Ct}\,,
\end{equation}
where the constant $C$ depends on $v$, $\vec \nu$ and $\eta$
\end{theorem}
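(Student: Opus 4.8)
The plan is to follow the fixed-point strategy (a)--(c) described in Section~\ref{sec:intro}, exactly as for the single-population Theorem~\ref{thm:panicK}; the only novelty is the coupling between the two equations of~(\ref{eq:panic2}) through the common argument $\rho_1*\eta_1+\rho_2*\eta_2$ of the speed $v$.

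First I would freeze the nonlocal term. Given a pair $r=(r_1,r_2)$ in a suitable space, set
\begin{equation*}
w_i(t,x)=v\bigl(r_1*\eta_1+r_2*\eta_2\bigr)(t,x)\,\vec\nu_i(x)\,,\qquad i\in\{1,2\}\,,
\end{equation*}
and consider the two \emph{decoupled} scalar Cauchy problems $\pt_t\rho_i+\Div(\rho_i\,w_i)=0$ with $\rho_i(0,\cdot)=\rho_{0,i}$. Because $v\in\C2\cap\W2\infty$, $\eta_i\in\C2\cap\W2\infty$ and $\vec\nu_i\in\C2\cap\W21$, each $w_i$ is $\C1$ in $x$ and Lipschitz, so the flux $f_i(t,x,\rho_i)=\rho_i\,w_i(t,x)$ satisfies the regularity hypotheses of Kru\v zkov's theorem. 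Hence for each fixed $r$ there is a unique weak entropy solution $\rho_i\in\C0(\rpic,(\L1\cap\L\infty\cap\BV)(\reali^N))$; this realizes condition (b) and defines the map $\mathscr{Q}:r\mapsto\rho=(\rho_1,\rho_2)$.

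Next I would work in $X=\C0([0,T],(\L1\cap\L\infty\cap\BV)(\reali^N,\rpic^2))$ equipped with the $\L1$ distance (condition (a)), on a fixed closed ball chosen large enough to be invariant under $\mathscr{Q}$. The decisive step is the contraction estimate (c): comparing the solutions $\rho$ and $\tilde\rho$ generated by two frozen terms $r$ and $\tilde r$, the Kru\v zkov stability estimate with respect to the difference of the fluxes gives a bound of the form
\begin{equation*}
\norma{\rho(t)-\tilde\rho(t)}_{\L1}\le C\,t\,e^{Ct}\,\sup_{[0,t]}\norma{r-\tilde r}_{\L1}\,,
\end{equation*}
where $C$ depends on the uniform $\L\infty$ and $\BV$ bounds of the solutions and on the norms of $v,\vec\nu_i,\eta_i$. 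The coupling only replaces the single-density argument of $v$ by the sum $r_1*\eta_1+r_2*\eta_2$, and since $\norma{(r_i-\tilde r_i)*\eta_i}_{\L\infty}\le\norma{\eta_i}_{\L\infty}\norma{r_i-\tilde r_i}_{\L1}$ the difference is controlled componentwise. Choosing $T$ small enough (or, equivalently, using a time-weighted norm) makes $\mathscr{Q}$ a contraction; the Banach fixed-point theorem then gives a unique solution on $[0,T]$, and the construction is iterated over successive intervals to cover all of $\rpic$.

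The main obstacle is securing \emph{uniform} a priori $\L\infty$ and $\BV$ bounds independent of $r$, so that $\mathscr{Q}$ maps a fixed ball into itself and the Lipschitz constant above stays controlled. The $\L\infty$ bound is read off the characteristics: since $\frac{d}{dt}\rho_i=-\rho_i\,\div w_i$ along the flow of $w_i$, one obtains $\norma{\rho_i(t)}_{\L\infty}\le\norma{\rho_{0,i}}_{\L\infty}\exp(\int_0^t\norma{\div w_i(s)}_{\L\infty}\,\d s)$, and
\begin{equation*}
\div w_i=v'(r_1*\eta_1+r_2*\eta_2)\,\nabla(r_1*\eta_1+r_2*\eta_2)\cdot\vec\nu_i+v\,\div\vec\nu_i
\end{equation*}
is bounded uniformly through the assumed norms, which yields the claimed estimate $\norma{\rho(t)}_{\L\infty}\le\norma{\rho_0}_{\L\infty}e^{Ct}$. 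The $\BV$ bound is the more delicate one: it is obtained by differentiating the equation and running a Gronwall argument, and it is precisely here that the $\W21$ regularity of $\vec\nu_i$ together with the $\W2\infty$ regularity of $v$ and $\eta_i$ are needed to close the estimate.
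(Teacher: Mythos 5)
Your proposal is correct and follows essentially the same route as the paper: freeze the nonlocal term, solve the resulting decoupled scalar problems by Kru\v zkov's theorem, obtain contraction of the map $\mathscr{Q}$ from the flux-stability estimate (Theorem~\ref{thm:stab}) together with the total variation bound (Theorem~\ref{teo:tv}), and apply the Banach fixed point theorem on a small time interval, iterating to get a global solution. The only cosmetic difference is that you carry the $\L\infty$ and $\BV$ bounds inside the space (an invariant ball in $\C0([0,T],(\L1\cap\L\infty\cap\BV)(\reali^N,\rpic^2))$), whereas the paper works simply in $X=\C0([0,T],\L1(\reali^N,\rpic^2))$ and recovers those bounds for each frozen problem through Theorems~\ref{thm:stab} and~\ref{teo:tv}.
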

Similarly as for one population, it is possible with several populations to prove the Gâteaux-differentiability thanks to Kru\v zkov theory (see \cite[Theorem 2.2]{ColomboLecureux}).

As in Theorem \ref{thm:panicK2},  the hypotheses  of Theorem \ref{thm:panicK2} are very strong. In collaboration with G. Crippa \cite{CrippaMercier}, using  tools from optimal transport theory, we obtained the better result:
\begin{theorem}[see \cite{CrippaMercier}]\label{thm:panicOT2}
Let $\rho_0\in \mathcal{P}(\reali^N)^2$. Assume $v_i\in (\L\infty\cap\lip)(\reali, \reali)$, $\vec \nu_i\in (\L\infty\cap\lip)(\reali^N, \reali^N)$, $\eta_i\in (\L\infty\cap\lip)(\reali^N, \rpic)$. Then there exists a unique weak measure solution $\rho\in \L\infty(\rpic, \mathcal{P}(\reali^N)^2)$ to (\ref{eq:panic2}) with initial condition $\rho_0$. 

If furthermore $\rho_0\in \L1(\reali^N, \rpic)$ then for all $t\geq 0$, we have $\rho(t)\in \L1(\reali^N, \rpic)$.
\end{theorem}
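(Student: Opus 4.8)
The plan is to prove Theorem \ref{thm:panicOT2} by the fixed-point scheme sketched in the introduction, adapting the single-population argument of Theorem \ref{thm:panicOT} to the coupled system (\ref{eq:panic2}). The crucial structural observation is that the coupling between the two equations occurs \emph{only through the scalar nonlocal argument} $a(t,x) := (\rho_1 * \eta_1 + \rho_2 * \eta_2)(t,x)$. Therefore, if we freeze a candidate pair $r = (r_1, r_2)$ and set $a_r := r_1 * \eta_1 + r_2 * \eta_2$, each component of (\ref{eq:panic2}) decouples into a \emph{linear} continuity equation
\begin{equation*}
\pt_t \rho_i + \Div\bigl(\rho_i \, v_i(a_r(t,x)) \, \vec\nu_i(x)\bigr) = 0\,, \qquad \rho_i(0) = \rho_{0,i}\,,
\end{equation*}
with a prescribed, time-dependent velocity field $w_i(t,x) := v_i(a_r(t,x))\,\vec\nu_i(x)$. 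First I would verify that under the stated hypotheses ($v_i, \vec\nu_i, \eta_i$ all bounded and Lipschitz), each $w_i$ is bounded and Lipschitz in $x$, uniformly in $t$: indeed $\vec\nu_i$ is Lipschitz by assumption, and $x \mapsto v_i(a_r(t,x))$ is Lipschitz because $v_i$ is Lipschitz and $a_r$ inherits a spatial Lipschitz bound from $\eta_i \in \Lip$ via the convolution (controlling $\nabla(\rho_i * \eta_i) = \rho_i * \nabla\eta_i$, or rather using $\norma{a_r(t,\cdot+h)-a_r(t,\cdot)} \le \sum_i \norma{\rho_i(t)}_{\M{}} \lip(\eta_i)\,|h|$). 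This regularity makes each $w_i$ an admissible velocity for the superposition principle / method of characteristics in the measure-transport setting, so the solution operator $\rho_i = \mathscr{Q}_i(r)$ is the push-forward of $\rho_{0,i}$ along the flow of $w_i$, which establishes conditions \textbf{(a)} and \textbf{(b)}: existence and uniqueness of a weak measure solution in $\L\infty(\rpic, \mathcal{P}(\reali^N)^2)$ for the frozen problem, with conservation of total mass guaranteeing we stay in $\mathcal{P}(\reali^N)^2$.

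The main work is condition \textbf{(c)}: showing that on the product space $X = \C0([0,T], \mathcal{P}(\reali^N)^2)$, equipped with the (sup-in-time of the sum-of-components) Wasserstein-1 distance $d(r, \tilde r) := \sup_{t \le T} \bigl(W_1(r_1(t), \tilde r_1(t)) + W_1(r_2(t), \tilde r_2(t))\bigr)$, the map $\mathscr{Q} = (\mathscr{Q}_1, \mathscr{Q}_2)$ is a contraction for $T$ small enough. The estimate I would run has two nested layers. Fix two candidates $r$ and $\tilde r$. For the stability of a single linear continuity equation under perturbation of a Lipschitz velocity field, the standard Wasserstein estimate gives, for each component,
\begin{equation*}
W_1\bigl(\rho_i(t), \tilde\rho_i(t)\bigr) \le e^{L_i t} \int_0^t \norma{w_i(s,\cdot) - \tilde w_i(s,\cdot)}_{\L\infty}\, \d{s}\,,
\end{equation*}
where $L_i = \lip_x(\vec\nu_i)\,\norma{v_i}_{\L\infty}$ (after accounting also for the Lipschitz dependence coming through $a$) controls the exponential spreading of characteristics. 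The velocity discrepancy is bounded by $\norma{w_i(s) - \tilde w_i(s)}_{\L\infty} \le \norma{\vec\nu_i}_{\L\infty}\,\lip(v_i)\,\norma{a_r(s) - a_{\tilde r}(s)}_{\L\infty}$, and the key Wasserstein-to-sup-norm trick is that $\norma{a_r(s) - a_{\tilde r}(s)}_{\L\infty} = \norma{\sum_i (r_i(s) - \tilde r_i(s)) * \eta_i}_{\L\infty} \le \sum_i \lip(\eta_i)\, W_1(r_i(s), \tilde r_i(s))$, using the Kantorovich--Rubinstein dual characterization of $W_1$ (pairing the signed measure against the Lipschitz test function $\eta_i(x - \cdot)$). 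Chaining these bounds and summing over $i$ yields $d(\mathscr{Q}(r), \mathscr{Q}(\tilde r)) \le C\, T\, e^{CT}\, d(r, \tilde r)$, which is a contraction for $T$ small. Banach's fixed point theorem then produces a unique solution on $[0,T]$, and since the contraction constant depends only on the fixed data (the bounds and Lipschitz constants of $v_i, \vec\nu_i, \eta_i$) and not on the size of the measures — total mass being conserved at $1$ — the argument iterates over successive intervals of equal length to give the global solution on $\rpic$.

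The step I expect to be the genuine obstacle is justifying the single-equation Wasserstein stability estimate \emph{rigorously at the level of measure solutions}, rather than merely for smooth characteristic flows. The clean $e^{L_i t}\int_0^t \norma{w_i - \tilde w_i}$ bound is transparent when $\rho_i$ is transported by a well-defined Lipschitz flow, but one must confirm that the weak measure solution of the frozen linear equation indeed coincides with the push-forward of $\rho_{0,i}$ along that flow (uniqueness in the measure class), and that the Wasserstein contraction estimate survives the mere boundedness — not smoothness — of the data. Here I would lean on the well-posedness and stability theory for continuity equations with Lipschitz (in space) velocity fields, coupling the two solutions via the same optimal transport plan at the initial time and differentiating $W_1$ along the two flows; the Lipschitz regularity of $w_i$ in $x$ is exactly what is needed to close this Gr\"onwall argument. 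The remaining verifications — that mass is conserved so the solution stays in $\mathcal{P}(\reali^N)^2$, and the final assertion that $\L1$ initial data yield $\L1$ solutions (which follows because the push-forward of an absolutely continuous measure under a bi-Lipschitz flow remains absolutely continuous, with the density bound propagating through the Jacobian of the flow) — are routine once the contraction machinery is in place.
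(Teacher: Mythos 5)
Your proposal is correct and follows essentially the same route as the paper: freeze the nonlocal term so the system decouples into linear continuity equations with bounded Lipschitz velocities, solve by push-forward along the ODE flow, prove contraction on $\L\infty([0,T],\mathcal{P}(\reali^N)^2)$ with the summed Wasserstein-1 metric via coupled flows, Gr\"onwall, and the Kantorovich--Rubinstein bound $\norma{(r_i-\tilde r_i)*\eta_i}_{\L\infty}\leq \lip(\eta_i)\,W_1(r_i,\tilde r_i)$, then iterate in time. The ``genuine obstacle'' you flag --- that weak measure solutions of the frozen linear equation coincide with the push-forward (Lagrangian) solution --- is exactly what the paper isolates as Proposition \ref{prop:uni} and settles by a duality argument (solving the adjoint transport equation to test the difference of two solutions), so your reliance on the standard uniqueness theory for Lipschitz velocity fields is precisely the paper's own resolution.
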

The idea of the proof for this theorem is given in Section \ref{sec:ot}.

\textbf{Interaction continuum / individuals.}
Note that in the framework of Theorem \ref{thm:panicOT2}, we are dealing with measure solutions. This context allows us to describe a coupling between a group of density $\rho_1$ and an individual located in $p(t)$. Indeed, let us assume that  $\rho_1\in \L\infty(\rpic, \L1(\reali^N, \rpic))$ and 
 $\rho_2=\delta_{p(t)}$ is a Dirac measure.
Then we have the following ODE/PDE coupling:
\[
\left\{
\begin{array}{l}
 \pt_t \rho_1+\div \left(\rho_1 \, v\left(   \rho_1*\eta_1(x)+ \eta_2(x-p(t)) \right)\vec \nu_1(x) \right)=0\,,\\[5pt]
 \dot p(t)=v\left(\rho_1*\eta_1\left(p(t)\right)+ \eta_2(0)\right) \vec \nu_2(p(t))\,.\\
\end{array}
\right.
\]

\subsubsection{Orderly crowd}
We now extend (\ref{eq:order}) to the case of several populations. 
We consider here not only that  $\rho_1$ and $\rho_2$ have two different goals, but also that $\rho_1$ is repelled by $\rho_2$, and that $\rho_2$ is repelled by $\rho_1$. Let us denote $\epsilon_i$ the parameter of self-interaction and $\epsilon_o$ the parameter of interaction with the other population. We obtain:
\begin{equation}\label{eq:order2}
  \left\{
    \begin{array}{rcl}
   \displaystyle   \pt_t\rho_1
      +
      \Div \big(
        \rho_1 \, v_1(\rho_1)
        \big(\vec \nu_1(x)- \epsilon_i \frac{\nabla\rho_1*\eta_1}{\sqrt{1+\norma{\nabla \rho_1*\eta_1}^2}}- \epsilon_o \frac{\nabla\rho_2*\eta_2} {\sqrt{1+\norma{\nabla \rho_2*\eta_2}^2}}
        \big)
      \big)
      & = & 0\,,
      \\[0.7cm]
 \displaystyle     \pt_t\rho_2
      +
      \Div \big(
        \rho_2 \, v_2(\rho_2)
        \big(\vec \nu_2(x)-\epsilon_o \frac{\nabla\rho_1*\eta_1}{\sqrt{1+\norma{\nabla \rho_1*\eta_1}^2}}- \epsilon_i \frac{\nabla\rho_2*\eta_2} {\sqrt{1+\norma{\nabla \rho_2*\eta_2}^2}}\big)
      \big)
      & = & 0\,.
    \end{array}
  \right.
\end{equation}

\begin{remark}
Note that the interaction  between the equations is only in the nonlocal term.
\end{remark}

We obtain again existence and uniqueness of weak entropy solutions thanks to Kru\v zkov theory (see \cite[Theorem 3.2]{ColomboLecureux}). The obtained theorem is similar to Theorem \ref{thm:order} so we don't rewrite it.

A classical situation considered in the engineering literature is that of two groups of people 
moving in opposite directions and crossing each other. The numerical integration (see Figure \ref{fig:NI1}), shows formation of lanes that are not superimposing as described in the engineering literature (\cite{HelbingEtAlii2001, DaamenHoogendoorn2003} or~\cite{DegondEtAl} ). 
\begin{figure}[htpb]
  \centering%
  \includegraphics[width=0.25\textwidth, trim=120 40 121 30]{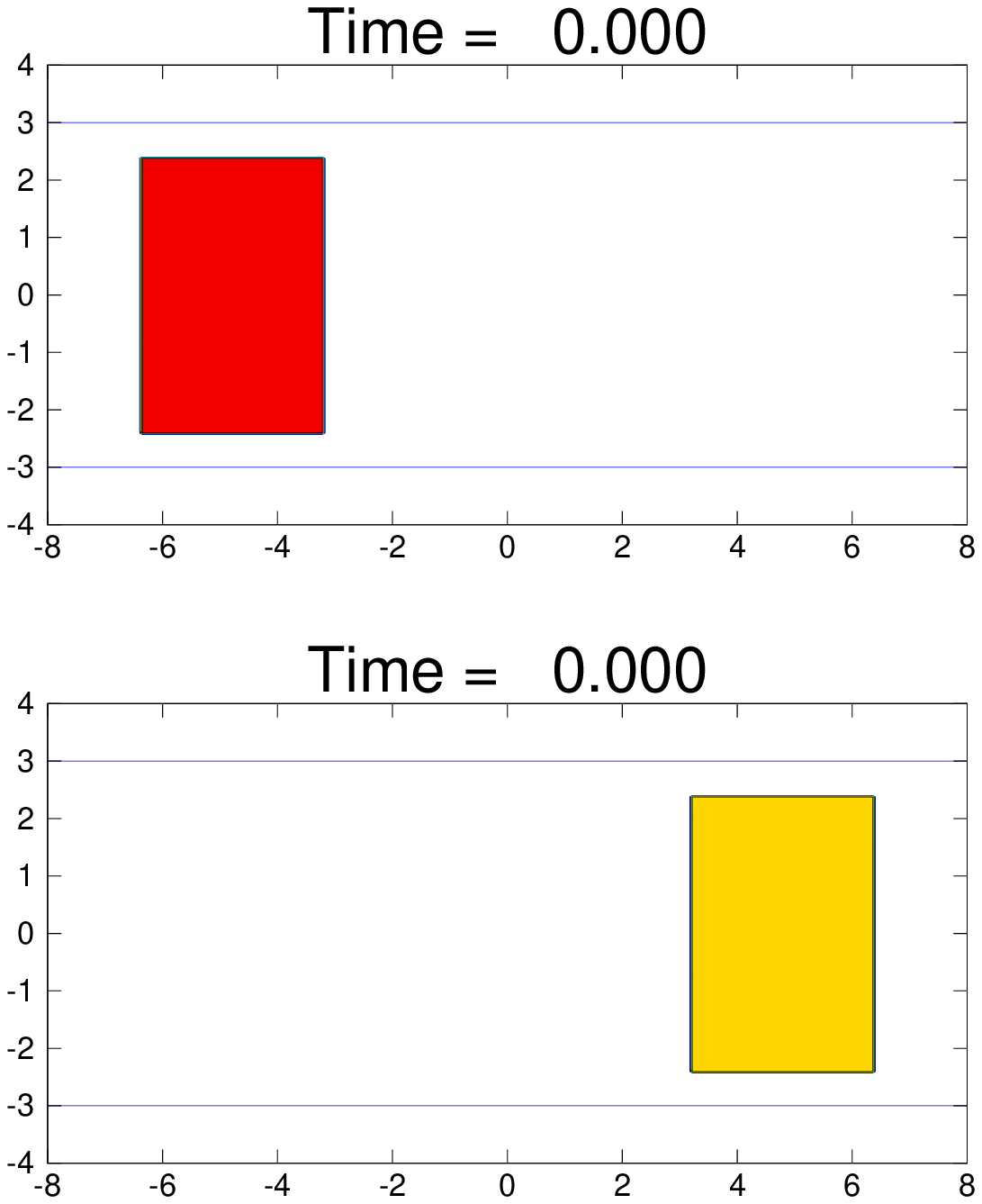}%
  \includegraphics[width=0.25\textwidth, trim=120 40 121 30]{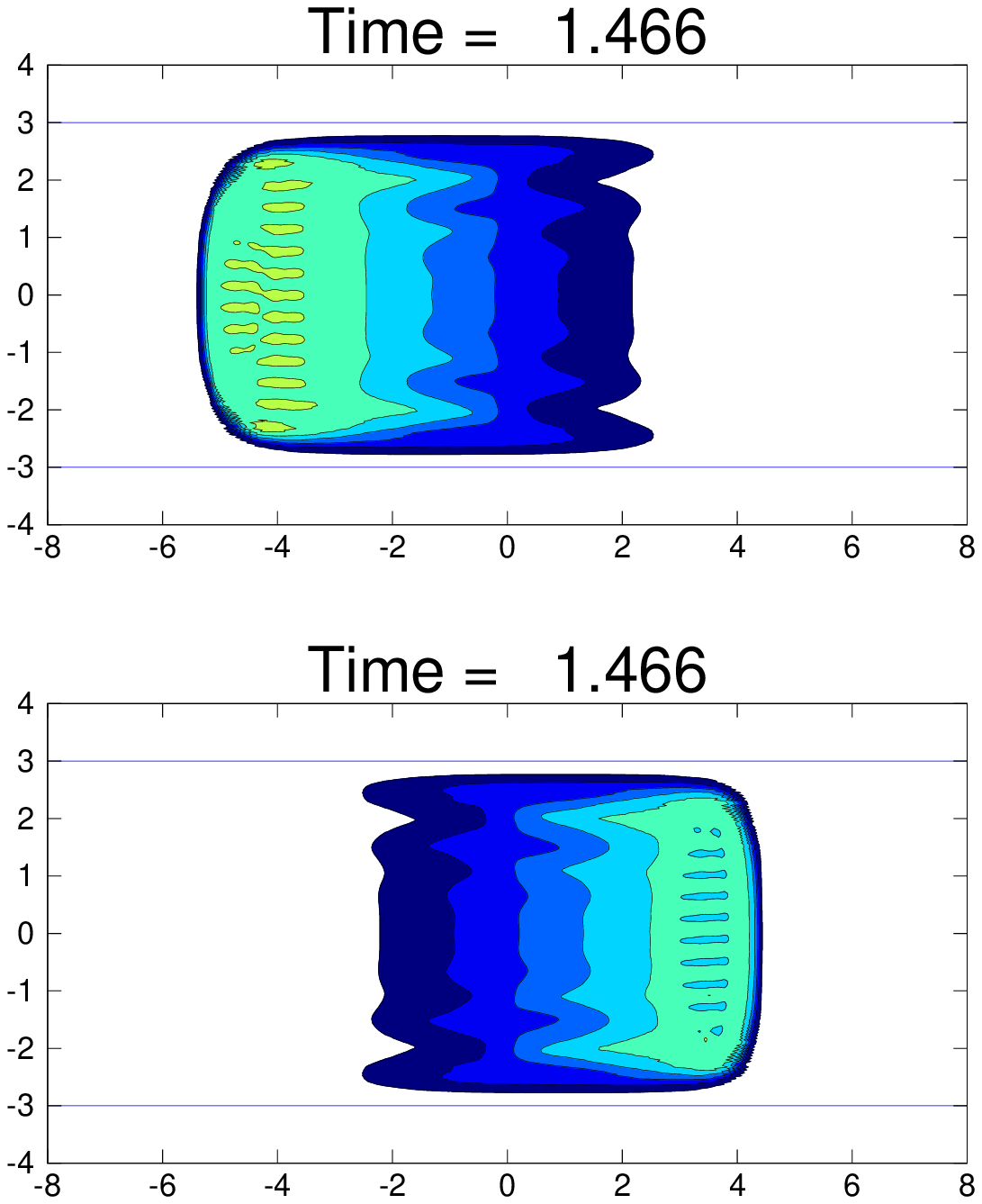}%
  \includegraphics[width=0.25\textwidth, trim=120 40 121 30]{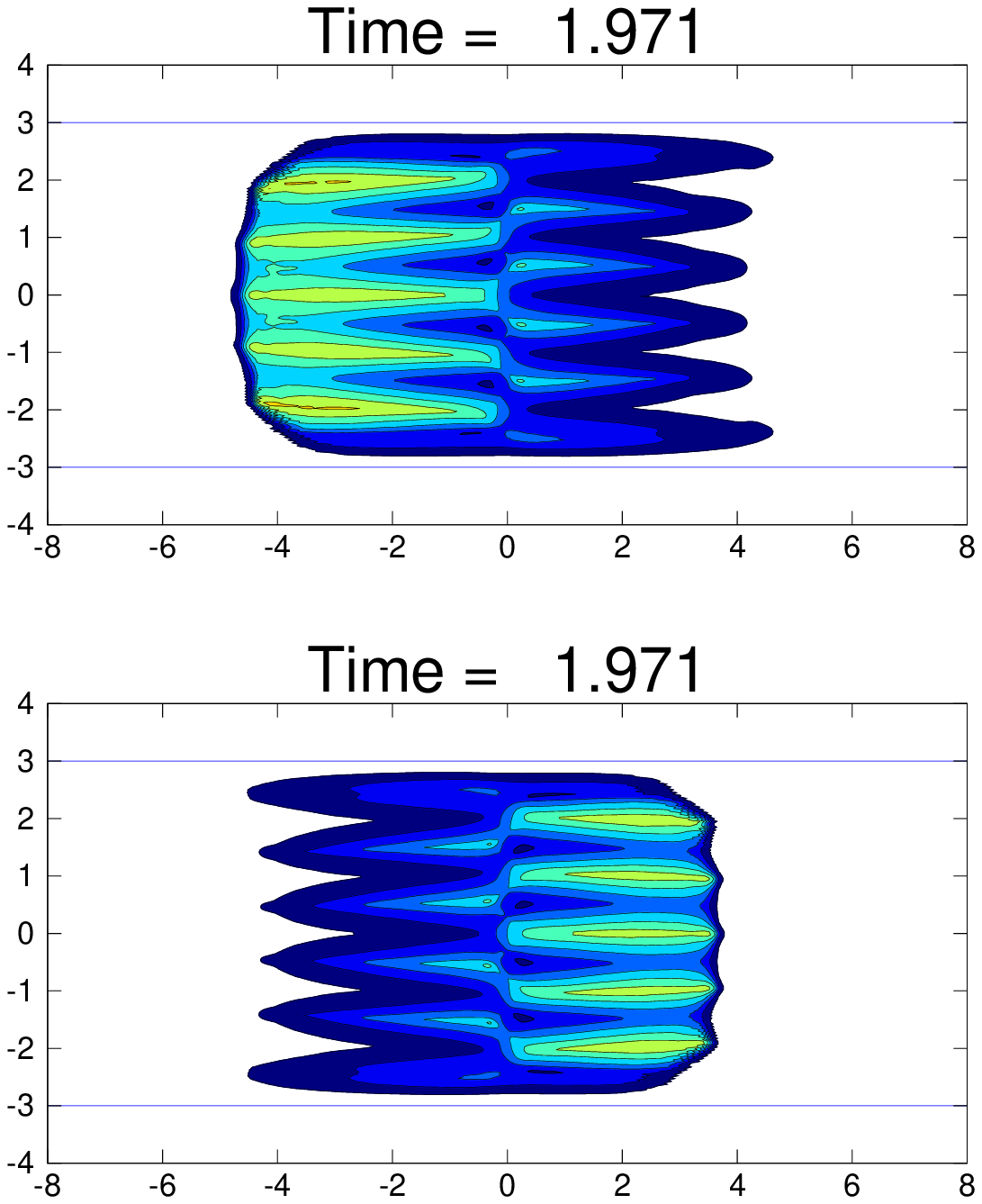}%
  \includegraphics[width=0.25\textwidth, trim=120 40 121 30]{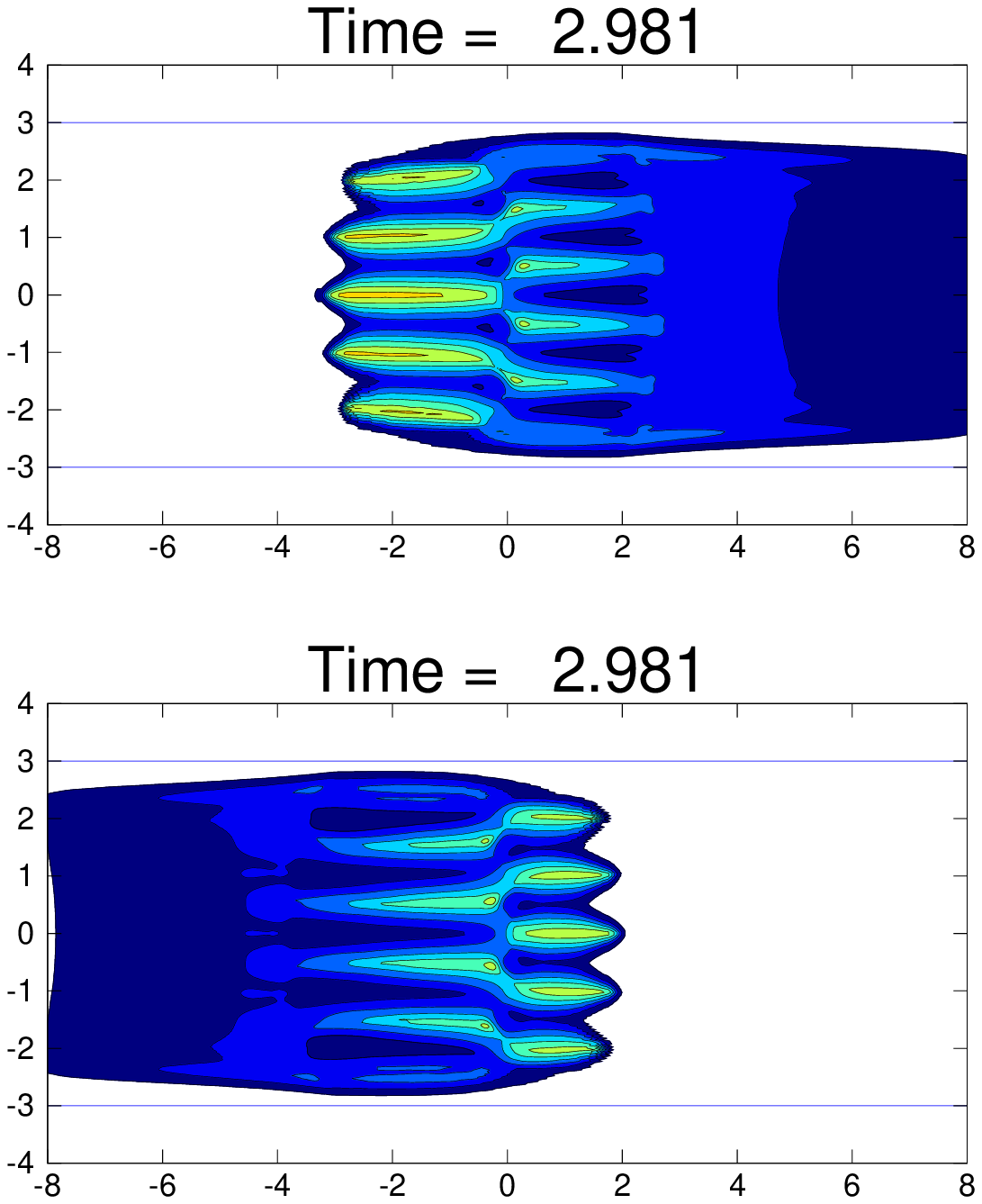}%
  \caption{Numerical integration of~\eqref{eq:order2}. Above, the population $\rho_1$
    moving to the right and, below, $\rho_2$ moving to the left. Note
    the lanes that are formed. First, due to the self-interaction ($\epsilon_i =0.3$)
    within each populations and then due to the crossing between the
    two populations ($\epsilon_o=0.7$). The latter lanes of different populations do not
    superimpose. Picture from \cite{ColomboLecureux}.}
  \label{fig:NI1}
\end{figure}

Another  situation developping interesting features is the following: two populations  
are initially uniformly distributed in the same region. At time t =0, the first populations  
starts moving towards an exit (first line of Figure \ref{fig:NI}), on the right while the second moves only to let 
the first one pass (second line of Figure \ref{fig:NI}).
\begin{figure}[htpb]
  \centering%
  \includegraphics[width=0.25\textwidth, trim=120 40 121
  30]{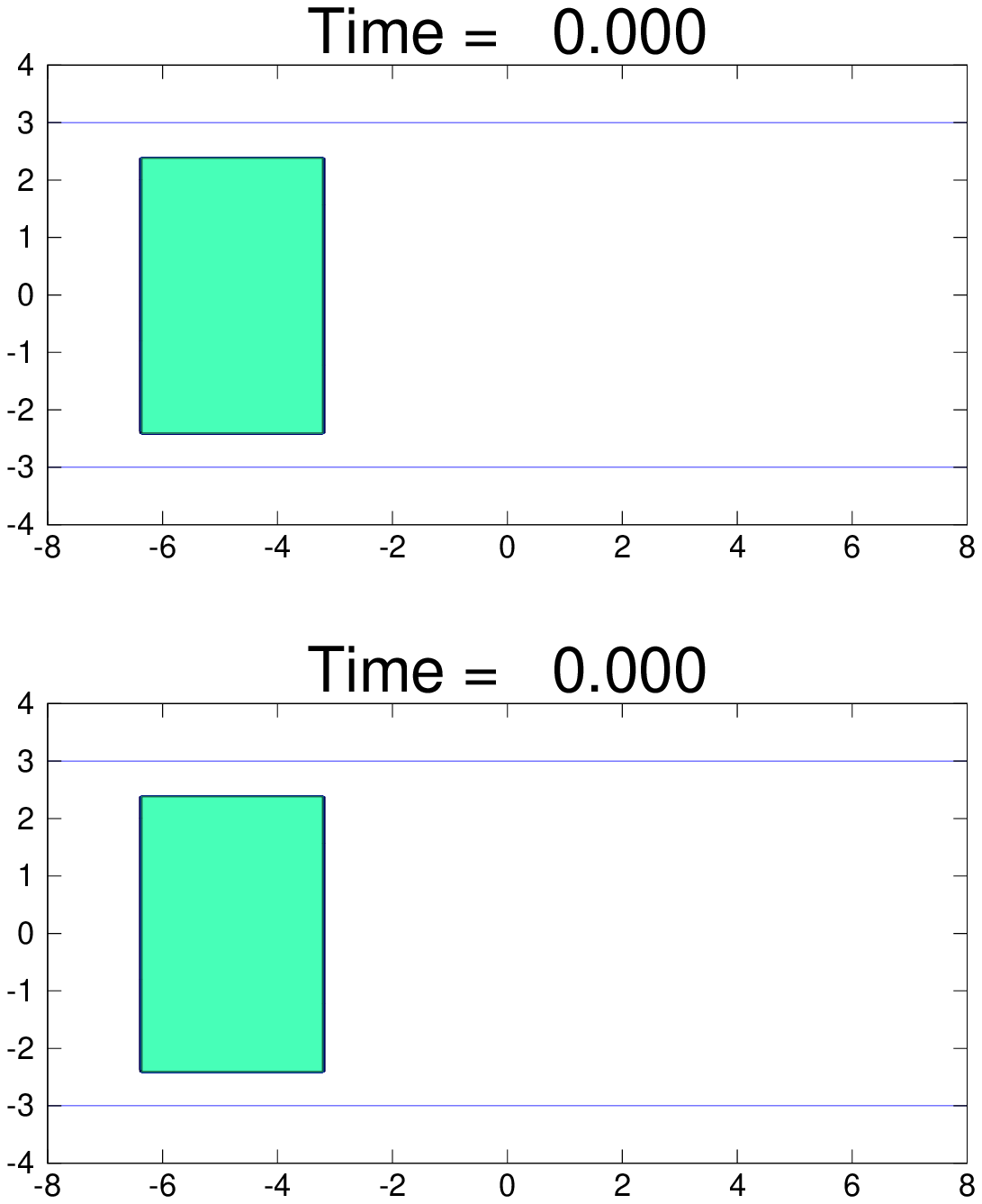}%
  \includegraphics[width=0.25\textwidth, trim=120 40 121
  30]{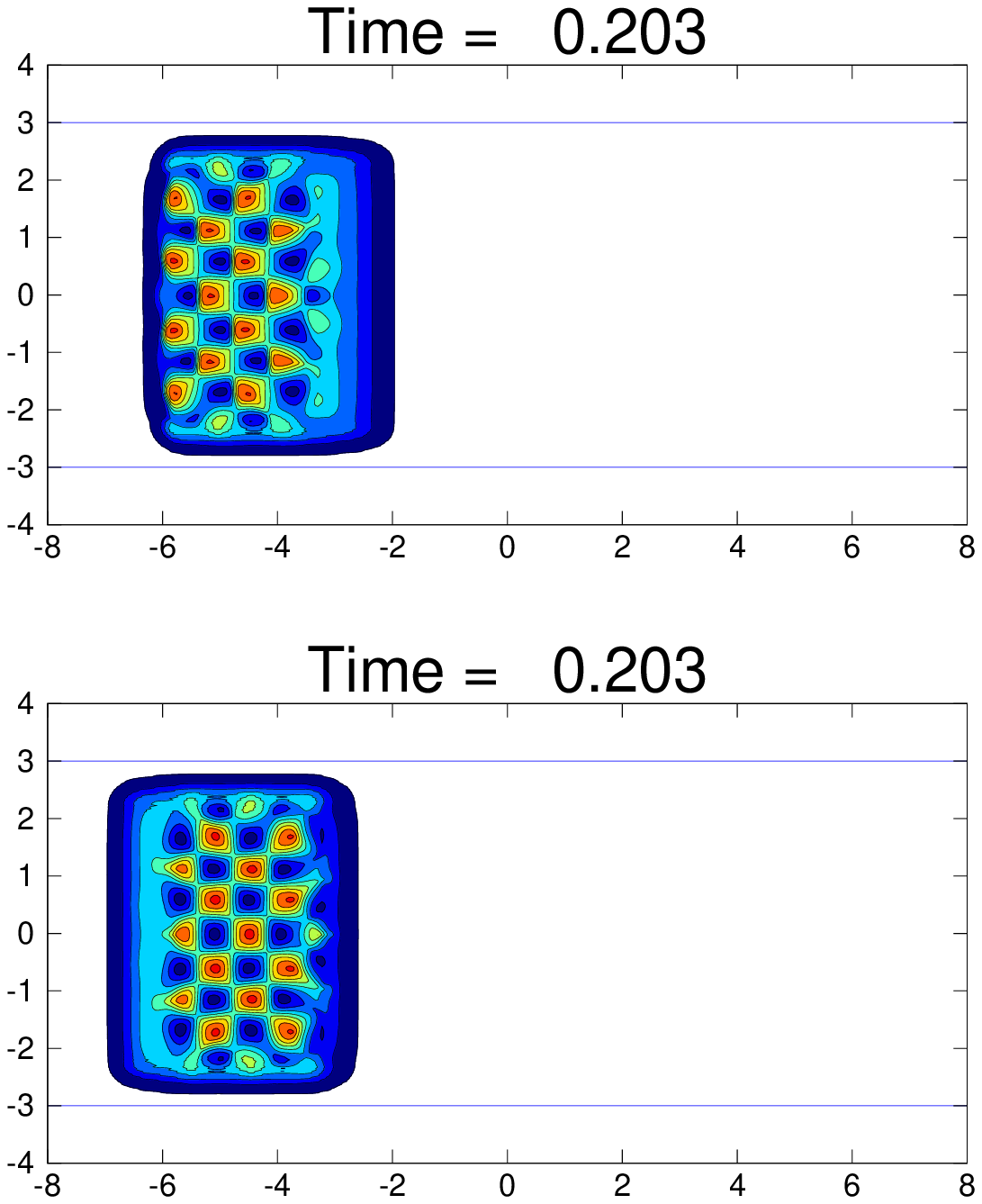}%
  \includegraphics[width=0.25\textwidth, trim=120 40 121
  30]{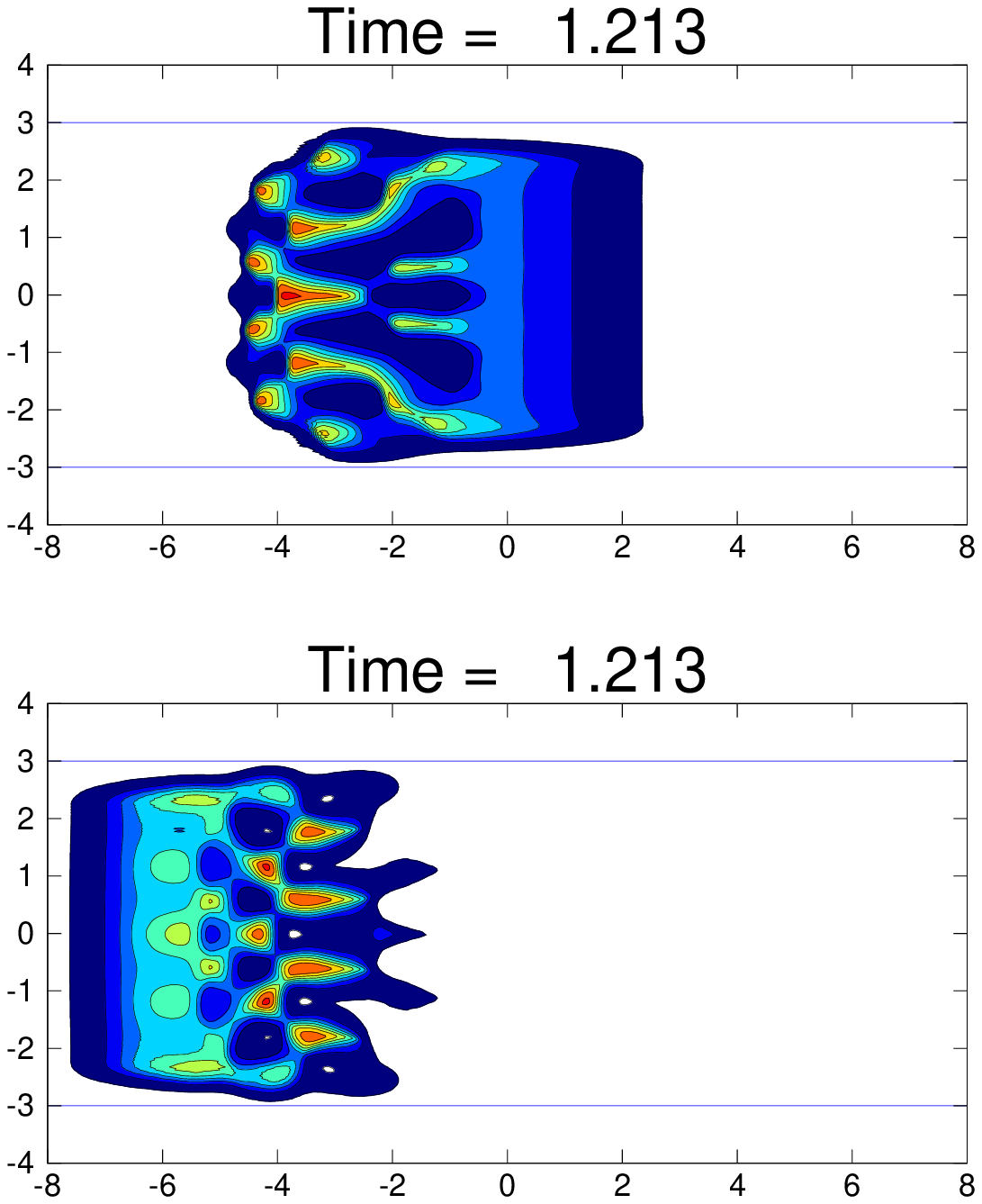}%
  \includegraphics[width=0.25\textwidth, trim=120 40 121
  30]{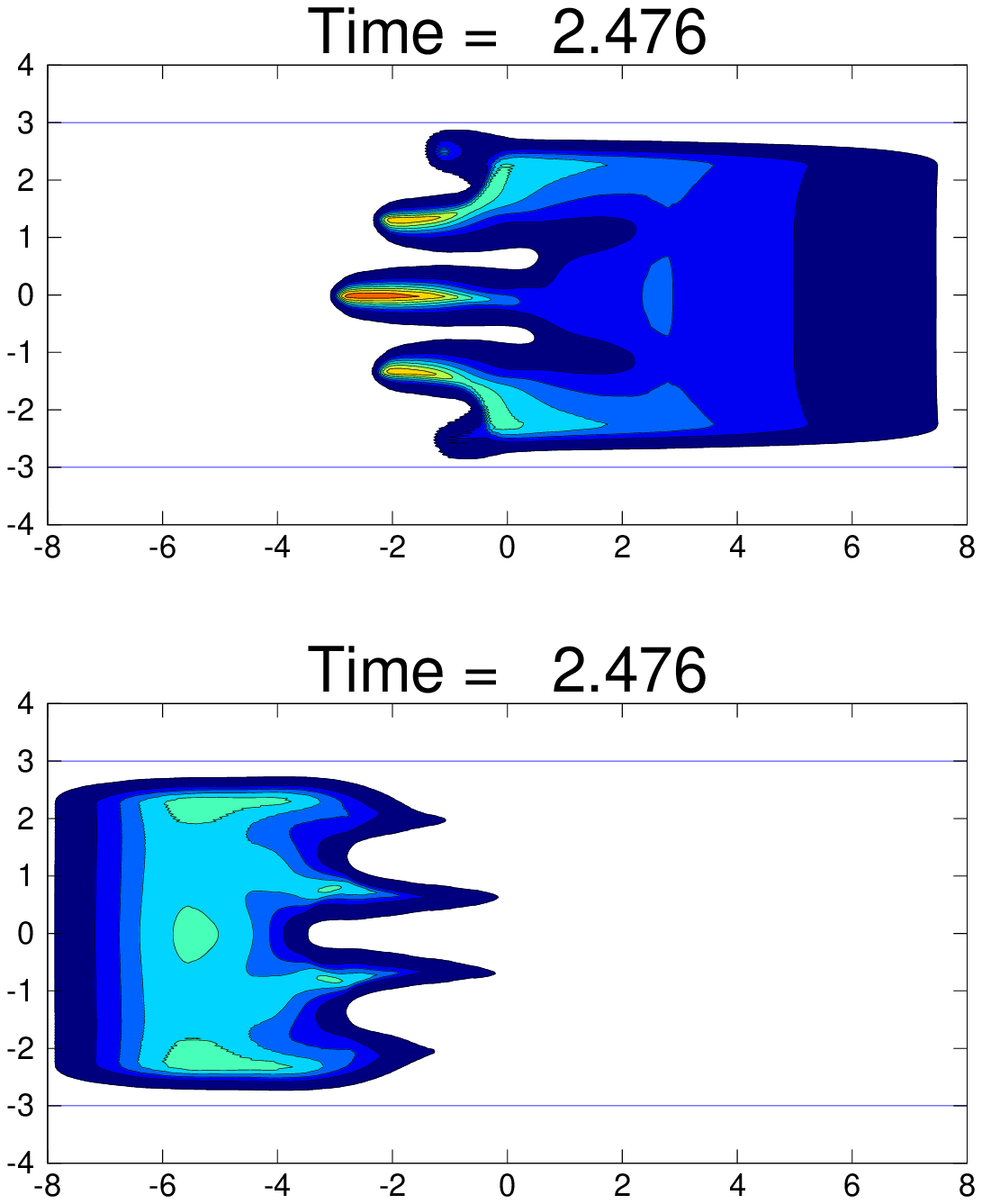}%
  \caption{Numerical integration of~\eqref{eq:order2} when with $\epsilon_i=0$, $\epsilon_o=0.3$. Above, the
    population $\rho_1$ and, below, $\rho_2$. Note first the formation
    of small clusters separating the two populations, then lanes and,
    finally, a sort of fingering. Picture from \cite{ColomboLecureux}.}
  \label{fig:NI}
\end{figure}

\subsubsection{Interaction group / isolated agent}
Let us finally introduce a model that would take into account the situation in which an isolated individual interact with a crowd. We think for example to the situation in which a predator is running after a group of preys. We can also think to the case of a leader willing to carry a group of followers to a given region.  Let $\rho\in \rpic$  be the density of the group and   $p\in \reali^{k}$ be  the position of an isolated agent (\textit{e.g.}  a leader or a predator). We describe the interaction by the coupling (see  \cite{ColomboMercier}):
\begin{equation}\label{eq:groupindiv}
  \left\{
    \begin{array}{l}
      \partial_t \rho
      +
      \Div  \big(\rho \,V \left(t, x, \rho, p(t)\right)\big)
      =
      0\,,
      \\
      \dot p
      =
      \phi\left( t, p, \left(A \rho(t)\right) (p(t)) \right)\,,
    \end{array}
  \right.
  \qquad
  \begin{array}{rcl}
    (t, x )& \in & \reali^+ \times \reali^{N}\,,
  \end{array}
\end{equation}
with initial conditions
$$
  \rho(0,x) =  \rho_0(x)\,,
      \qquad\qquad
      p(0) = p_0\,.
$$
Using once again Kru\v zkov theory and tools on the  stability of ordinary differential equations,  in collaboration with R. M. Colombo, we proved existence and uniqueness of solutions (see \cite[Theorem 2.2]{ColomboMercier}).

For example, we can consider:
\begin{itemize}
\item Followers / Leader (see Figure \ref{fig:hawk}): here  the vector $p\in \reali^2$ is the position of the leader and $\rho$ is the density of the group of followers.  The function $v(\rho)$ describes essentially the speed of the followers and is, as usual, a decreasing function, vanishing in $\rho=1$;   the direction of a follower located in $x$ is given by the vector $p(t)-x$, directed toward the leader. The velocity of the leader is increasing  with respect to the averaged density $\rho*\eta$, computed in the position of the leader. Indeed, we expect the leader to wait for the followers to join him when the density of followers is small around him, and to accelerate when the density of followers becomes bigger; this means low speed when the averaged density is small and high speed when the averaged density is maximal. The  direction  of the leader is a chosen vector field $\vec \psi(t)$. 
\[
  \left\{
    \begin{array}{l}
      \partial_t \rho
      +
      \Div\left(\rho \, v(\rho) \, (p(t)-x)e^{-\norma{p-x}}\right)
      =
      0\,,
      \\[5pt]
      \dot p
      =
      (1+ \rho*\eta(p(t))\, ) \; \vec{{\psi}}(t)\,.
    \end{array}
  \right.
\]
\begin{center}
\begin{figure}[htpb]
  \centering
  \includegraphics[width=0.25\textwidth]
  {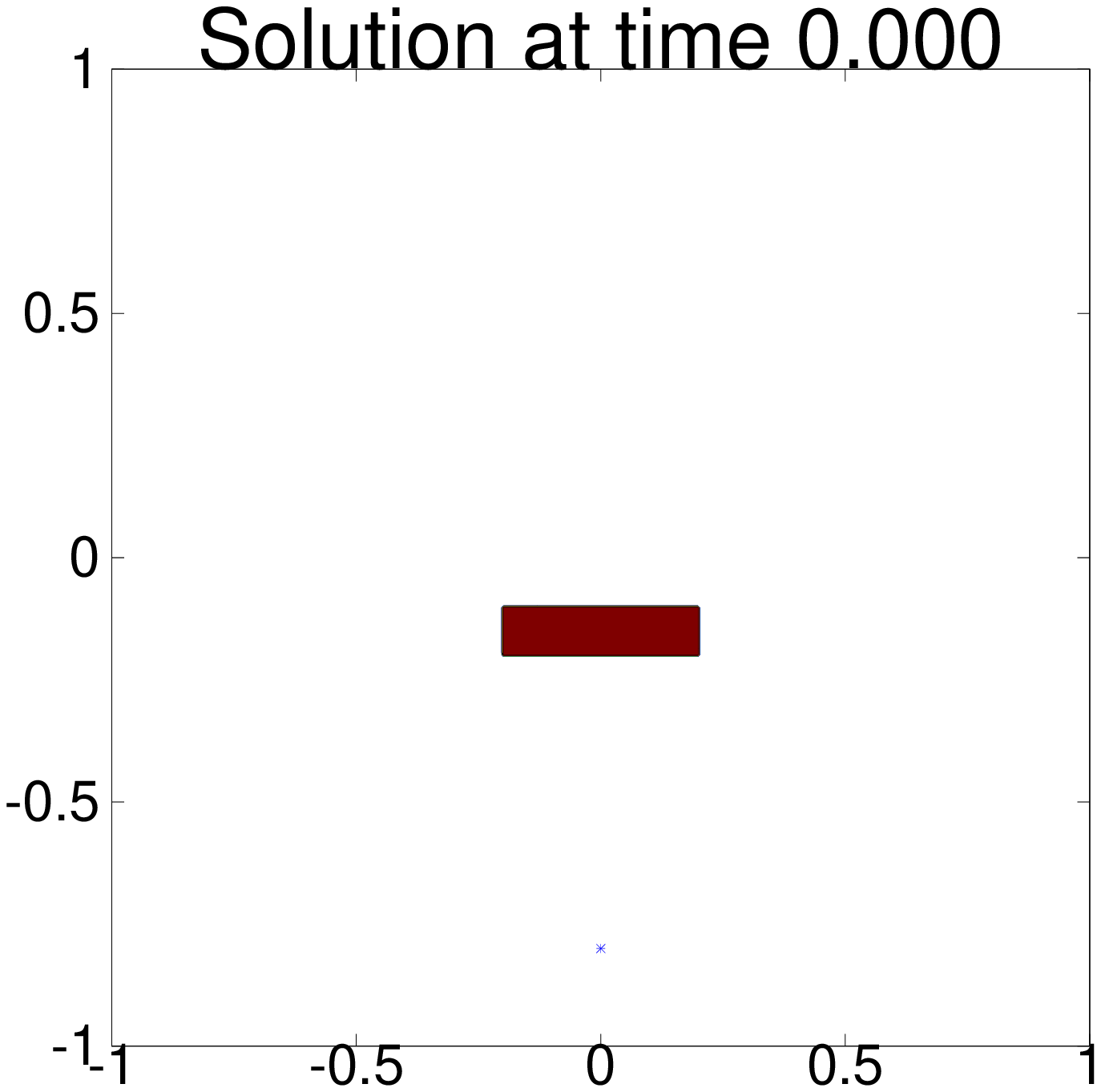}
  \includegraphics[width=0.25\textwidth]
  {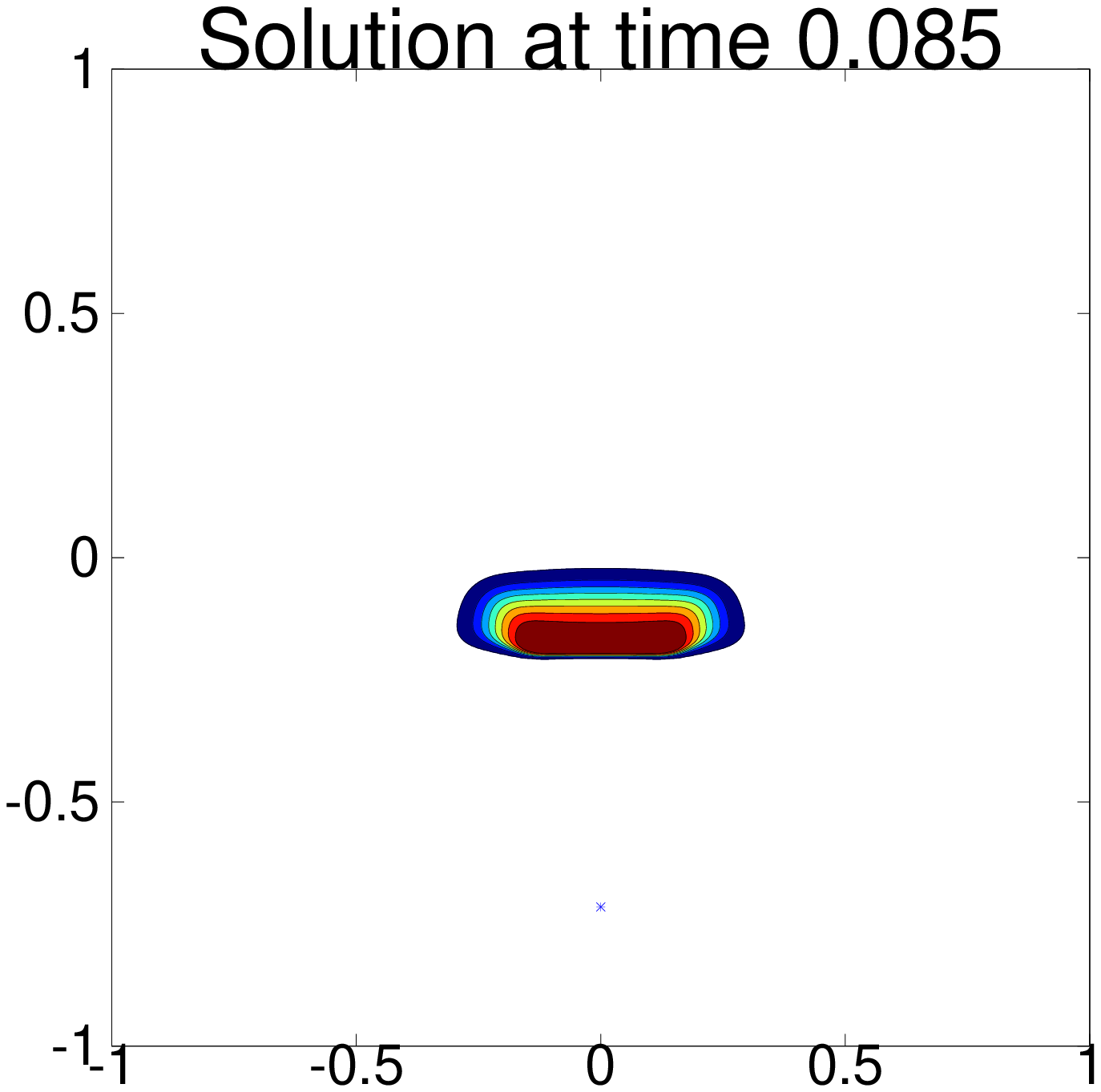}
  \includegraphics[width=0.25\textwidth]{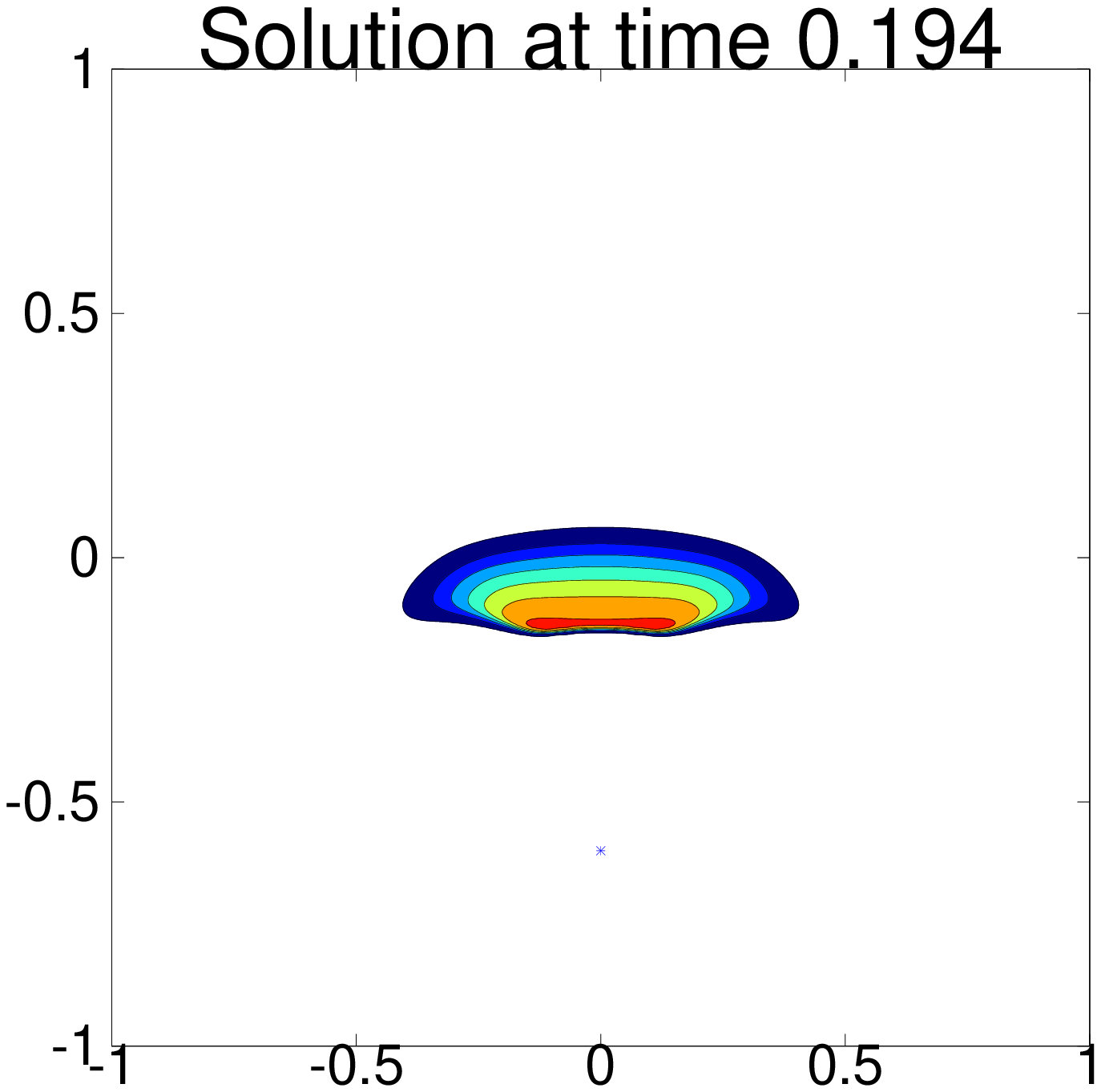} \\[0.7cm]
  \includegraphics[width=0.25\textwidth]{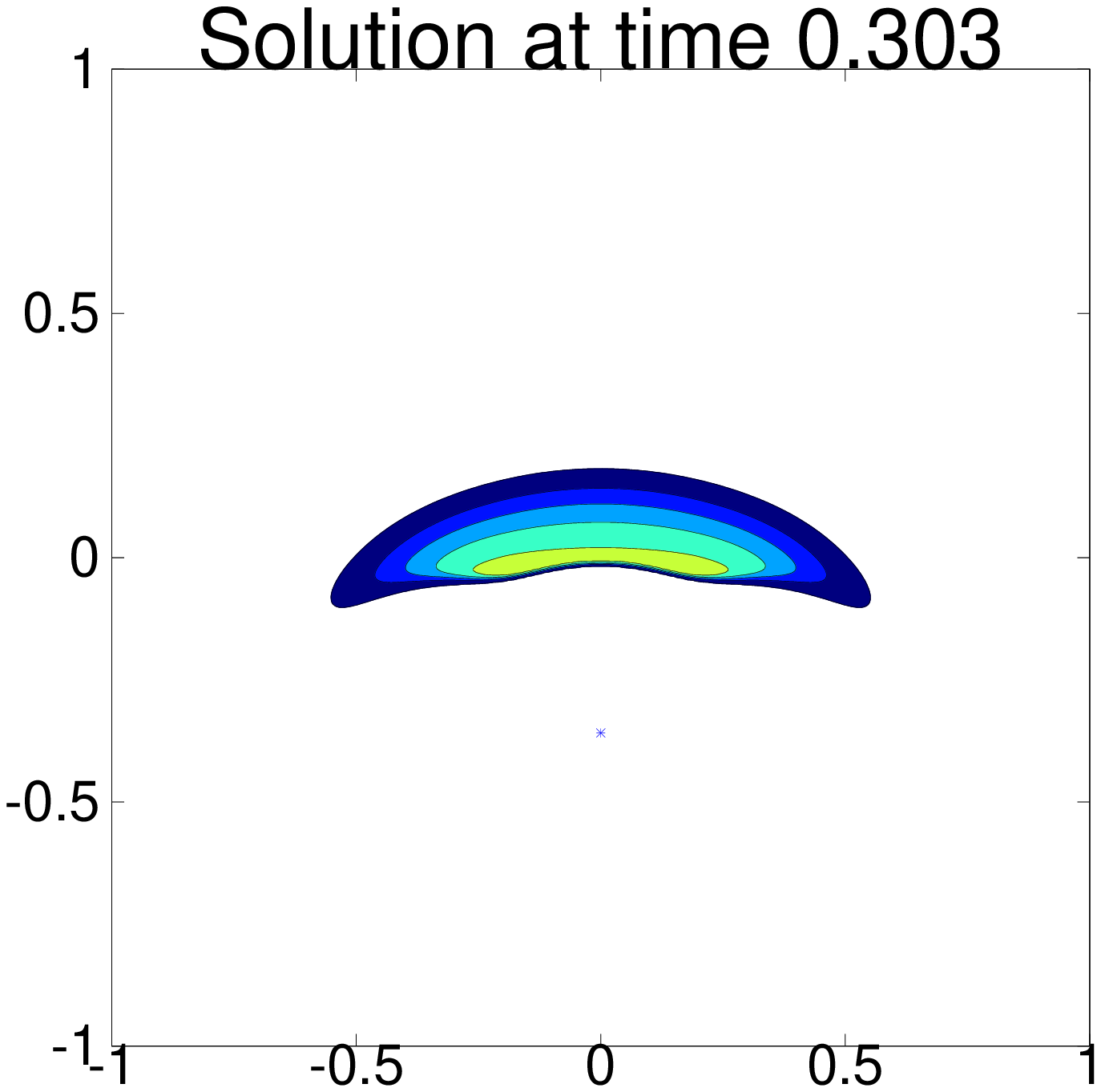}
  \includegraphics[width=0.25\textwidth]{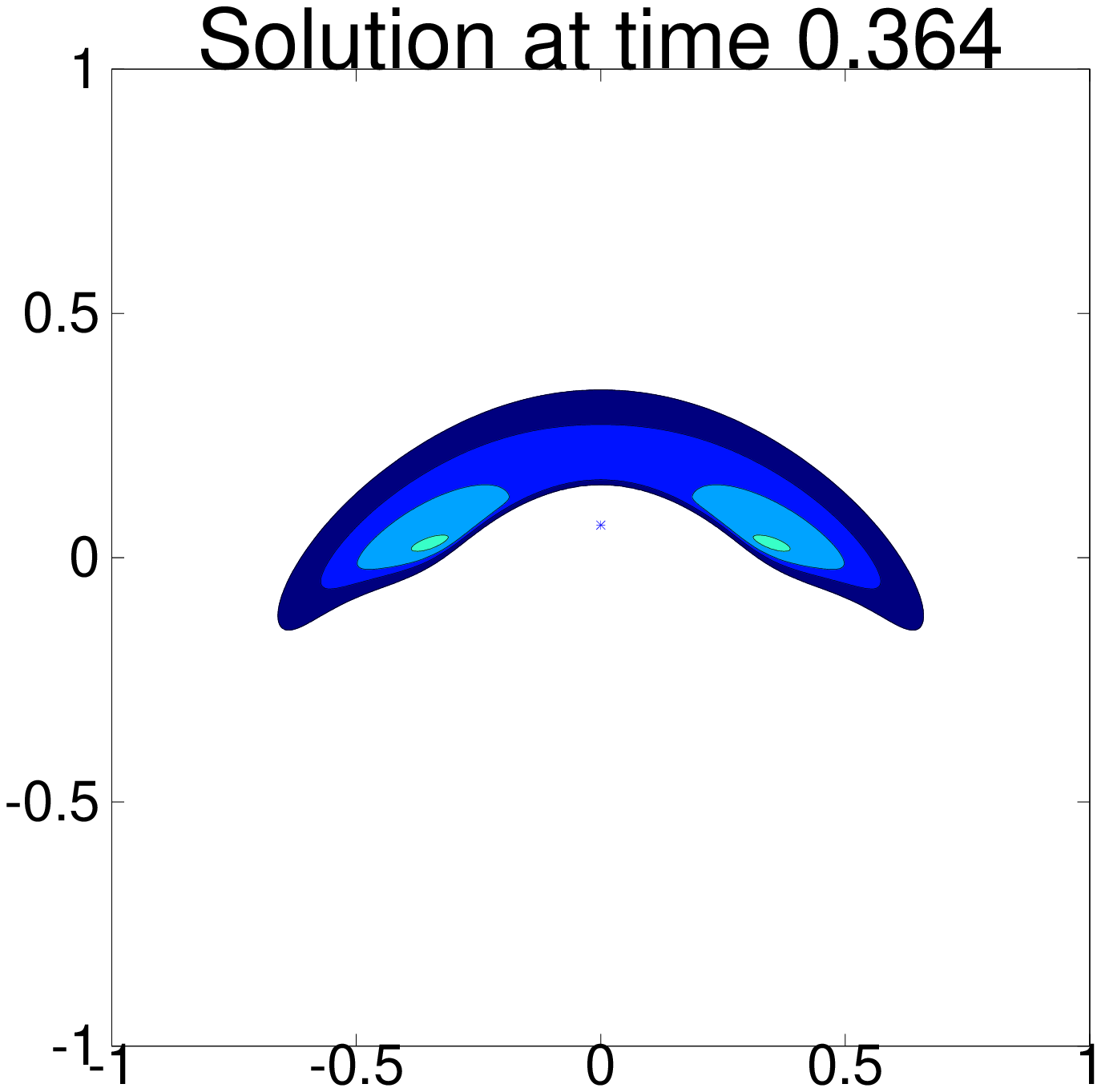}
  \includegraphics[width=0.25\textwidth]{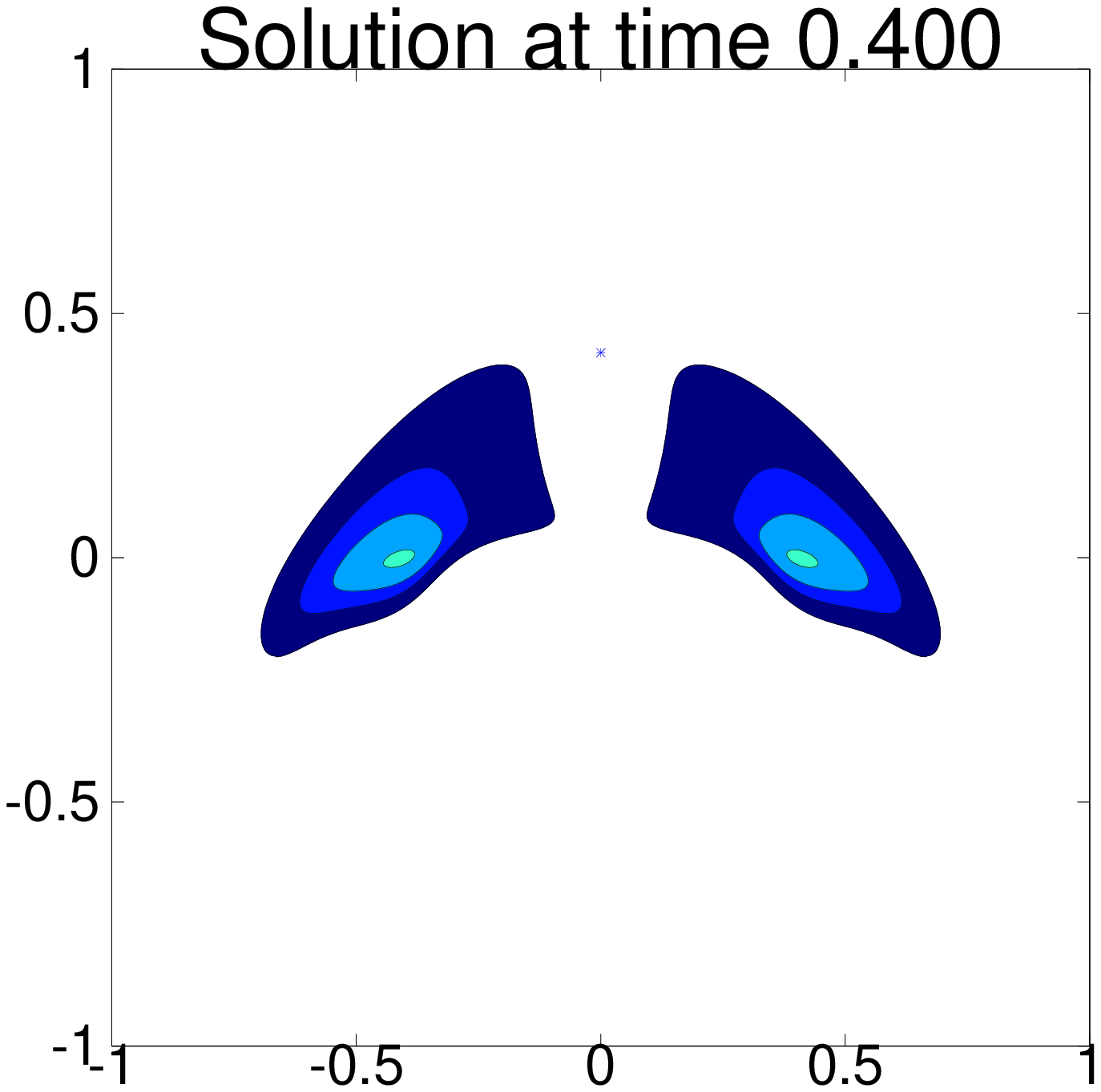}%
\caption{Numerical solution of (\ref{eq:groupindiv}) in a predator/prey interaction. Picture from \cite{ColomboMercier}.}  \label{fig:hawk}
\end{figure}
\end{center}
\item Sheeps / Dogs: in this example, there are $n$ dogs, located in $p_i(t)\in \reali^2$ for $i\in\{1, \ldots, n\}$ and a group of sheeps of density $\rho(t,x)$. The dogs want to constrain the sheeps to a given area by running around them and the sheeps are afraid by the dogs. As above, the speed of the sheeps is given by the decreasing function $v(\rho)$ that vanishes in $\rho=1$. The direction of a sheep located in $x$ is a sum of two terms. The first one is their prefered direction $\vec \nu(x)$ ; the second one is given by the vector   $\sum_{i=1}^n (x-p_i(t)) e^{-\norma{p_i-x}}$ representing the repulsive effects of the dogs on the sheeps, and its presence is is less felt if it is too far away. Each dog run around the flock tacking the direction perpendicular to the direction of maximal averaged density.
\[
  \left\{
    \begin{array}{l}
      \partial_t \rho
      +
      \Div\left( \rho \, v(\rho) \left( \vec{\nu}(x)
      + \sum_{i=1}^n  (x-p_i)e^{-\norma{p_i-x}} \right)\right)
      =
      0\,,
      \\[5pt]
 \displaystyle     \dot p_i
      =
     \frac{(\rho *\nabla \eta )^\perp(p_i(t))}{\sqrt{1+\norma{\rho *\nabla \eta(p_i(t)) }^2}}\,, \qquad \textrm{for all } i\in \{1, \ldots n\}\,.
    \end{array}
  \right.
\]
\item Preys / Predator: here a predator, located in $p(t)\in \reali^3$ is pursuing a group of preys of density $\rho(t,x)$. For example, we can think to an hawk pursuing a swarm of little birds or to a shark pursuing a shoal of little fishes. As above, the speed of the preys  is given by the decreasing function $v(\rho)$ that vanishes in $\rho=1$. The direction of a prey located in $x$ is given by the vector $(x-p(t)) e^{-\norma{p(t)-x}} $ so that the predator has a repulsive effect on the preys, and its presence is is less felt if it is too far away. The acceleration of the predator is directed toward the maximal averaged density of preys, as felt from its position, so it is directed as $\nabla (\rho*\eta)(p(t))$.
\[
  \left\{
    \begin{array}{l}
      \partial_t \rho
      +
      \Div\left(  \rho\, v(\rho) \, 
  \left(
   1
    +
   \, e^{-\norma{x-p(t)}} \, \left(x-p(t)\right)
  \right)\right)
      =
      0\,,
      \\[5pt]
     \displaystyle \frac{\mathrm{d}^2 p}{\mathrm{d}t^2}
      =
    (\rho *\nabla )\eta(p(t))\,.
    \end{array}
  \right.
\]
\end{itemize}

%
%

Note that Theorem \ref{thm:panicOT2} deals with measure solutions. Consequently, if $\rho_1\in \L1$ and $\rho_2=\delta_{p(t)}$, we recover the above coupling PDE/ODE. However, in Theorem  \ref{thm:panicOT2}, there are restricitions on the flow as we only deal with continuity equations. In particular, with a well-chosen nonlinearity with respect to $\rho$ in the first equation of (\ref{eq:groupindiv}), we are able to provide for (\ref{eq:groupindiv}) an a priori uniform bound on the $\L\infty$ norm, which is not possible for (\ref{eq:panic2}): in the three examples above, if $v(1)=0$, then  $0\leq \rho_0\leq 1$ implies  $0\leq \rho (t)\leq 1$ for all $t\geq 0$.

\section{Using Kru\v zkov theory}\label{sec:k}
\subsection{General Theory on scalar conservation laws}
In this section, we consider classical \emph{scalar} conservation laws. 
\begin{equation}\label{eq:scl}
\pt_t u+\Div f(t,x, u)=F(t,x,u)\,,\qquad \qquad u(0, x)=u_0(x)\,,
\end{equation}
where $f$ is the flow and $F$ is the source. Note that, $\Div$ stands for the total divergence whereas $\div$ stands for the partial divergence. Thus $\Div f (t,x, u(t,x))=\div f(t,x,u(t,x))+\pt_u f(t,x,u(t,x)) \cdot \nabla u(t,x)$.

Let us recall the definition of weak entropy solution:
\begin{definition}\label{def:wes}
The function $\rho\in \L\infty([0,T]\times \reali^N, \reali)$  is called \emph{weak entropy solution} to the Cauchy problem (\ref{eq:scl}) if it satisfies for all $k\in \reali$ and any test-function $\phi\in \Cc0(]-\infty,T[\times \reali^N, \rpic)$,
 \begin{equation}
    \begin{array}{r}
      \displaystyle
      \int_{\rpis}\int_{\reali^N} 
      \left[
        (u-k) \, \pt_t \phi
        +
        \left( f(t,x,u) - f(t,x,k) \right) \nabla_x \phi
        +
        \left(F(t,x,u)-\div f(t,x,k)\right)\phi
      \right]\\
     \times
      \mathrm{sign}(u-k) \,
      \mathrm{d}x \, \mathrm{d}t
      +\int_{\reali^N} \modulo{\rho_0(x)-k}\phi(0,x)\d{x}
      \geq
      0\,.
    \end{array}
  \end{equation}
\end{definition}
We also recall the well-known Kru\v zkov theorem \cite[Theorem  4]{Kruzkov}, giving existence and uniqueness of weak entropy solutions.
\begin{theorem}[Kru\v zkov]\label{thm:kru}
Let $u_0\in (\L\infty\cap\L1)(\reali^N, \reali)$.   For all $A, T\geq 0$,
we denote  $\Omega_{T}^A=[0,T]\times\reali^N\times[-A,A]$ and $\Omega=\rpic\times\reali^N\times \reali$. Under the conditions $f\in \C0(\Omega;\reali^N)$, $F\in \C0(\Omega;\reali)$ and
\[
\begin{array}{cl}
\mathbf{(K)}&
\left\{\begin{array}{l}
f \in \C0(\Omega; \reali^N)\,,
        \qquad 
        F \in \C0(\Omega; \reali)\,,\textrm{ and for all } T,  A>0:
        \\[5pt]
f\,,\; F \;\textrm{ have continuous derivatives: }\;\pt_u f\,,\; \pt_u\nabla f\,,\; \nabla^2 f\,,\; \pt_u F\,,\; \nabla F\,,\\
\textrm{for all } T, A>0\;,\;\; \pt_u f\in \L\infty(\Omega_{T}^A)\,,\\
 F-\div f\in \L\infty(\Omega_T^A)\,,\qquad \; \pt_u(F-\div f)\in \L\infty(\Omega_T^A)
 \end{array}\right.
 \end{array}
\] 
there exists a unique weak entropy solution $u\in \L\infty([0,T];\L1(\reali^N;\reali))$ of (\ref{eq:scl}) that is right-continuous in time.

Let  $v_0\in (\L1\cap\L\infty)(\reali^N;\reali)$. Let $u$ be the solution associated to the initial condition $u_0$ and $v$ be the solution associated to the initial condition $v_0$. Let $M$ be such that $M\geq \sup(\norma{u}_{\L\infty(\rpic \times \reali^N;\reali)}, \norma{v}_{\L\infty([0,T]\times \reali^N;\reali)})$. Then, for all $t\in [0,T]$, with $\gamma=\norma{\pt_u F}_{\L\infty(\Omega_T^M)}$, we have
\begin{equation}\label{eq:kru}
\norma{{(u-v)(t)}}_{\L1} \leq e^{\gamma t} \norma{u_0-v_0}_{\L1}\,.
\end{equation}
\end{theorem}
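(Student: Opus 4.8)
The statement bundles two assertions: well-posedness of \eqref{eq:scl} and the stability estimate \eqref{eq:kru}, the latter immediately giving uniqueness on setting $u_0=v_0$. My plan is to treat the estimate as the core and to obtain existence separately. For existence I would use vanishing viscosity: regularise \eqref{eq:scl} to $\pt_t u^\epsilon+\Div f(t,x,u^\epsilon)=F(t,x,u^\epsilon)+\epsilon\Delta u^\epsilon$, solve these parabolic problems globally under $\mathbf{(K)}$, and derive $\epsilon$-uniform $\L\infty$, $\L1$ and $\BV$ bounds. The $\L\infty$ bound comes from a comparison/Gr\"onwall argument and is exactly what produces the exponential-in-time growth of the $\L\infty$ norm quoted elsewhere in the paper; the $\BV$ bound supplies compactness. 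Helly's theorem then extracts a limit, which one verifies to be a weak entropy solution in the sense of Definition \ref{def:wes} by testing the viscous equation against $\mathrm{sign}(u^\epsilon-k)\,\phi$ (suitably regularised) and letting $\epsilon\to 0$.

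For the estimate I would run Kru\v zkov's doubling of variables. Writing the entropy inequality of Definition \ref{def:wes} for $u=u(t,x)$ with the Kru\v zkov constant $k$ replaced by $v(s,y)$, and symmetrically for $v=v(s,y)$ with $k$ replaced by $u(t,x)$, I test both against a single nonnegative function $\psi(t,x,s,y)=\phi\bigl(\tfrac{t+s}{2},\tfrac{x+y}{2}\bigr)\,\delta_\lambda(t-s)\,\delta_\lambda(x-y)$, where $\delta_\lambda$ is a standard approximate identity concentrating on the diagonal. Adding the two inequalities and integrating in $(t,x,s,y)$, the leading first-order terms organise into the Kru\v zkov bracket $\mathrm{sign}(u-v)\bigl(f(t,x,u)-f(t,x,v)\bigr)$ acting on $\nabla\psi$, together with the time term $\modulo{u-v}\,\pt_t\psi$ and the source contributions built from $F$ and $\div f$.

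The decisive step is the limit $\lambda\to 0$, which forces $s\to t$ and $y\to x$. The antisymmetric part of the flux terms cancels by the symmetry of $\delta_\lambda$, exactly as in the $x$-independent case. The genuine novelty here is the explicit $x$-dependence of $f$: integrating the surviving flux term by parts generates extra contributions of the form $\mathrm{sign}(u-v)\bigl(\div f(t,x,u)-\div f(t,x,v)\bigr)$. These are paired with, and precisely cancelled by, the $\div f(t,x,k)$ pieces that Definition \ref{def:wes} deliberately builds into the entropy inequality, so that only the true source difference $\mathrm{sign}(u-v)\bigl(F(t,x,u)-F(t,x,v)\bigr)$ remains. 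The continuity and $\L\infty$ hypotheses on $\pt_u f$, $\pt_u\nabla f$, $\nabla^2 f$ in $\mathbf{(K)}$ are exactly what guarantee every one of these terms is integrable on $\Omega_T^M$ and that the cancellation is legitimate. Choosing then $\phi$ to approximate the indicator of a large spatial ball times a time cut-off pushes the leftover flux term (supported where $\nabla\phi\neq 0$) out to spatial infinity, where the $\L1$ solutions carry negligible mass, leaving a closed differential inequality for $t\mapsto\norma{(u-v)(t)}_{\L1}$.

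Finally, bounding the source by $\modulo{F(t,x,u)-F(t,x,v)}\leq\gamma\,\modulo{u-v}$ with $\gamma=\norma{\pt_u F}_{\L\infty(\Omega_T^M)}$ yields
\[
\norma{(u-v)(t)}_{\L1}\leq \norma{u_0-v_0}_{\L1}+\gamma\int_0^t \norma{(u-v)(\tau)}_{\L1}\,\d\tau\,,
\]
and Gr\"onwall's lemma gives \eqref{eq:kru}. I expect the main obstacle to be neither the Gr\"onwall step nor the symmetric cancellation, both of which are routine, but the careful bookkeeping in the limit $\lambda\to 0$ of the terms generated by the $x$-dependence of the flux: one must check uniform control of $\pt_u f$, $\pt_u\nabla f$ and $\nabla^2 f$ on the compact set $\Omega_T^M$ determined by $M\geq\sup(\norma{u}_{\L\infty},\norma{v}_{\L\infty})$, which is precisely the role played by hypothesis $\mathbf{(K)}$.
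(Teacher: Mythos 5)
First, note that the paper itself does not prove Theorem \ref{thm:kru}: it is recalled verbatim from \cite[Theorem 4]{Kruzkov}, so the only fair comparison is with Kru\v zkov's classical argument. Your overall architecture (vanishing viscosity for existence, doubling of variables plus Gronwall for the $\L1$ stability estimate with $\gamma=\norma{\pt_u F}_{\L\infty(\Omega_T^M)}$) is exactly that classical argument, and your sketch of the doubling step --- including the observation that the $\div f(t,x,k)$ terms built into Definition \ref{def:wes} are what absorb the extra contributions created by the $x$-dependence of the flux, and that the remaining flux term is killed at spatial infinity using $\pt_u f\in\L\infty$ and the $\L1$ integrability of $u-v$ --- is correct in outline.

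The existence half, however, contains a genuine gap: under hypothesis $\mathbf{(K)}$ alone there are \emph{no} $\epsilon$-uniform $\BV$ bounds for the viscous approximations, so Helly's theorem cannot be invoked as you claim. The obstruction is precisely the $x$-dependence of the flux, and the paper itself exhibits the counterexample: $\pt_t u+\pt_x\cos x=0$ with $u_0=0$ satisfies $\mathbf{(K)}$, yet its solution $u(t,x)=t\sin x$ has $\tv\left(u(t)\right)=+\infty$ for every $t>0$ although $\tv(u_0)=0$. Controlling the total variation requires the additional hypotheses $\mathbf{(TV)}$ (boundedness of $\nabla\pt_u f$ and integrability of $\nabla(F-\div f)$) that the paper introduces only later, in Theorem \ref{teo:tv}, and that are not part of $\mathbf{(K)}$; moreover the data in Theorem \ref{thm:kru} are only in $\L1\cap\L\infty$, not in $\BV$. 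Kru\v zkov's own compactness mechanism is different: he derives $\epsilon$-uniform $\L1_{loc}$ translation estimates (an $\L1$ modulus of continuity inherited from $u_0$ and from the continuity properties of $f$, $F$) for the parabolic approximations and concludes by Riesz--Fr\'echet--Kolmogorov compactness together with a diagonal argument, rather than by any global $\BV$ bound. As written, your existence proof would fail at the compactness step; the stability/uniqueness half stands.
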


\subsubsection{Estimate on the dependence with respect to flow and source}

Let us now consider a pair of different flows $f,g\in \C2 ([0,T] \times \reali^N \times \reali; \reali^N)$ and different sources $F,G\in \C1 ([0,T] \times \reali^N \times \reali; \reali)$, let us denote $u,v$ the solutions of
\begin{align}
      \partial_t u + \Div \, f(t,x,u)& = F(t,x,u)\,, &u(0, \cdot)&=u_0\,,\label{eq:u}
      \\
       \partial_t v + \Div \, g(t,x,v) &=  G(t,x,v)\,, &v(0, \cdot)&=v_0\,,\label{eq:v}
\end{align}
with initial conditions   $u_0, v_0\in (\L1\cap\L\infty \cap\BV)(\reali^N, \reali)$. We want here to  estimate $\norma{(u-v)(t)}_{\L1}$ with $f-g$, $F-G$, $u_0-v_0$. To do that, we use the doubling variable method due to Kru\v zkov \cite{Kruzkov}. This strategy was already employed  by Lucier \cite{lucier}  and Bouchut \& Perthame \cite{BouchutPerthame} to study the case  in which  the flows $f, g$ depend only on $u$ and the source are identically zero ($F=G=0$). 
A key ingredient in the previous results is that the solution of $\pt_t u+\Div (f(u))=0$ with initial condition $u_0\in (\L\infty\cap\L1\cap\BV)(\reali^N, \reali)$ satisfies $\tv(u(t))\leq \tv (u_0)$. 

Let us remind the definition of total variation.
\begin{definition}
\label{def:tv}
For $u\in \Lloc1(\reali^N;\reali)$ we denote the total variation of~$u$: 
\begin{eqnarray*}
\tv(u)&=& \mathrm{sup}\big\{\int_{\reali^N} u\, \div \Psi\, ;\quad \Psi\in \Cc1(\reali^N;\reali^N)\, ,\quad \norma{\Psi}_{\L\infty}\leq 1\big\}\,.
\end{eqnarray*}

The space of function with bounded variation  is then defined as
\[
\BV(\reali^N;\reali) =\left\{u\in \Lloc1 ; \tv(u)<\infty\right\}\,.
\]
\end{definition}

\begin{remark}
If $u\in (\C1\cap\W11)(\reali^N, \reali)$ then $\tv(u)=\norma{\nabla u}_{\L1}$.
When  $f $ and  $F$  are not depending on  $u$, we have
\[
u_0\in (\L\infty \cap\BV)(\reali^N, \reali) \Rightarrow \forall t\geq0\,,\quad u(t)\in (\L\infty\cap\BV)(\reali^N, \reali)
\]
and, with {$\gamma=\norma{\pt_u F}_{\L\infty(\Omega_T^M)}$},
\[
\tv(u(t))\leq \tv(u_0)e^{\gamma t}\,.
\]
\end{remark}

\begin{theorem}[see \cite{lucier}]
Let $f,g :\reali\to \reali^N$ be globally  Lipschitz, let $u_0,v_0\in (\L1\cap\L\infty)(\reali^N;\reali)$  be initial conditions of
\[
\pt_t u+\Div\, f(u)=0\,,\qquad \pt_t v+\Div\, g(v)=0\,.
\]
Assume  furthermore  that $v_0\in \BV(\reali^N;\reali)$. Then  for all $ t\geq 0$,
\[
\norma{(u-v)(t)}_{\L1}\leq \norma{u_0-v_0}_{\L1} +C\, t\, \tv(v_0)\,\Lip(f-g)\,,
\]
where $C>0$ is a given constant.
\end{theorem}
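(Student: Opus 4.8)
The plan is to compare $u$ and $v$ by Kru\v zkov's doubling of variables, treating the mismatch of the two fluxes as a controlled perturbation of the $\L1$ contraction. Since $f$ and $g$ depend on the unknown alone and there is no source, the entropy inequality of Definition~\ref{def:wes} reduces, for $u$, to
\[
\iint \Big[ |u-k|\,\pt_t\phi + \mathrm{sign}(u-k)\,(f(u)-f(k))\cdot\nabla_x\phi\Big]\,\d{x}\,\d{t}\ge 0
\]
for every constant $k$ and every nonnegative test function (plus the initial term $\int|u_0-k|\,\phi(0,\cdot)$), and similarly for $v$ with $f$ replaced by $g$. First I would write the inequality for $u$ in the variables $(t,x)$ with Kru\v zkov constant $k=v(s,y)$, and the inequality for $v$ in the variables $(s,y)$ with $k=u(t,x)$, then integrate both over the remaining variables and add them. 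Using $|u-v|=|v-u|$ and $\mathrm{sign}(v-u)=-\mathrm{sign}(u-v)$, this produces
\[
\iiiint\Big\{|u-v|\,(\pt_t+\pt_s)\phi + \mathrm{sign}(u-v)\big[(f(u)-f(v))\cdot\nabla_x\phi+(g(u)-g(v))\cdot\nabla_y\phi\big]\Big\}\ge0
\]
for every nonnegative $\phi=\phi(t,x,s,y)$.

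Next I would specialize $\phi(t,x,s,y)=\Psi\big(\tfrac{t+s}2,\tfrac{x+y}2\big)\,\omega_\epsilon(t-s)\,\omega_\epsilon(x-y)$, with $\omega_\epsilon$ a standard mollifier and $\Psi\ge0$ a test function in the ``center'' variables. The operators $\pt_t+\pt_s$ and $\nabla_x+\nabla_y$ annihilate the mollifiers $\omega_\epsilon(t-s)$ and $\omega_\epsilon(x-y)$, so they act only on $\Psi$; this is exactly the cancellation that, when $f=g$, yields the $\L1$ contraction. Decomposing the spatial fluxes along $\nabla_x=\tfrac12(\nabla_\xi+\nabla_\zeta)$, $\nabla_y=\tfrac12(\nabla_\xi-\nabla_\zeta)$ (with $\xi$ the center and $\zeta$ the difference variable), the center part combines into the harmless $\tfrac12[(f(u)-f(v))+(g(u)-g(v))]\cdot\nabla\Psi$, whereas the difference part leaves the residual
\[
R_\epsilon=\iiiint \mathrm{sign}(u-v)\,\Psi\,\big[h(u)-h(v)\big]\cdot\nabla\omega_\epsilon(x-y)\,\omega_\epsilon(t-s)\,,\qquad h:=f-g\,,
\]
since $(f(u)-f(v))-(g(u)-g(v))=h(u)-h(v)$.

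The heart of the matter, and the step I expect to be the main obstacle, is to bound $R_\epsilon$ uniformly as $\epsilon\to0$: the factor $\nabla\omega_\epsilon$ is of size $1/\epsilon$, so the crude estimate $|h(u)-h(v)|\le\Lip(h)\,|u-v|$ is useless. The remedy is to transfer the derivative off the mollifier: writing $\nabla\omega_\epsilon(x-y)=-\nabla_y\omega_\epsilon(x-y)$ and integrating by parts in $y$, the gradient falls onto the $v$-dependence. Here one uses that $v\mapsto\mathrm{sign}(u-v)\,(h(u)-h(v))$ is Lipschitz with constant $\Lip(h)$ (the sign jump occurs exactly where $h(u)-h(v)$ vanishes), so that, by the chain rule for $\BV$ functions, its $y$-derivative is a measure of mass $\le\Lip(h)\,|\nabla_y v|$; the leftover derivative on $\Psi$ is lower order. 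Since $g$ is Lipschitz and $v_0\in\BV$, translation invariance of the equation together with the contraction (\ref{eq:kru}) with $\gamma=0$, applied to $v$ and its spatial translate, gives $\tv(v(s))\le\tv(v_0)$ for all $s$. This bounds
\[
|R_\epsilon|\le C\,\Lip(f-g)\iint |\nabla_y v(s,y)|\,\Psi \,\d{y}\,\d{s}+o(1)\qquad(\epsilon\to0)\,,
\]
the weight $|\nabla v|$ rather than Lebesgue measure being precisely what keeps the right-hand side finite and independent of any spatial cutoff.

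Finally I would pass to the limit $\epsilon\to0$ to obtain a Kru\v zkov inequality for $|u-v|$ whose source is controlled by $\Lip(f-g)\,\tv(v_0)$, and then choose $\Psi(t,x)=\alpha(t)\,\beta_R(x)$ with $\alpha$ approximating $\mathbf{1}_{[0,t]}$ and $\beta_R$ a spatial cutoff equal to $1$ on $B_R$. Letting $R\to\infty$, the center flux terms carrying $\nabla\beta_R$ vanish because the fluxes are integrable, the $\pt_t$ term together with the retained initial contributions produces $\norma{(u-v)(t)}_{\L1}-\norma{u_0-v_0}_{\L1}$ (using right-continuity in time), and the residual contributes at most $C\,t\,\Lip(f-g)\,\tv(v_0)$ since it is dominated by $\int_0^t\tv(v(s))\,\d{s}\le t\,\tv(v_0)$. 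Rearranging gives exactly the claimed estimate.
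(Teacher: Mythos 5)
Your proof is correct: the doubling-of-variables setup, the observation that $v\mapsto\mathrm{sign}(u-v)\,(h(u)-h(v))$ is Lipschitz with constant $\Lip(h)$ (so the derivative can be moved off the mollifier onto the $\BV$ function $v$ by the chain rule), and the bound $\tv(v(s))\le\tv(v_0)$ obtained from translation invariance plus the $\L1$-contraction (\ref{eq:kru}) are exactly the ingredients of the classical Lucier / Bouchut--Perthame argument. This is also essentially the paper's approach for this family of results: the theorem itself is quoted from \cite{lucier} without proof, and its generalization, Theorem \ref{thm:stab}, is proved by the same Kru\v zkov doubling-variables method (detailed in \cite{Lecureux}), with the difference-flux term controlled by the total variation of one of the two solutions just as in your residual estimate.
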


We want to generalize this theorem. 
Let us introduce the set of hypotheses:
\begin{align*}
    \mathbf{(FS)}&
    \left\{
      \begin{array}{l}
      \textrm{for all }U,\, T>0 \,,\qquad \pt_u F\in \L\infty(\Omega_T^U)\\[5pt]
        \displaystyle
\int_{0}^T \!\! \int_{\reali^N} \!
        \norma{(F- \div\, f)(t, x, \cdot)}_{\L\infty({[-U,U]};\reali)} 
        \, \mathrm{d}x\, \mathrm{d}t < + \infty\,.
      \end{array}
    \right.
    \end{align*}

Denoting $\Omega_T^V=[0,T]\times \reali^N\times [-V,V]$, we obtain:
\begin{theorem}[see \cite{Lecureux}]\label{thm:stab} 
Assume that  $(f,F), (g,G)$ satisfy \textbf{(K)}, that  $(f-g,F-G)$ satisfies \textbf{(FS)}. 
Let $u_0,v_0\in (\L1\cap\L\infty\cap\BV)(\reali^N;\reali)$. Let  $u$ and $v$ be the solutions of (\ref{eq:u}) and (\ref{eq:v}) associated to  $(f,F)$ and $(g,G)$  with initial conditions $u_0$ and $v_0$. Assume furthermore that $\tv(u(t))<\infty$ for all $t\geq 0$.

Let $V=\max(\norma{u}_{\L\infty}, \norma{v}_{\L\infty})$ and 
{$
\kappa= \norma{\pt_u F}_{\L\infty(\Omega_T^V)}
$}.
Then for all $ t\in [0,T]$:
\begin{align*}
&\norma{(u-v)(t)}_{\L1}\leq e^{\kappa t}\norma{u_0-v_0}_{\L1} +e^{\kappa t}\sup_{\tau\in [0,t]} \left(\tv(u(\tau)) \right) \int_0^t \norma{\pt_u(f-g)(\tau )}_{\L\infty(\reali^N\times[-V;V])}\d{\tau }\\[5pt]
& \quad+\int_0^t e^{\kappa (t-\tau)}\int_{\reali^N} \norma{((F-G)-\div\,(f-g))(\tau,x,\cdot)}_{\L\infty({[-V,V]})} \mathrm{d}x \mathrm{d}\tau\,.
\end{align*}
\end{theorem}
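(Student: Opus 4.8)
The plan is to prove the estimate by Kru\v zkov's doubling of variables method, treating the flux difference $f-g$ and the source difference $F-G$ as perturbations that generate controllable error terms. By Theorem~\ref{thm:kru} and hypothesis \textbf{(K)}, the solutions $u,v$ exist, are bounded (by $V$), and satisfy the entropy condition of Definition~\ref{def:wes}. Rewritten in the usual distributional form, the entropy inequality for $u$ reads, for every constant $k$,
\[
\dt\modulo{u-k}+\div_x\!\left[\mathrm{sign}(u-k)\,(f(t,x,u)-f(t,x,k))\right]\leq \mathrm{sign}(u-k)\left[F(t,x,u)-\div_x f(t,x,k)\right],
\]
and similarly for $v$ with $(g,G)$ replacing $(f,F)$. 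The initial-data penalization term in Definition~\ref{def:wes} will, after collapsing, supply the $\norma{u_0-v_0}_{\L1}$ contribution.

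First I would double the variables, using $(t,x)$ for $u$ and $(s,y)$ for $v$. In the inequality for $u$ I take $k=v(s,y)$, in the inequality for $v$ I take $k=u(t,x)$, and I add the two after integrating against the classical diagonal test function $\psi(t,x,s,y)=\phi\!\left(\tfrac{t+s}{2},\tfrac{x+y}{2}\right)\delta_\epsilon(t-s)\,\delta_\epsilon(x-y)$, with $\delta_\epsilon$ an approximate identity. Passing to the limit $\epsilon\to0$, the regular parts collapse onto the diagonal exactly as in classical Kru\v zkov theory: writing $\sigma=\mathrm{sign}(u-v)$, the time derivatives combine into $\modulo{u-v}\,\dt\phi$, and the matched flux contributions combine into the entropy flux $\sigma\,(f(t,x,u)-f(t,x,v))$ paired against $\nabla\phi$.

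Next I would collect the error terms produced by $f\neq g$ and $F\neq G$ and match them to the three pieces of the conclusion. The source contribution $\sigma[F(t,x,u)-G(t,x,v)]$ I split as $\sigma[F(t,x,u)-F(t,x,v)]+\sigma\,(F-G)(t,x,v)$: the first summand is bounded by $\kappa\modulo{u-v}$ and is precisely what yields the factor $e^{\kappa t}$ after Gronwall, while the second joins the flux-divergence remainder. The explicit spatial parts of the flux differences, together with $(F-G)$, assemble into $(F-G)-\div(f-g)$; since every argument lies in $[-V,V]$, this is controlled by $\norma{((F-G)-\div(f-g))(t,x,\cdot)}_{\L\infty([-V,V])}$, which is integrable in $(t,x)$ by hypothesis \textbf{(FS)} and produces the third term. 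The delicate contribution comes from the singular part of the mollifier derivative $\nabla_x\delta_\epsilon=-\nabla_y\delta_\epsilon$: because the fluxes no longer coincide, the pieces $\sigma(f(t,x,u)-f(t,x,v))$ and $\sigma(g(t,x,u)-g(t,x,v))$ fail to cancel, and their difference $\sigma[(f-g)(t,x,u)-(f-g)(t,x,v)]$ survives against $\nabla\delta_\epsilon$. Using that $u\in\BV$, I would pass to the limit by integrating $\nabla\delta_\epsilon$ by parts onto the factor depending on $u(t,x)$; the chain rule then yields $\sigma\,\pt_u(f-g)(t,x,u)\cdot\nabla u$, bounded in absolute value by $\norma{\pt_u(f-g)(\tau)}_{\L\infty(\reali^N\times[-V,V])}\,\modulo{\nabla u}$, and integrating $\modulo{\nabla u}$ gives $\tv(u(\tau))$. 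This is the source of the second term, and it is exactly where the hypothesis $\tv(u(t))<\infty$ is used.

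Finally, choosing $\phi$ to approximate the indicator of $[0,t]$ in time and constant in space, assembling the three contributions into a differential inequality for $\norma{(u-v)(t)}_{\L1}$, and applying Gronwall's lemma to absorb the $\kappa\modulo{u-v}$ term gives the claimed estimate; bounding the flux-difference weight $e^{\kappa(t-\tau)}$ by $e^{\kappa t}$ and pulling $\sup_{\tau}\tv(u(\tau))$ out accounts for the precise placement of the exponential factors in the statement. I expect the main obstacle to be the rigorous treatment of the singular flux term: controlling $\sigma[(f-g)(t,x,u)-(f-g)(t,x,v)]$ against $\nabla\delta_\epsilon$ in the limit $\epsilon\to0$ requires the $\BV$ bound on $u$, a careful justification of the integration by parts, and a clean reorganization of the various $\div_x$ pieces (evaluated at the two arguments $u$ and $v$) into $\div(f-g)$, each bounded uniformly by the $\L\infty([-V,V])$ norm.
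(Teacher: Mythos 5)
Your proposal is correct and follows essentially the same route as the paper: the paper gives no detailed proof of Theorem \ref{thm:stab}, stating only that it rests on the Kru\v zkov doubling-of-variables method (detailed in \cite{Lecureux}), and your sketch is precisely that method, with the standard Bouchut--Perthame-style treatment of the singular flux-difference term via integration by parts and the $\BV$ bound on $u$, yielding the $\tv(u)\,\norma{\pt_u(f-g)}_{\L\infty}$ contribution. Your splitting of the source into the Gronwall part ($\kappa\modulo{u-v}$) and the $(F-G)-\div(f-g)$ part, and your placement of the exponential factors, match the intended argument (compare the analogous doubling computation the paper sketches for Theorem \ref{teo:tv}).
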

The proof of this theorem is based on the  Kru\v zkov doubling variables method and is detailed in \cite{Lecureux}. 

In some particular cases, we recover known estimates:
\begin{itemize}
\item $f(u), g(u),  F=G=0$ : $\kappa=0$ and 
\begin{align*}
&\norma{(u-v)(t)}_{\L1}\leq \norma{u_0-v_0}_{\L1} +t\,\tv(u_0) \norma{\pt_u(f-g)}_{\L\infty({\Omega_T^V})}\,.
\end{align*}
\item $f(t,x), F(t,x)$ : $\kappa=0$, $u(t,x)=u_0(x)+\int_0^t(F-\div f)(\tau, x)\d{\tau}$,  and 
\begin{align*}
\norma{(u-v)(t)}_{\L1}\leq &\norma{u_0-v_0}_{\L1} +\int_0^t \int_{\reali^N} \modulo{((F-G)-\div\,(f-g))(\tau,x)} \mathrm{d}x \mathrm{d}\tau\,.
\end{align*}
\end{itemize}

\subsubsection{Total variation estimate}
To complete Theorem \ref{thm:stab}, we need an estimate on the total variation, defined in  Definition \ref{def:tv}. To do so we need a few more hypotheses. Indeed, there is no reason why the total variation should be bounded for all time. In fact, let  us consider the equation $\pt_t u+\pt_x \cos x=0$, $u_0=0$. The solution of this Cauchy problem is $u(t,x)=t\sin x$ whose total variation is 0 at time 0 and $+\infty$ for any time $t>0$.

Our goal now is to give a general estimate on the total variation.
Let us introduce the following set of hypotheses
\begin{align*}
 \mathbf{(TV) :}\left\{
      \begin{array}{l}
     \textrm{for all }A, T>0\\
\nabla\pt_u f\in \L\infty(\Omega_T^A  ;\reali^{N\times N})\,,\quad \pt_u F\in \L\infty(\Omega_T^A  ;\reali)\,,\\[5pt]
\displaystyle  \int_0^T\!\!\int_{\reali^N}\!\!\norma{\nabla(F-\div f)(t,x,\cdot)}_{\L\infty([-A,A];\reali^N)} < \infty\,.
      \end{array}
    \right.
\end{align*}

We obtain
\begin{theorem}
  \label{teo:tv}
 Let us assume $(f,F)$ satisfies \textbf{(K)}-\textbf{(TV)}. Denote $
  W _N
  =
  \int_0^{\pi/2} (\cos \theta)^N \, \mathrm{d}\theta $,  $M=\norma{u}_{\L\infty([0,T]\times \reali^N)}$, and 
\[
 \kappa_0= (2N+1)\norma{\nabla\pt_u f}_{\L\infty(\Omega_T^M)}+\norma{\pt_u F}_{\L\infty(\Omega_T^M)}\,.
\]
 Then, the weak entropy solution
  $u$ of~(\ref{eq:scl}) satisfies $u(t) \in \BV(\reali^N; \reali)$
  for all $t > 0$, 
 and   
  \begin{eqnarray}
    \label{result}
    \tv \left( u(T) \right)
    &\leq &
    \tv(u_0) \, e^{\kappa_0 T}   +
    N W_N \!\! \int_0^T \!\!e^{\kappa_0(T-t)}\!\! \int_{\reali^N}\!
    \norma{\nabla( F - \div f)(t, x, \cdot)}_{\L\infty([-{U}_t,{U}_t];\reali)}
    \mathrm{d}x\, \mathrm{d}t \,,\nonumber
  \end{eqnarray}
  where ${U}_t= \sup_{y\in \reali^N}\modulo{u(t,y)}$.
\end{theorem}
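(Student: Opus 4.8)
The plan is to reduce the statement to a one--parameter family of \emph{directional} increment estimates and to recombine them by an averaging over the unit sphere, whose geometry produces exactly the constant $NW_N$. The Gr\"onwall structure of the claimed inequality shows that it is the integrated form of a differential inequality $\frac{d}{dt}\tv(u(t))\le\kappa_0\,\tv(u(t))+NW_N\int_{\reali^N}\norma{\nabla(F-\div f)(t,x,\cdot)}_{\L\infty}\d x$, so the whole task is to produce this inequality with the sharp rate $\kappa_0$ and the sharp source constant $NW_N$.

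First I would fix a unit vector $e$ and a small $h>0$ and compare $u$ with its translate $u^{h}(t,x)=u(t,x+he)$. Since $u^{h}$ is the weak entropy solution of (\ref{eq:scl}) for the shifted flow $f(t,x+he,\cdot)$ and shifted source $F(t,x+he,\cdot)$ with datum $u_{0}(\cdot+he)$, the two functions solve scalar conservation laws differing only through the spatial translation of $f$ and $F$. I would run Kru\v zkov's doubling of variables for this pair by hand, rather than quoting Theorem~\ref{thm:stab} directly, because the packaged constant there is not sharp and leaves a $\sup_{\tau}\tv(u(\tau))$ prefactor that does not close a Gr\"onwall loop. The spatial increment of the flow, controlled by $h\,\norma{\nabla\pt_u f}_{\L\infty}$ after a mean value estimate, must instead be \emph{absorbed into the exponential rate}. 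To find that rate I would use the formal differentiation of the transport form $\pt_t u+\pt_u f\cdot\nabla u=F-\div f$ in space as a reliable bookkeeping device: estimating $\frac{d}{dt}\int\modulo{\nabla u}$ one meets the transport/divergence term $\div(\pt_u f)$, a quadratic term built from $\nabla\pt_u f$ acting on $\nabla u$, and the term $\pt_u(F-\div f)=\pt_u F-\div(\pt_u f)$; keeping track of operator norms is precisely what produces the coefficient $(2N+1)$ in front of $\norma{\nabla\pt_u f}_{\L\infty}$ together with $\norma{\pt_u F}_{\L\infty}$, i.e. the rate $\kappa_{0}$. The doubling of variables is the rigorous counterpart of this computation for merely $\BV$ data.

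Dividing the increment bound by $h$ and letting $h\to0^{+}$, I would obtain, for each direction $e$, an estimate $\modulo{D_{e}u(t)}\le e^{\kappa_{0}t}\modulo{D_{e}u_{0}}+\int_{0}^{t}e^{\kappa_{0}(t-\tau)}\,\tilde S(\tau)\,\d\tau$, where $\modulo{D_{e}u}=\lim_{h\to0}\frac1h\norma{u(\cdot+he)-u}_{\L1}$ is the variation of $u$ in the direction $e$ and $\tilde S(\tau)=\int_{\reali^N}\norma{\nabla(F-\div f)(\tau,x,\cdot)}_{\L\infty([-U_\tau,U_\tau])}\d x$ is \emph{independent of $e$}, since it comes from bounding the directional derivative by the full gradient norm. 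Integrating over $e\in\mathbb{S}^{N-1}$ and invoking the classical identity $\tv(u)=\frac{1}{c_{N}}\int_{\mathbb{S}^{N-1}}\modulo{D_{e}u}\,\d\sigma(e)$ with $c_{N}=\int_{\mathbb{S}^{N-1}}\modulo{e_{1}}\,\d\sigma(e)$, the initial term reproduces $e^{\kappa_0 T}\tv(u_0)$, while the sphere integral of the direction-independent source contributes the factor $\modulo{\mathbb{S}^{N-1}}/c_{N}$, which a Wallis/$\Gamma$-function computation identifies with $NW_{N}$. A Gr\"onwall argument then yields the claimed bound.

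Two points require care. Since the estimate is a priori vacuous while $\tv(u(t))$ is not known to be finite, I would first run the whole scheme on smooth approximations (regularised $f,F,u_{0}$, or vanishing viscosity) for which $u$ is classical and $\tv(u(t))<\infty$, derive the bound uniformly, and pass to the limit using the $\L1$--contraction (\ref{eq:kru}) together with the lower semicontinuity of $\tv$ under $\Lloc1$ convergence; this simultaneously gives $u(t)\in\BV$ and the inequality. The main obstacle I expect is the absorption of the spatial flow increment into the exponential rate with the sharp constant $(2N+1)$: the naive comparison leaves a term of the shape $\sup_{\tau\le t}\tv(u(\tau))\int_0^t\norma{\nabla\pt_u f}_{\L\infty}\,\d\tau$ that must be linearised in time and reabsorbed into the Gr\"onwall loop, and one must check that this absorption, carried out direction by direction, is compatible with the entropy dissipation and yields the same rate $\kappa_{0}$ for every $e$ before the averaging over $\mathbb{S}^{N-1}$ is performed.
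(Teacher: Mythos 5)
Your strategy (directional translates $u(\cdot+he)$, a doubling-of-variables estimate for each fixed direction $e$, then averaging over $\mathbb{S}^{N-1}$ to produce $NW_N$) is genuinely different from the paper's proof, which never argues direction by direction: the paper studies the isotropic increment functional $\mathcal{F}(T,\lambda)=\int_0^T\!\int\!\int \modulo{u(x+y)-u(x)}\,\mu_\lambda(y)\d{x}\d{y}\d{t}$ built on a radial mollifier at scale $\lambda$, runs the doubling of variables keeping $\lambda$ fixed, obtains the differential inequality (\ref{eq:1}) in the two variables $(T,\lambda)$, and gets membership in $\BV$ from Proposition \ref{prop:tv} (a uniform bound on $\mathcal{I}(\lambda)$ implies $\BV$), with no regularization and no lower-semicontinuity step. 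The averaging over displacement directions is thus built into the functional from the start, and this is not cosmetic: it is exactly the point where your proposal breaks.

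The gap is your intermediate claim that, for each fixed $e$, $\modulo{D_e u(t)}\le e^{\kappa_0 t}\modulo{D_e u_0}+\int_0^t e^{\kappa_0(t-\tau)}\tilde S(\tau)\d{\tau}$. This directional estimate is false in general, because the term created by the $x$-dependence of $\pt_u f$ is not proportional to the directional increment: differentiating $\pt_t u+\pt_u f\cdot\nabla u=(F-\div f)(t,x,u)$ in the direction $e$ produces, for $w=\pt_e u$, the term $-(\pt_e\pt_u f)\cdot\nabla u$, which involves the \emph{full} gradient of $u$, so variation in the direction $e$ can be created from variation in the orthogonal directions. Concretely, for $N=2$ take $f(x,u)=\left(0,a(x_1)g(u)\right)$ and $F=0$: then $\div f(x,k)=0$, so $\tilde S\equiv 0$; yet for data depending essentially only on $x_2$ (truncated in $x_1$ by a wide plateau so that $u_0\in\L1$), the entropy solution, given for small times by $u(t,x)=u_0(\xi)$ with $x_2=\xi+t\,a(x_1)\,g'\left(u_0(\xi)\right)$, acquires $x_1$-variation that grows linearly with the plateau width while $\modulo{D_{e_1}u_0}$ stays bounded; no inequality of the claimed form can hold. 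So the "absorption into the rate, direction by direction" that you yourself flag as the main obstacle is not merely delicate, it fails. The repair is to exchange your last two steps: integrate the \emph{non-closed} directional inequality $\frac{\mathrm{d}}{\mathrm{d}t}\modulo{D_e u}\le C\modulo{D_e u}+\norma{\nabla\pt_u f}_{\L\infty}\tv\left(u(t)\right)+\int_{\reali^N}\modulo{\pt_e(F-\div f)}\d{x}$ over $e\in\mathbb{S}^{N-1}$ \emph{first}, use $\int_{\mathbb{S}^{N-1}}\modulo{D_e u}\d{\sigma(e)}=c_N\tv(u)$ and $\int_{\mathbb{S}^{N-1}}\modulo{\pt_e h}\d{\sigma(e)}=c_N\modulo{\nabla h}$ with $c_N=\int_{\mathbb{S}^{N-1}}\modulo{e_1}\d{\sigma(e)}$, and only then apply Gronwall to the spherical average, i.e. to $\tv\left(u(t)\right)$. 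The cross-direction term then contributes $\left(\modulo{\mathbb{S}^{N-1}}/c_N\right)\norma{\nabla\pt_u f}_{\L\infty}\tv(u)=NW_N\norma{\nabla\pt_u f}_{\L\infty}\tv(u)$ to the rate, which (since $W_N\le 1$) stays below $\kappa_0$, so the theorem as stated survives the correction. Finally, be careful with your approximation layer: passing to the limit from regularized fluxes cannot be done with (\ref{eq:kru}), which compares solutions of the \emph{same} equation, and invoking Theorem \ref{thm:stab} instead is circular since it presupposes the very TV bound being proved; vanishing viscosity works, but the paper's route through Proposition \ref{prop:tv} avoids the issue entirely.
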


The proof of this theorem is based on a good representation formula for the total variation and  on the doubling variable method and is detailed in  \cite{Lecureux}.

\begin{remark}\label{rem:opt}
In some cases, we recover  known estimates.  
\begin{itemize}
\item When $f$  depends only on  $u$ and $F=0$, we have a result similar to the one that was already known: $\tv(u(t))\leq \tv(u_0)$.
\item When $f$ and $F$  do not depend on $u$, the equation reduces in fact  to the ODE $\pt_t u =(F-\div f)(t,x)$, whose solution writes 
 \[
 u(t,x)=u_0(x)+\int_0^t(F-\div f)(\tau,x)\d{\tau}\,.
 \] 
 Meanwhile, the bound above reduces to 
 \[
 \tv(u(t))\leq \tv(u_0)+NW_N \int_0^t \int_{\reali^N} \modulo{(F-\div f)(\tau,x)}\d{\tau}
 \]
 which is essentially what we expected, up to the coefficient $NW_N$.
 \end{itemize}
\end{remark}

\begin{proof}
Let us admit the following proposition:
\begin{proposition}\label{prop:tv}
Let $\mu \in \Cc\infty(\rpic;\rpic)$ such that $\norma{\mu}_{\L1}=1$ and $\mu'<0 $  on $\rpis$. Let $u\in \Lloc1(\reali^N, \reali)$. We define  $\mu_\lambda(x)=\frac{1}{\lambda^N}\mu\left(\frac{\norma{x}}{\lambda}\right)$. If there exists  $C_0>0$ such that $\forall \lambda>0$, 
\[
\mathcal{I}(\lambda)=\frac{1}{\lambda}\int_{\reali^N}\int_{\reali^N} \modulo{u(x+y)-u(x)}\mu_{\lambda}(y)\mathrm{d}{x}\mathrm{d}{y}\leq C_0,
\] 
then $u\in \BV(\reali^N, \reali)$ and
\[
C_1\,\tv(u)=\lim_{\lambda\to 0}\mathcal{I}(\lambda)\leq C_0.
\]
with $C_1=\int_{\reali^N} \modulo{y_1}\mu(\norma{y})\mathrm{d}{y}$.
\end{proposition}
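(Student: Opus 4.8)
I would prove the statement in two stages: first that $u\in\BV(\reali^N,\reali)$ with $\tv(u)\le C_0/C_1$, and then, knowing $u$ is $BV$, identify the limit of $\mathcal{I}(\lambda)$ exactly. The first move is the change of variables $y=\lambda z$, which recasts the definition as
\[
\mathcal{I}(\lambda)=\int_{\reali^N}\mu(\norma{z})\,\Delta_\lambda(z)\,\d{z}\,,\qquad \Delta_\lambda(z):=\frac1\lambda\int_{\reali^N}\modulo{u(x+\lambda z)-u(x)}\,\d{x}\,,
\]
together with the elementary fact that, since $z\mapsto\mu(\norma{z})$ is rotationally invariant, for every unit vector $e\in\reali^N$,
\[
\int_{\reali^N}\modulo{z\cdot e}\,\mu(\norma{z})\,\d{z}=\int_{\reali^N}\modulo{z_1}\,\mu(\norma{z})\,\d{z}=C_1\,.
\]
Note also $\int_{\reali^N}\norma{z}\,\mu(\norma{z})\,\d{z}<\infty$ since $\mu$ is compactly supported. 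For a function $w$ I write $\mathcal{I}_w$ for the quantity obtained by replacing $u$ by $w$ in the definition of $\mathcal{I}$, so that $\mathcal{I}=\mathcal{I}_u$.

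\emph{Stage 1: $u\in\BV$.} I would first settle the smooth case. If $w$ is smooth, then $\tfrac1\lambda(w(x+\lambda z)-w(x))\to\nabla w(x)\cdot z$ pointwise, so Fatou's lemma (applied in $x$ and then in $z$, both integrands being nonnegative) gives
\[
\liminf_{\lambda\to0}\mathcal{I}_w(\lambda)\ \geq\ \int_{\reali^N}\mu(\norma{z})\int_{\reali^N}\modulo{\nabla w(x)\cdot z}\,\d{x}\,\d{z}\ =\ C_1\norma{\nabla w}_{\L1}\,,
\]
the last equality using the rotational identity pointwise in $x$ with $e=\nabla w(x)/\modulo{\nabla w(x)}$. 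To reach general $u$, I would mollify, $u_\epsilon=u*\zeta_\epsilon$ with a standard nonnegative kernel $\zeta_\epsilon$. By convexity of $\modulo{\cdot}$ (Jensen) and translation invariance of the inner $x$-integral one obtains $\mathcal{I}_{u_\epsilon}(\lambda)\le\mathcal{I}(\lambda)\le C_0$ for all $\lambda,\epsilon$. Combining with the smooth bound applied to $w=u_\epsilon$ yields $C_1\norma{\nabla u_\epsilon}_{\L1}\le C_0$, i.e. $\tv(u_\epsilon)\le C_0/C_1$ uniformly in $\epsilon$. Since $u_\epsilon\to u$ in $\Lloc1$ and $\tv$ is lower semicontinuous for $\Lloc1$ convergence, this proves $u\in\BV(\reali^N,\reali)$ with $\tv(u)\le C_0/C_1$.

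\emph{Stage 2: the exact limit.} Now that $u\in\BV$, write $Du=\sigma\,|Du|$ with $\modulo{\sigma}=1$ $|Du|$-almost everywhere. I would invoke the classical directional difference-quotient formula for $BV$ functions,
\[
\lim_{\lambda\to0^+}\Delta_\lambda(z)=\int_{\reali^N}\modulo{z\cdot\sigma(x)}\,\d{|Du|(x)}\qquad\text{for each fixed }z\,,
\]
together with the uniform domination $\Delta_\lambda(z)\le\norma{z}\,\tv(u)$ coming from the standard $BV$ translation estimate $\norma{u(\cdot+h)-u}_{\L1}\le\modulo{h}\,\tv(u)$, whose majorant $\norma{z}\,\tv(u)\,\mu(\norma{z})$ is integrable in $z$. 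Dominated convergence then gives
\[
\lim_{\lambda\to0}\mathcal{I}(\lambda)=\int_{\reali^N}\Big(\int_{\reali^N}\modulo{z\cdot\sigma(x)}\,\mu(\norma{z})\,\d{z}\Big)\d{|Du|(x)}=C_1\,\tv(u)\,,
\]
the inner integral equalling $C_1$ by the rotational identity with $e=\sigma(x)$. Since $\mathcal{I}(\lambda)\le C_0$ for every $\lambda$, letting $\lambda\to0$ yields $C_1\tv(u)\le C_0$, completing the proof.

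\emph{Main obstacle.} The part requiring genuine care is the directional formula of Stage 2: establishing $\Delta_\lambda(z)\to\int_{\reali^N}\modulo{z\cdot\sigma}\,\d{|Du|}$ rests on the fine structure of the measure $Du$ and is the true $BV$ input, whereas the mollification, Fatou and lower-semicontinuity steps of Stage 1 are routine. Should one prefer a self-contained argument, Stage 2 can be replaced by the two separate inequalities $\liminf_\lambda\mathcal{I}(\lambda)\ge C_1\tv(u)$ (via lower semicontinuity, approximating $u$ by smooth functions) and $\limsup_\lambda\mathcal{I}(\lambda)\le C_1\tv(u)$ (via a direction-sensitive refinement of the translation estimate applied to $Du$).
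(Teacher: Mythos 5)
You cannot really be compared against the paper on this statement, because the paper contains no proof of Proposition \ref{prop:tv}: it is introduced inside the proof of Theorem \ref{teo:tv} with the words ``Let us admit the following proposition,'' all details being deferred to the reference \cite{Lecureux}. Judged on its own, your argument is correct. Stage 1 (the smooth case via Fatou and the rotational identity, then $\mathcal{I}_{u_\epsilon}(\lambda)\le\mathcal{I}(\lambda)$ by convexity and translation invariance, then lower semicontinuity of $\tv$ under $\Lloc1$ convergence) is a complete, elementary proof that $u\in\BV(\reali^N,\reali)$ with $C_1\tv(u)\le C_0$. Stage 2 correctly identifies the limit: the uniform bound $\Delta_\lambda(z)\le\norma{z}\,\tv(u)$, with $z\mapsto\norma{z}\,\tv(u)\,\mu(\norma{z})$ integrable, legitimizes dominated convergence, and Tonelli plus the rotational identity with $e=\sigma(x)$ yields $C_1\tv(u)$; since $\mathcal{I}(\lambda)\le C_0$ for every $\lambda$, the conclusion follows. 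The only external input is the classical directional difference-quotient formula $\lim_{\lambda\to0}\frac{1}{\lambda}\norma{u(\cdot+\lambda z)-u}_{\L1}=\modulo{z\cdot Du}(\reali^N)$, which you rightly flag as the genuine $\BV$ ingredient; as you indicate, it can itself be re-derived with your Stage 1 toolkit (mollification gives the upper bound on translations, distributional convergence of the difference quotients together with lower semicontinuity of the mass of measures gives the lower bound), so the proof can be made fully self-contained. Two side remarks: your proof never uses the hypothesis $\mu'<0$, which the paper needs only afterwards in the proof of Theorem \ref{teo:tv} (for the estimate involving $\mu_\lambda'$); and the hypotheses on $\mu$ are, strictly speaking, inconsistent as stated (a nonnegative, compactly supported $\mu$ cannot be strictly decreasing on all of $\rpis$), but your argument uses only $\mu\ge0$, $\mu\not\equiv0$ and $\int_{\reali^N}\norma{z}\,\mu(\norma{z})\d{z}<\infty$, so it is robust to any reasonable repair of the statement.
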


Hence, we introduce:
\[
\mathcal{F}(T,\lambda)=\int_0^T\int_{\reali^N}\int_{B(x_0,R+M(T_0-t))}  \modulo{u(x+y)-u(x)}\mu_{\lambda}(y)\mathrm{d}{x}\,\mathrm{d}{y}\,\mathrm{d}t\,,
\] 
our goal being to estimate this quantity using the Kru\v zkov doubling variable method.

\textbf{Doubling variable method.}
Let us  denote $u = u(t,x)$ and $v = u(s,y)$ for $(t,x), (s,y) \in \rpis \times
  \reali^N$. For all $k,l \in \reali$ and all test-function
  $\phi = \phi(t,x,s,y)\in\Cc1 \left((\rpis \times \reali^N)^2;
    \rpic \right)$, we have, by definition of weak entropy solution (see Definition  \ref{def:wes})
 \begin{equation}
    \label{eq:utv}
    \begin{array}{r}
      \displaystyle
      \int_{\rpis}\int_{\reali^N} 
      \left[
        (u-k) \, \pt_t \phi
        +
        \left( f(t,x,u) - f(t,x,k) \right) \nabla_x \phi
        +
        \left(F(t,x,u)-\div f(t,x,k)\right)\phi
      \right]
     \\
     \times
      \mathrm{sign}(u-k) \,
      \mathrm{d}x \, \mathrm{d}t
      \geq
      0
    \end{array}
  \end{equation}
 and
  \begin{equation}
    \label{eq:vtv}
    \begin{array}{r}
      \displaystyle
      \int_{\rpis} \int_{\reali^N} 
      \left[
        (v-l) \, \pt_s \phi
        +
        \left( f(s,y,v) - f(s,y,l) \right) \nabla_y \phi
        +
        (F(s,y,v)-\div f(s,y,l))\phi
      \right]
           \\
     \times
      \mathrm{sign}(v-l) \,
      \mathrm{d}y \, \mathrm{d}s
      \geq
      0\,.
    \end{array}
  \end{equation}
Let us choose $k = v(s,y)$ in~(\ref{eq:utv}) and integrate in
  $(s,y)$. Similarly, we choose $l = u(t,x)$ in~(\ref{eq:vtv}) and we integrate in $(t,x)$. 
Furthermore, we choose $\phi(t,x,s,y)=\Psi(t-s, x-y)\Phi(t,x)$ and we summate to get
  \begin{equation}
    \label{eq:sumtv}
   \begin{array}{rc}
  \displaystyle
      \int_{\rpis} \int_{\reali^N}
      \int_{\rpis} \int_{\reali^N} 
      \mathrm{sign}(u-v)
      \bigg[
      (u-v) \, \Psi \, \pt_t \Phi +
      \left( f(t,x,u) - f(t,x,v) \right) \cdot
      \left( \nabla \Phi \right) \Psi &
      \\
      +
      \left( f(s,y,v) - f(s,y,u) - f(t,x,v) + f(t,x,u) \right) \cdot
      \left( \nabla \Psi \right) \Phi&
      \\
      +
      \left(F(t,x,u) - F(s,y,v) + \div f(s,y,u) - \div f(t,x,v) \right) \phi
      \bigg]
      \mathrm{d}x \, \mathrm{d}t \, \mathrm{d}y \, \mathrm{d}s
      &\geq
      0 \,.
    \end{array}
  \end{equation}
We choose now $\mu$ and $\nu$ unit approximations and 
$$\Psi(t-s,x-y)=\frac{1}{\lambda^N}\mu\left(\frac{x-y}{\lambda}\right)\,\frac{1}{\eta} \nu\left(\frac{t-s}{\eta}\right)\,,$$
so that $\Psi$ converges to a Dirac in space and time  when $\lambda, \eta \to 0$;  and we choose  $\Phi$ such that $\Phi\rightarrow\mathbf{1}_{[0,T]\times B(x_0, R+M(T-t))}$ when $\theta$, $\epsilon\to 0$.
The last step consists in estimating the integral (\ref{eq:sumtv}) when $\eta, \theta, \epsilon\to 0$, keeping the $\lambda$ fixed to make appear $\mathcal{F}$.

\textbf{Estimate on $\mathcal{F}$.}
By the doubling variable method, we obtain
\begin{align}
\pt_T \mathcal{F}(T,\lambda)\leq& \pt_T\mathcal{F}(0,\lambda)+C\lambda \left(\pt_\lambda \mathcal{F}(T,\lambda)+\frac{N}{\lambda} \mathcal{F}(T, \lambda)\right) + C' \mathcal{F}(T,\lambda) +\lambda\int_0^T A(t)\mathrm{d}t\,,\label{eq:1}
\end{align}
where
\begin{align*}
 A(t)&=M_1\int_{\reali^N}\norma{\nabla(F-\div\, f)(t,x,\cdot)}_{\L\infty(u)} \d{x},&M_1&=\int_{\reali^N} \norma{y}\mu(\norma{y})\mathrm{d}{y},\\
  C' &= N\norma{\nabla\pt_u f}_{\L\infty} +\norma{\pt_u F}_{\L\infty}\,. &C&=\norma{\nabla\pt_u f}_{\L\infty}\,.
\end{align*}

We use  the hypothesis $u_0\in \BV(\reali^N, \reali)$ to get $\frac{1}{\lambda}\pt_T \mathcal{F}(0,\lambda)\leq M_1 \tv(u_0)$, and we integrate with respect to time, obtaining:
\begin{equation*}
0\leq \frac{M_1 }{C}\tv(u_0) +\pt_\lambda \mathcal{F}(T,\lambda) +\frac{\alpha(T)}{\lambda}\mathcal{F}(T,\lambda) +\frac{1}{C}\int_0^TA(t)\mathrm{d}t\,,
\end{equation*}
where
$$\alpha(T)= N+\frac{C'}{C}-\frac{1}{C\,T}\rightarrow_{T\to 0} -\infty\,.$$  
Next, we choose $T$  so that $\alpha<-1$ and we integrate on $[\lambda,+\infty[$. 
We obtain:
\[
\mathcal{F}(T,\lambda)\leq \frac{\lambda}{-\alpha-1} \frac{M_1}{C} \,\tv(u_0) +\frac{\lambda}{C(-\alpha-1)}\int_0^T A(t)\mathrm{d}t\,.
\]
Besides, note that, thanks to the shape of $\mu$, we can find $K>0$ so that
\begin{equation*}
\pt_\lambda\mathcal{F}(T,\lambda) +\frac{N}{\lambda}\mathcal{F}=\int_0^T\int_{\reali^N}\int_{B(x_0,R+M(T_0-t))}  \modulo{u(x+y)-u(x)}\mu'_{\lambda}(y)\mathrm{d}{x}\mathrm{d}{y}\mathrm{d}t \;\leq\; \quad\frac{K}{2\lambda} \mathcal{F}(T,2\lambda)\,. 
\end{equation*}
Hence, we finally obtain
\begin{eqnarray*}
\frac{1}{\lambda} \pt_T \mathcal{F}(T,\lambda) &
\leq & \frac{1}{\lambda} \pt_T\mathcal{F}(0,\lambda) +  \frac{CK+C'}{(-\alpha-1)C}  \left(M_1\tv(u_0) +\int_0^T A(t)\mathrm{d}t\right) +\int_0^T A(t)\mathrm{d}t\,.
\end{eqnarray*}
By Proposition \ref{prop:tv} we have then $u(t)\in \BV(\reali^N, \reali)$.We can now divide (\ref{eq:1})  by $\lambda$ and make $\lambda\to 0$. Applying a standard Gronwall-type argument, we obtain the desired inequality.
\end{proof}

\subsection{Scalar conservation law with a non-local flow}
Let us consider equation (\ref{eq:panic}) with initial condition $\rho_0\in (\L1\cap\L\infty\cap\BV)(\reali^N, \reali)$. We want to prove 
Theorem \ref{thm:panicK} and Theorem \ref{thm:Gdiff}. Note that this proof adapt smoothly to prove Theorem \ref{thm:order} and \ref{thm:panicK2}. 
We use here the scheme proposed in Section \ref{sec:intro}, choosing $X=\C0([0,T], \L1(\reali^N, \reali))$ at point \textbf{(a)}. Point  \textbf{(b)} is then satisfied by Kru\v zkov theorem (see Theorem  \ref{thm:kru}) and point \textbf{(c)} is satisfied by the stability estimate given by Theorem \ref{thm:stab} \& \ref{teo:tv}.

This scheme of proof can be also used for the other models: the proof of Theorem \ref{thm:order} for an orderly crowd is based on it, as well as Theorem \ref{thm:panicK2} giving existence and uniqueness for the several populations models (\ref{eq:panic2}) and (\ref{eq:order2}). For more details on these various theorems, refer to \cite{ColomboLecureux, ColomboGaravelloMercier}.

We use also this scheme of proof in order to prove existence and uniqueness of solution to equation (\ref{eq:groupindiv}), where we need furthermore stability results on the ODE (see \cite{ColomboMercier}).

\subsubsection{Proof of Theorem \ref{thm:panicK}}\label{sec:proofK}

Let us introduce the space
$$X=\C0([0,T],\L1(\reali^N;\rpic))\,.$$ 
Let $r\in X$  
and $\rho\in X$ be the solution of  (\ref{eq:fix}) given by Kru\v zkov Theorem (see Theorem \ref{thm:kru}). Note that, $u=0$ being a solution to (\ref{eq:fix}),  $\rho_0\geq 0$ implies $\rho(t)\geq 0$ for all $t\geq 0$ thanks to the maximal principle on scalar conservation laws \cite[Theorem 3]{Kruzkov}.

\begin{remark}
For the model (\ref{eq:order}) of orderly crowd, we can use the same king of argument to have a uniform bound on the $\L\infty$ norm. Indeed, if $u=1$ is a solution to the equation 
\begin{equation}\label{eq:borne1}
\pt_t \rho +\Div (\rho v(\rho) \vec w(x))=0
\end{equation}
 then $\rho_0\leq 1$ implies $\rho(t)\leq 1$ for all $t\geq 0$. This fact allows us in this case  to consider the space $X=\C0([0,T], \L1(\reali^N, [0,1]) )$.
The same fact is also true for each density of the model (\ref{eq:order2}) of orderly crowd with several populations. Indeed,  the interaction between the equations is only in the non-local term; so, if we fix the nonlocal term, we have to deal with the same kind of scalar conservation law as (\ref{eq:borne1}). In particular, for each density $\rho_i$ the maximum principle gives us 
$$0\leq \rho_{0,i}\leq 1 \qquad\Rightarrow \qquad 0\leq \rho_i(t)\leq 1 \quad \textrm{ for all }t\geq 0\,.$$
So finally, if there are $k$ populations, we have the following uniform bound on the total density  density 
$$
0\leq \sum_{i=1}^k \rho_i(t)\leq k\,.
$$
\end{remark}
%
%
%

Let us  introduce the application
$$\mathscr{Q}  : r \in X\rightarrow \rho\in X\,.$$
For  $r_1, r_2$, we get by Theorem \ref{thm:stab} and Theorem \ref{teo:tv},
\[
\norma{\mathscr{Q}(r_1)-\mathscr{Q}(r_2)}_{\L\infty([0,T],\L1)} \leq f(T)\norma{r_1-r_2}_{\L\infty([0,T],\L1)}\,,
\]
where $f$  is  continuous, increasing, $f(0)=0$ and $f\to_{T\to \infty}\infty$. Consequently, we can choose $T$ small enough so that $f(T)=1/2$ and apply the Banach fixed point theorem for a small time. 
Iterating the process in time, we obtain global in time existence.

\subsubsection{G\^ateaux derivative of the semi-group}\label{sec:proofGdiff}

We prove now that the semi-group obtained in Theorem \ref{thm:panicK} (or \ref{thm:panicK2}) is Gâteaux-differentiable with respect to the initial condition. The following result is obtained by the use of Kru\v zkov theory. 

Note that the Gâteaux-differentiability is also  possible for the more non-linear models  of orderly crowd (\ref{eq:order}) and (\ref{eq:order2}). Indeed an important tool below is that the equation (\ref{eq:panic}) preserves the regularity of the initial condition, which is no longer true for (\ref{eq:order}).

\begin{definition}
We say the application $S:\L1(\reali^N;\reali)\to \L1(\reali^N;\reali)$ is \emph{G\^ateaux differentiable  in $\L1$,  in $\rho_0\in \L1$, in the  direction $r_0\in \L1$}  if there exists a linear continuous application   $DS(\rho_0):\L1\to \L1$ such that
\[
\norma{ \frac{S(\rho_0+hr_0)-S(\rho_0)}{h}-DS(\rho_0)(r_0)}_{\L1}\to_{h\to 0} 0\,.
\] 
\end{definition}

We expect the Gâteaux derivative to be the solution of the linearized problem (\ref{eq:linearised}).
%

First, we prove that the linearized problem admits an entropic solution (see \cite{ColomboHertyMercier, ColomboLecureux}).
\begin{theorem}\label{thm:existlin}  
Assume that  $v\in (\C4\cap\W2\infty)(\reali ,\reali)$, $\vec \nu \in (\C3\cap\W21\cap\W2\infty)(\reali^N, \reali^N)$, $\eta \in (\C3\cap\W2\infty)(\reali^N, \rpic)$. Let $\rho\in \C0([0,T_{ex}[;\W1\infty\cap\W11(\reali^N, \reali))$, $r_0\in (\L1\cap\L\infty)(\reali^N;\reali)$. 
Then the linearized problem (\ref{eq:linearised}) with initial condition $r_0$ admits a unique entropic solution $ r\in \C0([0,T_{ex}[;\L1(\reali^N;\reali))$ and we denote $\Sigma_t^{\rho} r_0 =r(t,\cdot)$. 

If furthermore $r_0\in \W11(\reali^N, \reali)$, then for all $t\in [0,T_{ex}[$, $r(t)\in \W11(\reali^N, \reali)$.
\end{theorem}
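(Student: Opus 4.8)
The plan is to exploit that \eqref{eq:linearised} is \emph{linear} in the unknown $r$: since $\rho$ is given, the velocity $w(t,x)=v(\rho*\eta)\,\vec\nu(x)$ and the source $S(t,x)=-\Div(\rho\,v'(\rho*\eta)\,\vec\nu)$ are prescribed functions of $(t,x)$, so \eqref{eq:linearised} reads $\pt_t r+\Div(r\,w)=S$, equivalently the conservation law $\pt_t r+\Div(r\,w+b)=0$ with $b=\rho\,v'(\rho*\eta)\,\vec\nu$ and zero source. The first observation is that $w$ is far more regular than $\rho$: because $\eta\in\C3\cap\W2\infty$ and $\rho(t)\in\L1$, the convolutions $\rho*\eta$ and $\rho*\nabla\eta$ are bounded and Lipschitz in $x$, uniformly for $t$ in compact subsets of $[0,T_{ex}[$, so $w\in\C0([0,T];\W1\infty\cap\C1)$ with $\Div w\in\L\infty$. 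By contrast the source $S$ only sees $\nabla\rho$: expanding the divergence one finds $S\in\C0([0,T_{ex}[;\L1\cap\L\infty)$ (this uses exactly $\nabla\rho\in\L1\cap\L\infty$, i.e. $\rho\in\W1\infty\cap\W11$), but $S$ is neither continuous nor differentiable in $x$.

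For existence I would regularise $\rho$ by mollification, $\rho_n\to\rho$ in $\C0([0,T];\W1\infty\cap\W11)$, and apply the Kru\v zkov theorem (Theorem \ref{thm:kru}) to the smoothed problems, whose affine fluxes $r\,w_n+b_n$ now satisfy \textbf{(K)} — the only derivative that was missing, the second space derivative of the zero-order part $b_n$, exists once $\rho_n$ is smooth. This yields entropy solutions $r_n\in\C0([0,T];\L1)$ together with the contraction \eqref{eq:kru} (with $\gamma=0$, since the source is $r$-independent) and a uniform $\L\infty$ bound. To pass to the limit I would invoke the stability estimate of Theorem \ref{thm:stab} for the pairs $(r\,w_n+b_n,\,0)$ and $(r\,w_m+b_m,\,0)$: the flux difference $w_n-w_m$ tends to $0$ in $\L\infty$ and the divergence-form term $\Div(b_n-b_m)$ tends to $0$ in $\L1$ because $\rho_n\to\rho$ in $\W11$, so that, once a uniform bound on $\sup_\tau\tv(r_n(\tau))$ is available, $(r_n)$ is Cauchy in $\C0([0,T];\L1)$ and its limit $r$ is the sought entropy solution. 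Uniqueness is actually cleaner and needs no regularisation: the flux being affine in $r$, the Kru\v zkov entropy inequalities reduce to the weak formulation, and a weak solution of a continuity equation with \emph{Lipschitz} velocity $w$ (of bounded divergence) is unique, so the limit is independent of the mollifier.

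The one genuine difficulty is hidden in that middle term of Theorem \ref{thm:stab}, which carries the factor $\sup_\tau\tv(r_n(\tau))$: this \emph{cannot} be controlled through Theorem \ref{teo:tv}, whose hypothesis \textbf{(TV)} asks for $\nabla S$ and hence for $\nabla^2\rho$, a derivative we do not have. I would circumvent this by first proving the statement for $r_0\in\W11\cap\L\infty$, where a uniform gradient bound is obtained by the direct argument below, and then extending to a general $r_0\in\L1\cap\L\infty$ by density, using that the solution map $r_0\mapsto r$ is affine and non-expansive in $\L1$ (from \eqref{eq:kru} with $\gamma=0$), hence continuous from $\L1$ to $\C0([0,T];\L1)$. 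This density step is also precisely what upgrades the construction to arbitrary $r_0\in\L1\cap\L\infty$.

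Finally, for the $\W11$-propagation I would differentiate the equation in space rather than appeal to Theorem \ref{teo:tv}. The key remark is that $b=\rho\,v'(\rho*\eta)\,\vec\nu\in\C0([0,T_{ex}[;\W11)$ as soon as $\rho\in\W11$ — only \emph{one} derivative of $\rho$ intervenes. Each component $\pt_{x_i}r$ then solves, in the renormalised sense legitimate for the Lipschitz field $w$, a continuity equation $\pt_t(\pt_{x_i}r)+\Div(\pt_{x_i}r\;w)=-\Div\!\big(\pt_{x_i}b+r\,\pt_{x_i}w\big)$, whose right-hand side is the divergence of an $\L1$ vector field, since $\pt_{x_i}b\in\L1$ and $r\,\pt_{x_i}w\in\L1$ (because $r\in\L1$ and $\nabla w\in\L\infty$). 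The same solution operator that produced $r$ keeps $\pt_{x_i}r$ in $\C0([0,T];\L1)$, giving $r(t)\in\W11$ and the uniform gradient bound used above. The main obstacle throughout is exactly this regularity mismatch: the source of \eqref{eq:linearised} only involves $\nabla\rho$, so the nonlinear Kru\v zkov machinery over-differentiates the data, and the remedy is to lean on the \emph{linearity} and the \emph{divergence structure}, together with the fact that the transport velocity $w$ is Lipschitz irrespective of the roughness of $\rho$.
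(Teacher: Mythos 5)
Your reading of (\ref{eq:linearised}) as a genuinely linear problem with a prescribed source is faithful to the printed equation, but not to the equation the paper actually proves things about: the source must carry the factor $(r*\eta)$, i.e.\ it is $-\Div\big(\rho\,v'(\rho*\eta)\,(r*\eta)\,\vec\nu(x)\big)$. This is forced both by Theorem \ref{thm:Gdiff} (the G\^ateaux derivative of $\rho\mapsto\Div(\rho\,v(\rho*\eta)\vec\nu)$ in the direction $r$ produces exactly this term) and by the computation for $z_h=\rho+hr$ in Section \ref{sec:proofGdiff}, which is consistent only with that source. Consequently the linearized problem is still nonlocal in the unknown $r$, and the paper's (omitted) proof is the fixed-point scheme of Section \ref{sec:proofK}: freeze $r*\eta$, solve the frozen problem by Kru\v zkov's theorem, and contract in $\C0([0,T];\L1)$ via Theorems \ref{thm:stab} and \ref{teo:tv}. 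Relative to the intended equation, your argument addresses only the frozen problem, i.e.\ step \textbf{(b)} of that scheme; the contraction layer is missing. That part is repairable, but it is a real deviation from the paper.

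The unrepaired gap is precisely the point you flagged as the crux: the uniform bound on $\sup_\tau\tv(r_n(\tau))$, equivalently the $\W11$ bound. You rightly observe that Theorem \ref{teo:tv} is unusable here, since \textbf{(TV)} demands $\nabla(F-\div f)$ and hence $\nabla^2\rho$; but your substitute fails for the same reason. Differentiating the equation, $q_i=\pt_{x_i}r$ solves
\begin{equation*}
\pt_t q_i+\Div(q_i\,w)=-\Div\big(\pt_{x_i}b+r\,\pt_{x_i}w\big)\,,
\end{equation*}
and the right-hand side is only the \emph{distributional} divergence of an $\L1$ field: expanded, it contains $\nabla^2\rho$ (from $\Div\,\pt_{x_i}b$) and $\nabla r$ (from $\Div(r\,\pt_{x_i}w)$), neither of which is known to lie in $\L1$. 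The claim that ``the same solution operator that produced $r$ keeps $\pt_{x_i}r$ in $\C0([0,T];\L1)$'' is therefore unjustified: that operator propagates $\L1$ bounds when the source is an $\L1$ \emph{function} --- this is why $r$ itself stays in $\L1$, as $\Div b\in\L1$ once $\rho\in\W11$ --- but not when the source is merely $\Div$ of an $\L1$ field. The paper's own example after \textbf{(TV)} illustrates exactly this failure: $\pt_t u+\pt_x\cos x=0$, $u_0=0$, whose source is the divergence of a bounded field, has solution $u(t,x)=t\sin x$, which leaves $\L1$ and $\BV$ instantly. Since your existence proof for $r_0\in\W11\cap\L\infty$ uses this gradient bound to make $(r_n)$ Cauchy through the middle term of Theorem \ref{thm:stab}, and the general case rests on it by density, the gap undermines the whole construction, not only the final assertion of the theorem. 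Closing it requires either two derivatives of $\rho$ (which is what the hypothesis $\rho_0\in\W2\infty\cap\W21$ in Theorem \ref{thm:Gdiff} is there to provide, the equation preserving this regularity), or a different treatment of the frozen linear problem --- its characteristics/Duhamel representation yields existence, uniqueness and $\L1$ stability with no TV bound on $r$ at all, in the spirit of Section \ref{sec:ot}. To be fair, the paper's one-line proof (``similar to Theorem \ref{thm:panicK}'') glosses over this same difficulty, and you deserve credit for detecting it; but your way around it does not work.
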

The proof is similar to the one of Theorem \ref{thm:panicK} so we omit it.
 
\begin{theorem}
Assume that  $v\in (\C4\cap\W2\infty)(\reali ,\reali)$, $\vec \nu \in (\C3\cap\W21\cap\W2\infty)(\reali^N, \reali^N)$, $\eta \in (\C3\cap\W2\infty)(\reali^N, \rpic)$. Let $\rho_0\in (\W1\infty\cap\W21)(\reali^N, \reali^+)$, $r_0\in (\W11\cap\L\infty)(\reali^N, \reali)$.

Then, for every $t\geq 0$, the local semigroup of the pedestrian traffic problem is $\L1$ G\^ateaux differentiable in the direction $r_0$ and
\[
 DS_t(\rho_0)(r_0)=\Sigma_t^{S_t \rho_0} r_0\,.
\]
\end{theorem}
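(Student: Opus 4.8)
The plan is to fix the direction $r_0$ and, for small $h\neq 0$, compare the difference quotient
\[
w_h=\frac{S_t(\rho_0+hr_0)-S_t\rho_0}{h}
\]
with the solution $r=\Sigma_t^{S_t\rho_0}r_0$ of the linearized problem (\ref{eq:linearised}), showing $\norma{(w_h-r)(t)}_{\L1}\to 0$ as $h\to 0$ by means of the stability estimate of Theorem \ref{thm:stab}. Write $\rho=S_t\rho_0$ and $\rho_h=S_t(\rho_0+hr_0)$. Note first that $r$ exists and stays in $\W11$ thanks to Theorem \ref{thm:existlin}, whose hypothesis $\rho\in\C0([0,T];\W1\infty\cap\W11)$ is guaranteed by the regularity propagation of (\ref{eq:panic}) starting from $\rho_0\in\W21\cap\W2\infty$. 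Subtracting the two copies of (\ref{eq:panic}), dividing by $h$, and using the splitting $\rho_h v(\rho_h*\eta)-\rho v(\rho*\eta)=(\rho_h-\rho)v(\rho_h*\eta)+\rho\left(v(\rho_h*\eta)-v(\rho*\eta)\right)$ together with the mean value theorem $v(\rho_h*\eta)-v(\rho*\eta)=h\,v'(\xi_h)\,(w_h*\eta)$ for some $\xi_h$ lying between $\rho*\eta$ and $\rho_h*\eta$, one finds that $w_h$ solves
\[
\pt_t w_h+\Div(w_h\, v(\rho_h*\eta)\,\vec\nu)=-\Div(\rho\, v'(\xi_h)\,(w_h*\eta)\,\vec\nu),\qquad w_h(0)=r_0.
\]
For each fixed $h$ this is a scalar conservation law whose flow is linear in the unknown and whose source, once the nonlocal factor $w_h*\eta$ is regarded as a known function of $(t,x)$, is independent of the unknown; the same holds for (\ref{eq:linearised}) solved by $r$, with coefficient $v(\rho*\eta)$, multiplier $v'(\rho*\eta)$ and nonlocal factor $r*\eta$. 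Crucially, both problems start from the \emph{same} datum $r_0$.

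The second step is to collect the uniform a priori bounds needed to feed Theorem \ref{thm:stab}. From Theorem \ref{thm:panicK} I get uniform $\L\infty$ bounds on $\rho_h$ and $\rho$, and the Lipschitz dependence on the initial datum (Theorem \ref{thm:kru}) yields $\norma{\rho_h-\rho}_{\L1}\leq Ch$, hence $\norma{w_h}_{\L1}\leq C$ uniformly in $h$ and $t\in[0,T]$. Because (\ref{eq:panic}) propagates $\W11\cap\W1\infty$ regularity and $\rho_0\in\W21\cap\W2\infty$, $r_0\in\W11$, I expect to control $\sup_{\tau\in[0,T]}\tv(w_h(\tau))\leq C$ uniformly in $h$, applying Theorem \ref{teo:tv} to the linear equation above. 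This uniform-in-$h$ total variation bound on the difference quotient is the main obstacle: it is what prevents the flow-difference term in Theorem \ref{thm:stab} from blowing up as $h\to0$, and it is the step that genuinely uses the strong regularity hypotheses and the fact (emphasized before Theorem \ref{thm:existlin}) that (\ref{eq:panic}) preserves the regularity of the initial condition.

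Finally, apply Theorem \ref{thm:stab} with $u=w_h$, $v=r$. Since the two data coincide the initial term vanishes, and $\kappa=0$ because the frozen sources do not depend on the unknown, so the estimate reduces to a flow-difference term and a source-difference term. The flow-difference term is bounded by $\sup_\tau\tv(w_h(\tau))\int_0^t\norma{v(\rho_h*\eta)-v(\rho*\eta)}_{\L\infty}\d\tau\leq C\,T\,\norma{v'}_{\L\infty}\norma{\eta}_{\L\infty}\norma{\rho_h-\rho}_{\L1}\leq Ch$, which tends to $0$. For the source-difference term I would split
\[
v'(\xi_h)(w_h*\eta)-v'(\rho*\eta)(r*\eta)=v'(\xi_h)\big((w_h-r)*\eta\big)+\big(v'(\xi_h)-v'(\rho*\eta)\big)(r*\eta),
\]
and, after moving the divergences onto the smooth factors $\rho,\vec\nu,\eta$, bound the second piece by $O(h)$ (since $\modulo{\xi_h-\rho*\eta}\leq\modulo{\rho_h*\eta-\rho*\eta}\leq Ch$ using $v\in\W2\infty$) while the first piece, involving only $(w_h-r)*\eta$, is bounded by $C\int_0^t\norma{(w_h-r)(\tau)}_{\L1}\d\tau$ (putting the derivatives on $\nabla\eta$ and using $\eta\in\W2\infty$ and the uniform bounds on $\rho,\vec\nu$). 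Setting $z_h=w_h-r$, this gives $\norma{z_h(t)}_{\L1}\leq\epsilon(h)+C\int_0^t\norma{z_h(\tau)}_{\L1}\d\tau$ with $\epsilon(h)\to0$; a Gronwall argument then yields $\norma{z_h(t)}_{\L1}\leq\epsilon(h)e^{Ct}\to0$, which is precisely the claimed G\^ateaux differentiability with $DS_t(\rho_0)(r_0)=\Sigma_t^{S_t\rho_0}r_0$.
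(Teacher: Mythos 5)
Your strategy---write down the PDE satisfied by the difference quotient $w_h=(\rho_h-\rho)/h$ and compare it directly with the linearized solution $r$ via Theorem \ref{thm:stab} and Gronwall---is a genuinely different route from the paper's. The paper instead forms the approximate solution $z_h=\rho+hr$, observes that it solves the equation with flux coefficient $\left(v(\rho*\eta)+hv'(\rho*\eta)(r*\eta)\right)\vec\nu$ up to an $O(h^2)$ source, and compares $\rho_h$ with $z_h$, which have the \emph{same} initial datum $\rho_0+hr_0$; the term $\frac1h\norma{\rho_h-z_h}_{\L1}$ appearing on the right-hand side is absorbed by taking $T$ small ($F(T)=1/2$), and the Taylor remainder is controlled through $\frac1h\norma{\rho_h-\rho}_{\L1}^2=O(h)$, once $\frac1h\norma{\rho_h-\rho}_{\L\infty(\L1)}$ is shown to be bounded. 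The point of that decomposition is that every object fed into Theorems \ref{thm:stab} and \ref{teo:tv} is a bona fide solution ($\rho_h$, $\rho$, $z_h$, $r$), whose $\L\infty$ and $\tv$ norms are uniformly controlled by the basic theory; no $\L\infty$ or $\tv$ bound on a difference quotient is ever required.

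Your route, by contrast, needs such bounds on $w_h$ itself, and this is exactly where the proposal has genuine gaps. First, $\norma{\rho_h-\rho}_{\L1}\leq Ch$ cannot be deduced from Theorem \ref{thm:kru}: that theorem compares two solutions of the \emph{same} equation, while $\rho_h$ and $\rho$ are entropy solutions of \emph{different} frozen-coefficient equations (each flux carries its own nonlocal term). The correct argument---the one the paper uses---is Theorem \ref{thm:stab} applied to the pair $(\rho_h,\rho)$, whose flow difference is itself bounded by $\norma{\rho_h-\rho}_{\L1}$, followed by Gronwall. Second, the uniform-in-$h$ bound $\sup_{\tau}\tv\left(w_h(\tau)\right)\leq C$, which you yourself identify as the main obstacle, is only asserted (``I expect to control''), not proved; to run Theorem \ref{teo:tv} on the $w_h$-equation you need, besides the uniform $\L1$ bound, a uniform-in-$h$ $\L\infty$ bound on $w_h$ (the quantity $U_t$ there) and the propagation of $\W21$ regularity for $\rho$. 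Third, that same $\L\infty$ bound enters Theorem \ref{thm:stab} through the constant $V=\max\left(\norma{w_h}_{\L\infty},\norma{r}_{\L\infty}\right)$: since $\div(f-g)=u\,\div\left((v(\rho_h*\eta)-v(\rho*\eta))\vec\nu\right)$ is linear in $u$, the source-difference term contains a contribution of size $V\cdot O(h)$, and with only the trivial bound $\norma{w_h}_{\L\infty}\leq Ch^{-1}$ this is $O(1)$ and does not vanish. These gaps are fillable---e.g.\ the $\L\infty$ bound follows by characteristics for the frozen linear equation, using $\rho\in\W1\infty$---but as written the proof is incomplete precisely at its crux, difficulties which the paper's choice of comparing $\rho_h$ with $z_h=\rho+hr$ is designed to bypass.
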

We prove this theorem following the same sketch of proof as above.

\begin{proof}
Let $\rho, \rho_h$ the solutions of the problem  (\ref{eq:panic}) with initial condition $\rho_0, \rho_0+hr_0$. 

Let $r$ be the solution of the linearized problem (\ref{eq:linearised}), $r(0)=r_0$ and let $z_h=\rho+hr$ which then satisifies
\[
\pt_t z_h+\Div\left(z_h(v(\rho*\eta)+hv'(\rho*\eta )(r*\eta))\vec \nu(x) \right) =h^2\Div(rv'(\rho*\eta)(r*\eta)\vec\nu(x) ) \,,\quad z_h(0)=\rho_0+h r_0\,.
\]    

Then we use Theorem \ref{thm:stab} \& \ref{teo:tv}  to compare $\rho_h$ et $z_h$. We get:
\begin{align*}
\frac{1}{h}\norma{\rho_h - z_h}_{\L\infty([0,T[,\L1)}\leq& F(T)\left(\frac{1}{h}\norma{\rho_h- \rho}_{\L\infty(\L1)}^2 +\frac{1}{h}\norma{\rho_h-z_h}_{\L\infty(\L1)} \right)\\
&+h C(\beta) T e^{C(\beta) T}\norma{r}_{\L\infty(\W11)}\norma{r}_{\L\infty(\L1)}\,,
\end{align*}
with $F$ increasing and $F(0)=0$. Then, we choose $T$ small enough so that $F(T)=1/2$. 

Besides, applying Theorem \ref{thm:stab} to compare $\rho$ and $\rho_h$ and using Gronwall lemma, we obtain that $\frac{1}{h}\norma{\rho_h-\rho}_{\L\infty(\L1)}$ remains bounded when $h\to 0$. 

Finally, we can take the limit $h\to 0$ and obtain the desired result.
\end{proof}

\subsubsection{Extrema of a cost functional}
We are now able to characterize the extrema of a given cost functional.
Let $J$ be a cost functional such that
\[
    J (\rho_0 )
    =
    \int_{\reali^N}
    f \left( S_t \rho_0 \right) \, \psi(t,x)
    \mathrm{d}{x} \,.
  \]

\begin{proposition}
  Let $f\in \C{1,1}(\reali;\rpic)$ and $\psi \in
  \L\infty(\reali^+\times \reali^N; \reali)$. Assume  $S
  \colon [0,T] \times (\L1 \cap \L\infty) (\reali^N;\reali) \to (\L1 \cap
  \L\infty) (\reali^N;\reali)$  is  $\L1$ G\^ateaux
  differentiable.
  
If $\rho_0 \in (\L1
  \cap \L\infty) (\reali^N;\reali)$  is solution of the problem:
  \begin{displaymath}
    \textrm{Find } \min_{\rho_0} {J}( \rho_0  )
    \mbox{ such that}  \left\{ S_t\rho_0
    \textrm{ is solution of problem (Traffic)}\right\}.
  \end{displaymath}
  then, for all $r_0 \in (\L1 \cap \L\infty) (\reali^N; \reali)$
  \[
    \int_{\reali^N} f'(S_t \rho_0) \, \Sigma_t^{\rho_0} r_0 \, \psi(t,x)
    \, \mathrm{d}x
    =
    0 \,.
  \]
\end{proposition}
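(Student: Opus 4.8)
The plan is to derive the stated necessary condition from the first-order optimality of $J$ at the minimizer $\rho_0$, combined with the G\^ateaux differentiability of the semigroup $S$. The key idea is that at a minimum the directional derivative of $J$ in every admissible direction $r_0$ must vanish, and then to unfold what this directional derivative is in terms of the linearized semigroup $\Sigma_t^{\rho_0}$.

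First I would fix an arbitrary direction $r_0 \in (\L1\cap\L\infty)(\reali^N;\reali)$ and consider the scalar function $h \mapsto J(\rho_0 + h r_0)$. Since $\rho_0$ is a minimizer over the admissible set and the set $(\L1\cap\L\infty)(\reali^N;\reali)$ is a vector space, the one-sided difference quotients satisfy $\frac{1}{h}\left(J(\rho_0+hr_0)-J(\rho_0)\right)\geq 0$ for $h>0$ and $\leq 0$ for $h<0$; hence, provided the limit exists, the derivative $\frac{d}{dh}J(\rho_0+hr_0)\big|_{h=0}=0$. So the real content is to show this derivative exists and to compute it via the chain rule.

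Next I would write out the difference quotient of $J$ explicitly,
\begin{align*}
\frac{J(\rho_0+hr_0)-J(\rho_0)}{h}
&=\int_{\reali^N}\frac{f(S_t(\rho_0+hr_0))-f(S_t\rho_0)}{h}\,\psi(t,x)\,\d{x}\,.
\end{align*}
The plan is to pass to the limit $h\to 0$ under the integral sign. Since $f\in\C{1,1}$, Taylor expansion gives $f(S_t(\rho_0+hr_0))-f(S_t\rho_0)=f'(S_t\rho_0)\bigl(S_t(\rho_0+hr_0)-S_t\rho_0\bigr)+O\bigl(|S_t(\rho_0+hr_0)-S_t\rho_0|^2\bigr)$, using the Lipschitz bound on $f'$ for the quadratic remainder. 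Dividing by $h$, the first term converges in $\L1$ to $f'(S_t\rho_0)\,\Sigma_t^{\rho_0}r_0$ by the G\^ateaux differentiability hypothesis (which identifies $DS_t(\rho_0)(r_0)=\Sigma_t^{\rho_0}r_0$), while the remainder term is controlled by $\tfrac1h\norma{S_t(\rho_0+hr_0)-S_t\rho_0}_{\L1}\cdot\norma{S_t(\rho_0+hr_0)-S_t\rho_0}_{\L\infty}$, the first factor staying bounded and the second tending to $0$ as $h\to 0$ by $\L1$ differentiability together with the $\L\infty$ bound from Theorem~\ref{thm:panicK}. Multiplying by $\psi\in\L\infty$ and integrating then yields
\[
\lim_{h\to0}\frac{J(\rho_0+hr_0)-J(\rho_0)}{h}=\int_{\reali^N}f'(S_t\rho_0)\,\Sigma_t^{\rho_0}r_0\,\psi(t,x)\,\d{x}\,.
\]

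Combining the two steps, the vanishing of the directional derivative at the minimizer gives precisely the claimed identity, for every $r_0\in(\L1\cap\L\infty)(\reali^N;\reali)$. The main obstacle is the justification of the limit exchange in the remainder term: one must make sure that $\tfrac1h\norma{S_t(\rho_0+hr_0)-S_t\rho_0}_{\L1}$ is bounded uniformly in $h$ (this is supplied by the Lipschitz continuity of $S_t$ in $\L1$, i.e. the stability estimate~\eqref{eq:kru}) and that the $\L\infty$ factor actually decays, which follows from the exponential $\L\infty$ bound of Theorem~\ref{thm:panicK} applied to both initial data and the $\L1$ convergence of the solutions. Once these uniform bounds are in place, dominated convergence handles the passage to the limit and the proof closes.
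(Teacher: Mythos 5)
The paper states this proposition without any proof, so there is no official argument to compare against; your route --- two-sided first-order optimality over the vector space $(\L1\cap\L\infty)(\reali^N;\reali)$, combined with a chain rule that uses the $\C{1,1}$ regularity of $f$ and the $\L1$ G\^ateaux differentiability of $S_t$ --- is clearly the intended one. Your optimality step is correct (the admissible set is the whole space, so both signs of $h$ are allowed), and so is the convergence of the linear term: $f'(S_t\rho_0)$ and $\psi$ are essentially bounded and the difference quotients converge in $\L1$ to $\Sigma_t^{\rho_0}r_0$. The flaw is in the quadratic remainder.

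You bound the remainder by $\frac1h\norma{\Delta_h}_{\L1}\cdot\norma{\Delta_h}_{\L\infty}$, where $\Delta_h := S_t(\rho_0+hr_0)-S_t\rho_0$, and claim that the second factor tends to $0$ ``by $\L1$ differentiability together with the $\L\infty$ bound from Theorem \ref{thm:panicK}''. This inference is invalid: $\L1$ convergence plus a uniform $\L\infty$ bound does not yield $\L\infty$ convergence, and for conservation laws the claim itself genuinely fails --- if $S_t\rho_0$ has a jump discontinuity whose location is shifted by the $O(h)$ perturbation of the nonlocal velocity field, then $\norma{\Delta_h}_{\L\infty}$ stays of order one for every small $h$; Theorem \ref{thm:panicK} only gives boundedness, $\sup_{\modulo{h}\leq 1}\norma{\Delta_h}_{\L\infty}\leq C$, not smallness. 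The step can be repaired without changing your structure, but one must exploit the $\L1$ convergence of the difference quotients themselves rather than of $\Delta_h$: given $h_n\to 0$, extract a subsequence along which $\Delta_{h_n}/h_n\to\Sigma_t^{\rho_0}r_0$ almost everywhere and $\modulo{\Delta_{h_n}}/h_n\leq g$ for a single $g\in\L1$ (possible precisely because the quotients converge in $\L1$). Then the remainder integrand $\modulo{\Delta_{h_n}}\cdot\frac{\modulo{\Delta_{h_n}}}{h_n}\,\modulo{\psi}$ is dominated by $2C\norma{\psi}_{\L\infty}\, g\in\L1$ and tends to $0$ almost everywhere, since $\modulo{\Delta_{h_n}}\leq h_n g\to 0$ a.e.; dominated convergence and the subsequence principle then show the remainder vanishes, and the rest of your argument goes through unchanged.
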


\section{Using the Optimal transport theory}\label{sec:ot}

In this section, we want to prove Theorem \ref{thm:panicOT}, giving existence and uniqueness of weak measure solurtion to (\ref{eq:panic}) and (\ref{eq:panic2}). In a more general settings, we consider the system: 
\begin{align}
\pt_t \rho_i +\div (\rho_i V_i(x, \rho_{1}*\eta_{i,1}, \ldots, \rho_k*\eta_{i,k}))&=0\,,&(t, x)\in&\rpic\times\reali^N  \label{eq:system}\\
\rho_i(0)&=\bar\rho_i\,,& i\in &\{ 1,\ldots , k \}\,.\nonumber
\end{align}
in which the  coupling between the equations is only present through the nonlocal term.
Fixing the nonlocal term, we obtain a system of decoupled continuity equations:
\[
\left\{
\begin{array}{l}
\pt_t \rho_1 +\div (\rho_1\,b_1(t,x))=0\,,\\
\ldots\\
\pt_t \rho_k +\div (\rho_k\,b_k(t,x))=0\,,
\end{array}
\right.
\]
where $b_1, \ldots, b_k$ are regular with respect to $x$.

\subsection{Existence and uniqueness of weak measure solutions}

Let us remind the following definitions of weak measure solution, but also of Lagrangian solution. 
\begin{definition}
$\rho\in \L\infty([0,T], \mathcal{M}^+(\reali^N)^k)$ is a  {measure solution} of (\ref{eq:system}) if, $\forall \varphi \in \Cc\infty (]-\infty, T]\times\reali^N, \reali)$
\[
\int_0^T \int_{\reali^N} \big[ \pt_t\phi + V_i(x, \rho* \eta_i)\cdot\nabla\phi \big] \d{\rho^i_t (x)}\d{t} +\int_{\reali^N} \phi(0,x)\d{\bar \rho_i(x)}=0\,.
\]
\end{definition}

\begin{definition} $\rho\in \L\infty( [0,T], \Mes^+(\reali^N)^k)$ is a {Lagrangian solution}  with initial condition $\bar \rho\in \Mes^+(\reali^N)^k$ if there exists an ODE flow $X^i: [0,T]\times \reali^N \to \reali^N$, solution of 
\[
\left\{
\begin{array}{rl}
\dst\frac{\d{ X^i}}{\d{t}}(t,x)=&V_i(X^i(t,x), \rho_t* \eta^i(X^i(t,x)))\,,\\
\dst X^i(0,x)=&x\,;
\end{array}
\right.
\]
and such that $\rho^i_t={X^i_t}_\sharp \bar\rho^i$ where $X^i_t:\reali^N\to\reali^N$ is the map defined as $X^i_t(x)=X^i(t,x)$ for any $(t,x)\in \rpic\times\reali^N$. 
\end{definition}

\begin{definition}
Let $\mu $ be measure on $\Omega$ and $T:\Omega\to \Omega'$ a measurable map. Then $T_\sharp \mu$ is the {push-forward} of $\mu$ if  for any $\phi\in \Cc0(\Omega')$, 
$$\int_{\Omega'}\phi(x)\d{T_\sharp\mu(x)}=\int_{\Omega}\phi\left( T(y)\right) \d{\mu(y)}\,.$$  

If $T_\sharp\d\mu(x)=f(x)\d{x}$ and $\d\mu(y)=g(y)\d{y}$; and $T$ is a $\C1$-diffeomorphism. Then we have the \emph{change of variable formula}
$$g(x)=f(T(x))\modulo{\det(\nabla T(x))}\,.$$
\end{definition}

Instead of proving directly Theorem \ref{thm:panicOT} (or Theorem \ref{thm:panicOT2}), we prove first the existence and uniqueness of lagrangian solutions. 
\begin{theorem}\label{thm:main}
Let $\bar\rho \in \Mes^+(\reali^N,\reali^k)$. Let us assume that $V\in (\L\infty\cap\lip)(\reali^N\times\reali^k,
\reali^{N\times k})$ and that $\eta\in(\L\infty\cap\lip)(\reali^N, \reali^{k\times k})$. Then {there exists a unique  Lagrangian solution} to  system (\ref{eq:system}) with initial condition $\bar \rho$.
%
\end{theorem}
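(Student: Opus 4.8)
The plan is to apply the fixed-point scheme \textbf{(a)}--\textbf{(b)}--\textbf{(c)} from Section \ref{sec:intro}, in which the frozen datum $r$ is now the nonlocal term and the distance $d$ is a Wasserstein-type distance on measures. I work on the space
\[
X=\Big\{ r\in \C0([0,T],\Mes^+(\reali^N)^k)\;:\; \norma{r^i_t}=\norma{\bar\rho^i}\ \text{ for all } t\in[0,T],\ i\in\{1,\dots,k\}\Big\},
\]
equipped with $d(r,\tilde r)=\sup_{t\in[0,T]}\sum_{i=1}^k W_1(r^i_t,\tilde r^i_t)$, the restriction to fixed masses being natural since push-forward preserves total mass. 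First I would \emph{freeze the nonlocal term}: given $r\in X$, set for each $i$
\[
b_i[r](t,x)=V_i\big(x,\ r^1_t*\eta_{i,1}(x),\ \dots,\ r^k_t*\eta_{i,k}(x)\big).
\]
Because $\eta\in(\L\infty\cap\lip)$ and each $r^j_t$ is a positive measure of fixed mass $\norma{\bar\rho^j}$, every convolution $r^j_t*\eta_{i,j}$ is bounded and Lipschitz in $x$, uniformly in $t$; since $V\in(\L\infty\cap\lip)$, it follows that $b_i[r](t,\cdot)$ is bounded and Lipschitz, with constants depending only on $\norma{V}_{\L\infty},\lip(V),\norma{\eta}_{\L\infty},\lip(\eta)$ and the masses $\norma{\bar\rho^j}$. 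This is exactly the decoupled linear-continuity-equation setting announced after \eqref{eq:system}.

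For point \textbf{(b)}, with $b_i[r]$ bounded and Lipschitz in $x$, the Cauchy problem $\dot X^i=b_i[r](t,X^i)$, $X^i(0,x)=x$, admits a unique globally defined flow by the Cauchy--Lipschitz theorem, boundedness of $b_i$ excluding finite-time blow-up. Setting $\rho^i_t=(X^i_t)_\sharp\bar\rho^i$ produces a curve in $X$ which is the unique Lagrangian solution of the frozen linear system, and this defines the map $\mathscr{Q}:r\mapsto\rho$ on $X$. Completeness of $X$ under $d$ (point \textbf{(a)}) is standard, the masses being fixed.

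The heart of the argument is point \textbf{(c)}, the contraction estimate comparing $\mathscr{Q}(r)$ and $\mathscr{Q}(\tilde r)$, for which I would chain three ingredients. \emph{(i) Stability of the frozen velocity:} by Kantorovich--Rubinstein duality, since $y\mapsto\eta_{i,j}(x-y)$ is Lipschitz,
\[
\modulo{(r^j_t*\eta_{i,j})(x)-(\tilde r^j_t*\eta_{i,j})(x)}\leq \lip(\eta_{i,j})\,W_1(r^j_t,\tilde r^j_t),
\]
whence $\norma{b_i[r](t,\cdot)-b_i[\tilde r](t,\cdot)}_{\L\infty}\leq C\sum_j W_1(r^j_t,\tilde r^j_t)$. \emph{(ii) Stability of the flow:} applying Gronwall to $\frac{d}{dt}\modulo{X^i_t(x)-\tilde X^i_t(x)}$, using the Lipschitz bound on $b_i[r]$ and the velocity difference from (i), gives $\norma{X^i_t-\tilde X^i_t}_{\L\infty}\leq C\,e^{Ct}\int_0^t\sum_j W_1(r^j_\tau,\tilde r^j_\tau)\,\d{\tau}$. \emph{(iii) Stability of push-forward:} transporting along $(X^i_t,\tilde X^i_t)_\sharp\bar\rho^i$,
\[
W_1\big((X^i_t)_\sharp\bar\rho^i,\ (\tilde X^i_t)_\sharp\bar\rho^i\big)\leq \norma{\bar\rho^i}\,\norma{X^i_t-\tilde X^i_t}_{\L\infty}.
\]
Combining (i)--(iii) yields $d(\mathscr{Q}(r),\mathscr{Q}(\tilde r))\leq C\,T\,e^{CT}\,d(r,\tilde r)$, so for $T$ small $\mathscr{Q}$ is a contraction; Banach's fixed-point theorem gives a unique fixed point, which is the unique Lagrangian solution on $[0,T]$, and iterating over consecutive intervals produces the global solution.

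The main obstacle is the contraction estimate \textbf{(c)}, and two technical points within it deserve care. First, all constants must be \emph{uniform in time}, which relies on mass conservation under push-forward ($\norma{\rho^i_t}=\norma{\bar\rho^i}$) to keep the sup- and Lipschitz bounds on the convolutions from degrading. Second, the distance in \textbf{(a)} must simultaneously make $X$ complete and support estimate (i); for general positive measures the plain $W_1$ requires finite first moments (propagated here because $V$ is bounded), and one may instead use the bounded-Lipschitz distance, for which (i) holds with $\max(\norma{\eta}_{\L\infty},\lip(\eta))$ in place of $\lip(\eta)$ and completeness on the mass-fixed set is immediate. Everything else --- Cauchy--Lipschitz, Gronwall, and the change of variables --- is routine.
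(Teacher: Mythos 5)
Your proposal is correct and follows essentially the same route as the paper: freeze the nonlocal term, solve the resulting decoupled continuity equations by the Cauchy--Lipschitz flow and push-forward, and obtain a contraction for $\sup_{t}\mathcal{W}_1$ via the Kantorovich--Rubinstein duality bound $\modulo{(r-\tilde r)*\eta}\leq \lip(\eta)\,W_1(r,\tilde r)$, then conclude by Banach fixed point on a small time interval and iterate using conservation of mass. The only (harmless) difference is internal to the stability step: you compare the two flows in sup norm and transport along the diagonal coupling $x\mapsto\bigl(X_t(x),\tilde X_t(x)\bigr)$, whereas the paper pushes forward an arbitrary plan $\gamma_0\in\Xi(\bar\rho,\bar\sigma)$ and runs Gronwall on $Q(t)=\int\modulo{X_t(x)-Y_t(y)}\d{\gamma_0(x,y)}$, which is the same computation but stated for two different initial data, so it additionally yields continuous dependence on the initial condition.
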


\begin{remark}
Note that a Lagrangian solution is also a weak measure solution. 

Besides, we recover the results of Theorem \ref{thm:panicK}, except the continuity in time. Indeed, Lagrangian solutions conserve the regularity of the initial condition along the time:
if $\bar \rho\in \L1(\reali^N,(\reali^+)^k)$ then $\rho\in \L\infty(\reali^+, \L1(\reali^N, (\reali^+)^k))$  and we have $$\norma{\rho(t)}_{\L1}=\norma{\bar\rho}_{\L1}.$$

\vspace{0.2cm}
If moreover $\bar \rho \in (\L1\cap\L\infty)(\reali^N,(\reali^+)^k)$, then $\rho(t)\in \L\infty$ for all $t\geq 0$ and we have the estimate 
\[
\norma{\rho(t)}_{\L\infty}\leq \norma{\bar\rho}_{\L\infty}e^{Ct}\,,
\]
where $C$ depends on $\norma{\bar\rho}_{\Mes}$,  $V$ and $\eta$.
\end{remark}

We conclude the proof of Theorem \ref{thm:panicOT} stating the uniqueness of measure solutions, so that the Lagrangian solution obtained is also the unique measure solutions.
\begin{proposition}\label{prop:uni}
Under the same set of hypotheses as Theorem \ref{thm:main}, the weak measure solutions for (\ref{eq:system}) are unique.
\end{proposition}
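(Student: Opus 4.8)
The plan is to prove uniqueness of weak measure solutions by reducing it to the uniqueness of Lagrangian solutions, which is already guaranteed by Theorem \ref{thm:main}. The key observation is that since the coupling between the equations of (\ref{eq:system}) occurs only through the nonlocal convolution terms, once a measure solution $\rho = (\rho_1, \ldots, \rho_k)$ is given, the quantities $b_i(t,x) := V_i(x, \rho_t * \eta_i)$ become \emph{fixed} vector fields. The strategy is therefore to show that any weak measure solution must in fact coincide with the transported measure along the flow generated by these $b_i$, and then exploit stability estimates to close a Gronwall-type argument on the distance between two candidate solutions.

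First I would establish the regularity of the vector fields $b_i$ induced by any measure solution. Since $V_i \in (\L\infty \cap \lip)$ and $\eta_i \in (\L\infty \cap \lip)$, and since $\rho_t$ has uniformly bounded total mass (the convolution $\rho_t * \eta_i$ is Lipschitz and bounded with constants depending only on $\norma{\bar\rho}_{\Mes}$ and $\eta_i$), each $b_i(t, \cdot)$ is bounded and Lipschitz in $x$, uniformly in $t$. This puts us squarely in the classical Cauchy--Lipschitz setting: the ODE $\dot X = b_i(t, X)$ admits a unique flow $X^i$. The second step is to verify that the given measure solution $\rho_i$ must equal ${X^i_t}_\sharp \bar\rho_i$, i.e.\ that it is precisely the Lagrangian solution associated with its own induced field; this is the standard superposition/representation result for continuity equations with Lipschitz drift, and it shows every measure solution is Lagrangian.

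The main step, and the expected obstacle, is the \emph{stability} estimate needed to conclude. Given two measure solutions $\rho$ and $\sigma$, each is Lagrangian for its own field $b_i^\rho$ and $b_i^\sigma$; the difficulty is that these fields differ precisely because $\rho \neq \sigma$, so the two flows diverge. I would estimate a suitable Wasserstein distance $W_1(\rho_t^i, \sigma_t^i)$ (or a $1$-Wasserstein-type functional summed over $i$) between the two solutions. The Lipschitz dependence of $V_i$ and $\eta_i$ gives a bound of the form $\norma{b_i^\rho(t,\cdot) - b_i^\sigma(t,\cdot)}_{\L\infty} \leq L \sum_j W_1(\rho_t^j, \sigma_t^j)$, using that convolution against the Lipschitz kernel $\eta_{i,j}$ controls differences of measures by their Wasserstein distance. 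Combining this with the standard contraction/expansion estimate for push-forwards under close Lipschitz flows yields a differential inequality
\begin{equation*}
\frac{\d{}}{\d{t}} \sum_i W_1(\rho_t^i, \sigma_t^i) \leq C \sum_i W_1(\rho_t^i, \sigma_t^i)\,.
\end{equation*}
Since $\rho_0 = \sigma_0 = \bar\rho$, the initial distance vanishes, and Gronwall's lemma forces $W_1(\rho_t^i, \sigma_t^i) = 0$ for all $t$, hence $\rho = \sigma$.

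The delicate point to handle with care is the justification that convolution with $\eta$ transforms the weak (Wasserstein) distance between measures into an $\L\infty$ (or Lipschitz) bound on the drift fields, as this is what couples the stability estimate to the quantity being controlled; the duality characterization of $W_1$ together with the Lipschitz regularity of $\eta_{i,j}$ is exactly the tool that makes this work, and I would present that inequality explicitly before invoking Gronwall. With uniqueness of measure solutions established and the Lagrangian solution from Theorem \ref{thm:main} being in particular a measure solution, one concludes that the Lagrangian solution is the unique measure solution, completing the proof of Theorem \ref{thm:panicOT}.
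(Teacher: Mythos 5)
Your proposal is correct, and its skeleton is the same as the paper's: reduce uniqueness of measure solutions to uniqueness of Lagrangian solutions by showing that any measure solution is transported by the flow of its own frozen field $b_i(t,x)=V_i(x,\rho_t*\eta_i)$. The difference lies in where the mathematical work is placed. The paper's entire proof of Proposition \ref{prop:uni} \emph{is} the step you cite as ``the standard superposition/representation result'': it compares the given measure solution $\rho$ with the Lagrangian solution $\sigma$ of the \emph{linear} equation $\pt_t\sigma+\div(\sigma b)=0$, sets $\delta=\rho-\sigma$, and kills $\delta$ by duality --- for every test function $\psi$ one solves the backward transport equation $\pt_t\phi+b\cdot\nabla\phi=\psi$ (possible by Cauchy--Lipschitz since $b$ is bounded and Lipschitz) and concludes $\int\!\!\int\psi\,\d{\delta_t}\d{t}=0$, hence $\delta\equiv 0$; once $\rho$ is known to be Lagrangian, the paper simply invokes the already-proved uniqueness of Lagrangian solutions in Theorem \ref{thm:main} and stops. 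You do the opposite: you black-box the linear uniqueness and instead re-derive the nonlinear uniqueness by a $W_1$--Gronwall argument. That Gronwall step is correct (the bound $\norma{b_i^\rho-b_i^\sigma}_{\L\infty}\leq \lip_r(V_i)\sum_j\lip(\eta_{i,j})\,W_1(\rho_t^j,\sigma_t^j)$ via Kantorovich--Rubinstein duality is exactly Proposition \ref{prop:villani}, and the flow-comparison is the paper's coupling functional $Q(t)$ from the stability proposition in Section \ref{sec:proofot}), but it duplicates machinery the paper already has; given Theorem \ref{thm:main}, your final step could be a one-line citation.

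Two remarks to tighten your write-up. First, your appeal to the ``standard'' result is legitimate for bounded Lipschitz drifts, but in a self-contained proof this is precisely the step that must be argued; the paper's duality construction is the economical way to do it, and you should either reproduce it or give a precise reference, since it is the only genuinely new content of the proposition. Second, $t\mapsto W_1(\rho_t,\sigma_t)$ is not a priori differentiable, so your differential inequality should be run on the Lipschitz coupling functional $Q(t)=\int\modulo{X_t(x)-Y_t(x)}\d{\bar\rho(x)}$ (using that both solutions start from the same $\bar\rho$, so $Q(0)=0$) and then transferred to $W_1$ via $W_1(\rho_t,\sigma_t)\leq Q(t)$; this is a cosmetic fix, exactly as in the paper's stability estimate.
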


\subsection{Proof of Theorem \ref{thm:main}} \label{sec:proofot}
Once again, the proof is based on the scheme described in Section \ref{sec:intro}. We consider below probability measures instead ; for any $i$, we could also consider positive measures of fixed total mass $\norma{\bar\rho^i}_{\mathcal{M}}$.

\noindent\textbf{(a)} Let us first introduce  a space 
$$ X=\L\infty([0,T], \mathcal{P}(\reali^N)^k)\,.
$$
We equip this space with the distance $d(\mu, \nu)=\sup_{t\in [0,T]} \mathcal{W}_1(\mu_t, \nu_t)$. The Wasserstein distance $\mathcal{W}_1$ is defined as follow:
\begin{definition}
Let $\mu$, $\nu\in \mathcal{P}(\reali^N)$. Let us denote $\mathbb{P}_x:\reali^d\times\reali^d\to \reali^d$ the \emph{projection on the first coordinate}; that is, for any $(u,v)\in \reali^d\times\reali^d$,  $\mathbb{P}_x(u,v)=u$. In a similar way, $\mathbb{P}_y:\reali^d\times\reali^d\to \reali^d$ is the \emph{projection on the second coordinate}; that is, for any $(u,v)\in \reali^d\times\reali^d$,  $\mathbb{P}_y(u,v)=v$. We denote $\Xi\, (\mu, \nu)$  the set of \emph{plans}, that is
$$\Xi(\mu, \nu)=\left\{\gamma\in \mathcal{P}(\reali^N\times \reali^N) : {\mathbb P_x}_\sharp \gamma=\mu \textrm{ and }{\mathbb P_y}_\sharp \gamma=\nu\right\}.$$ 

The \emph{Wasserstein distance of order one} between $\mu$ and $\nu$  is
\[
W_1(\mu, \nu)= \inf_{\gamma \in \,\Xi\, (\mu, \nu)} \int_{\reali^N\times\reali^N} \modulo{x-y}\d{\gamma (x,y)}\,.
\]

Let $\rho=(\rho^1, \ldots , \rho^k)$, $\sigma=(\sigma^1, \ldots , \sigma^k)\in \mathcal{P}(\reali^N)^k$. The \emph{Wasserstein distance of order one} between $\rho$ and $\sigma$, denoted $\mathcal{W}_1(\rho, \sigma)$, as
\[
\mathcal{W}_1(\rho, \sigma)=\sum_{i=1}^k W_1(\rho^i, \sigma^i)\,.
\]
\end{definition}

Let us recall the following duality formula:
\begin{proposition}[cf. Villani {\cite[p. 207]{Villani}}] \label{prop:villani}
Let $f,g$ be two probability measures. The Wasserstein distance of order one between $f$ and $g$ satisfies
\[
W_1(f,g)=\sup_{\lip (\phi)\leq 1 } \int_{\reali^d}\phi(x)\left(\d{f(x)}-\d{g(x)}\right)\,.
\]
\end{proposition}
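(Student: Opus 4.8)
The plan is to establish the two inequalities separately; the nontrivial content is the reduction of the general Kantorovich dual to a single $1$-Lipschitz test function once the cost is the distance $\modulo{x-y}$. I would first dispatch the easy inequality $\sup_{\lip(\phi)\leq 1}\int_{\reali^d}\phi\,(\d{f}-\d{g})\leq W_1(f,g)$. Fix $\phi$ with $\lip(\phi)\leq 1$ and any plan $\gamma\in\Xi(f,g)$. Since the marginals of $\gamma$ are $f$ and $g$, and $\phi(x)-\phi(y)\leq\modulo{x-y}$,
\[
\int_{\reali^d}\phi\,\d{f}-\int_{\reali^d}\phi\,\d{g}=\int_{\reali^d\times\reali^d}\left(\phi(x)-\phi(y)\right)\d{\gamma(x,y)}\leq\int_{\reali^d\times\reali^d}\modulo{x-y}\,\d{\gamma(x,y)}\,.
\]
Taking the infimum over $\gamma\in\Xi(f,g)$ gives $\int\phi\,(\d{f}-\d{g})\leq W_1(f,g)$, and the supremum over admissible $\phi$ yields the claimed inequality.

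For the converse I would start from the general Kantorovich duality (cited from \cite{Villani}, or derived by a Fenchel--Rockafellar / minimax argument),
\[
W_1(f,g)=\sup\left\{\int_{\reali^d}\phi\,\d{f}+\int_{\reali^d}\psi\,\d{g}\ :\ \phi(x)+\psi(y)\leq\modulo{x-y}\right\}\,,
\]
the supremum running over bounded continuous pairs. The key step is then to exploit that the cost is a metric, through the $c$-transform $\phi^c(y)=\inf_{x\in\reali^d}\left(\modulo{x-y}-\phi(x)\right)$. Three elementary facts drive the reduction: (i) $\phi^c$ is $1$-Lipschitz, since $\modulo{x-y}\leq\modulo{x-y'}+\modulo{y-y'}$ forces $\phi^c(y)\leq\phi^c(y')+\modulo{y-y'}$; (ii) any admissible $\psi$ satisfies $\psi\leq\phi^c$, so replacing $\psi$ by $\phi^c$ can only raise the dual value while keeping the pair admissible; (iii) for a $1$-Lipschitz $h$ one has $h^c=-h$ (take $x=y$ for ``$\leq$'', and use $h(x)\leq h(y)+\modulo{x-y}$ for ``$\geq$''). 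Applying the $c$-transform twice therefore turns every admissible pair into one of the form $(h,-h)$ with $h$ $1$-Lipschitz, without decreasing the value; hence the dual supremum equals $\sup_{\lip(h)\leq 1}\int h\,(\d{f}-\d{g})$, which is the asserted formula.

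The genuinely hard point is the general Kantorovich duality itself, that is, the absence of a duality gap between the transport problem and its dual. Proving it from scratch requires a functional-analytic tool --- Fenchel--Rockafellar convex duality on $\C0_b$ against its dual of measures, or a minimax theorem --- together with a tightness argument (Prokhorov) to guarantee that the infimum over $\Xi(f,g)$ is attained; by contrast, the $c$-transform reduction above is completely elementary. As a self-contained alternative I would first prove the identity for finitely supported $f,g$, where it is exactly strong duality of a finite linear program and the $1$-Lipschitz potential is furnished by the McShane extension $h(x)=\min_i\left(h(x_i)+\modulo{x-x_i}\right)$, and then pass to the general case by weak-$*$ approximation, using tightness to control both sides in the limit.
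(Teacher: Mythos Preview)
Your argument is correct and follows the standard route (easy inequality via marginals, then Kantorovich duality plus the $c$-transform reduction exploiting that the cost is a metric). Note, however, that the paper does not prove this proposition at all: it is merely \emph{recalled} with a citation to Villani, so there is no paper proof to compare against. What you have written is precisely the content behind that citation.
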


\noindent\textbf{(b)} Existence of Lagrangian solutions. 

Let $r\in \L\infty([0,T], \mathcal{P}(\reali^N)^k)$. \\
Define $b_i(t,x)=V_i(x, r_t*\eta_i)\in\L\infty([0,T], \W1\infty(\reali^N)^k)$. Let us consider the equation
\begin{equation}\label{eq:cont}
\pt_t\rho_i+\div \left(\rho_i  b_i(t,x)\right)=0.
\end{equation}
Let $\rho_i={X_t}_\sharp\bar\rho$ be the Lagrangian solution of (\ref{eq:cont}). Then the application
 $$\mathscr{T}:r\in \L\infty([0,T], \mathcal{P}(\reali^N)^k) \mapsto \rho\in \L\infty([0,T], \mathcal{P}(\reali^N)^k)\,.$$
is well-defined.

\noindent\textbf{(c)} Stability estimate.
\begin{proposition}
Let $\bar \rho, \bar \sigma\in \mathcal{P}(\reali^N)$ and $r, s\in \C0([0,T],\mathcal{P}(\reali^N))$.
Let  $V \in (\L\infty\cap\lip)(\reali^N\times \reali^k,\reali^N)$, $\eta, \nu\in (\L\infty\cap\lip)(\reali^N,\reali)$. If $\rho$ and $\sigma$ are Lagrangian solutions of 
\[
\begin{array}{l}
\pt_t \rho +\div (\rho \,V( x, r* \eta ))=0\,,\qquad \rho (0,\cdot)=\bar\rho \,,\\
\pt_t \sigma +\div (\sigma\,  V( x, s* \eta ))=0\,,\qquad \sigma(0,\cdot)=\bar\sigma\,.
\end{array}
\]
We have the estimate:
\[
\mathcal{W}_1(\rho_T, \sigma_T)\leq e^{CT} \mathcal{W}_1(\bar\rho, \bar \sigma)+ T \,e^{CT} \,C' \,\sup_{t\in [0,T]} \mathcal{W}_1(r_t, s_t) \,,
\]
where $C=\lip_x(V)+\lip_r (V)\lip(\eta) \norma{\bar \rho}_{\Mes}+ \lip_r(V)\lip (\eta) \norma{\bar\rho}_{\Mes}$ and $C'=\lip_r(V)\lip(\eta)\norma{\bar\rho}_{\Mes}$.
\end{proposition}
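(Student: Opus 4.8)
The plan is to exploit the Lagrangian structure of both solutions and transport an optimal coupling of the initial data along the two characteristic flows. Writing $b(t,x)=V(x,(r_t*\eta)(x))$ and $c(t,x)=V(x,(s_t*\eta)(x))$, the hypotheses on $V$ and $\eta$ guarantee that $b,c\in\L\infty([0,T],\W1\infty(\reali^N))$, so the Cauchy--Lipschitz theorem furnishes flows $X^\rho,X^\sigma$ with $\rho_t=(X^\rho_t)_\sharp\bar\rho$ and $\sigma_t=(X^\sigma_t)_\sharp\bar\sigma$. I would carry out a scalar estimate first; should $V$ depend on several convolutions or on several populations, the very same splitting applies componentwise and one sums the resulting bounds, since $\mathcal{W}_1=\sum_i W_1$.

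The key device is the following coupling. Let $\gamma_0\in\Xi(\bar\rho,\bar\sigma)$ be an optimal plan, so that $\int_{\reali^N\times\reali^N}\modulo{x-y}\d{\gamma_0(x,y)}=W_1(\bar\rho,\bar\sigma)$. Pushing it forward by the product map $(X^\rho_t,X^\sigma_t)\colon\reali^N\times\reali^N\to\reali^N\times\reali^N$ yields $\gamma_t=(X^\rho_t,X^\sigma_t)_\sharp\gamma_0$, whose marginals are exactly $\rho_t$ and $\sigma_t$; hence $\gamma_t\in\Xi(\rho_t,\sigma_t)$ and
\[
W_1(\rho_t,\sigma_t)\le g(t):=\int_{\reali^N\times\reali^N}\modulo{X^\rho_t(x)-X^\sigma_t(y)}\d{\gamma_0(x,y)},\qquad g(0)=W_1(\bar\rho,\bar\sigma).
\]
Everything then reduces to a Gronwall estimate on $g$.

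To run it, I would use the integral form of the two ODEs to bound the growth of $\modulo{X^\rho_t(x)-X^\sigma_t(y)}$ through $\modulo{b(t,X^\rho_t(x))-c(t,X^\sigma_t(y))}$, splitting this velocity difference into a part evaluated at the frozen density $r$ but at the two different base points, and a part evaluated at a common base point but at densities $r$ versus $s$. The first part is controlled by the spatial Lipschitz constant of $x\mapsto V(x,(r_t*\eta)(x))$, in which the convolution contributes $\lip(\eta)\norma{r_t}_{\Mes}=\lip(\eta)\norma{\bar\rho}_{\Mes}$, thus producing the growth constant $C$. The second part is the genuinely nonlocal term: applying the Kantorovich--Rubinstein duality of Proposition \ref{prop:villani} to the $\lip(\eta)$-Lipschitz test function $z\mapsto\eta(\,\cdot-z)$ turns $\modulo{(r_t*\eta)(q)-(s_t*\eta)(q)}$ into a multiple of $W_1(r_t,s_t)$, producing the source constant $C'$. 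Integrating against $\gamma_0$, bounding the source by $\sup_t W_1(r_t,s_t)$, and applying Gronwall yields $g(T)\le e^{CT}g(0)+T\,e^{CT}\,C'\sup_{t}W_1(r_t,s_t)$, which is the claim.

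The main obstacle is the nonlocal coupling: one must both verify that $x\mapsto(r_t*\eta)(x)$ is Lipschitz with constant $\lip(\eta)\norma{\bar\rho}_{\Mes}$, so that the frozen field $b$ is genuinely $\W1\infty$ and the flow is well defined, and quantify the discrepancy between the two frozen fields by $W_1(r_t,s_t)$ rather than by a stronger norm. The duality formula is exactly what makes the latter possible, and it is precisely this $W_1$-dependence of the source that will later render the fixed-point map $\mathscr{T}$ a contraction for small $T$. The remaining computations are routine Gronwall bookkeeping.
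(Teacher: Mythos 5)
Your proof is correct and follows essentially the same route as the paper's: both transport a coupling of the initial data along the two ODE flows via the product map, split the velocity difference by the triangle inequality into a Lipschitz-in-$x$ term (yielding $C$) and a nonlocal term controlled through the Kantorovich--Rubinstein duality of Proposition \ref{prop:villani} (yielding $C'$), and conclude with a Gronwall argument together with $\mathcal{W}_1(\rho_t,\sigma_t)\leq Q(t)$. The only cosmetic difference is that you fix an optimal plan $\gamma_0$ at the outset, whereas the paper works with a generic plan and optimizes only at the final step.
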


\begin{proof}
Let $X, Y$ be the ODE flows associated to $\rho, \sigma$. Let $\gamma_0\in \Xi(\bar\rho, \bar \sigma)$. 
Define
\[
X_t\Join Y_t : (x,y)\mapsto (X(x), Y(y))\,.
\]
Then $\gamma_t=(X_t\Join Y_t )_\sharp \gamma_0\in \Xi(\rho_t, \sigma_t)$. Let us  introduce
\[
Q(t)=\int_{\reali^N\times\reali^N} \modulo{x-y}\d{\gamma_t(x,y)}=\int_{\reali^N\times\reali^N}\modulo{X_t(x)-Y_t(y)}\d{\gamma_0(x,y)}\,.
\]
Then $Q$ is a Lipschitz function and   
\[
Q'(t)\leq \int_{\reali^N\times\reali^N} \modulo{V(X_t(x), r_t*\eta(X_t(x))) -V(Y_t(y), s_t*\eta(Y_t(x))) }\d{\gamma_0(x,y)}.
\]
By triangular inequality, we obtain
\begin{align*}
Q'(t)\leq  &  (\lip_x(V)+\lip_r(V)\lip(r_t*\eta)) Q(t)+\lip_r(V)\int_{\reali^N\times\reali^N} \modulo{(r_t-s_t)*\eta(Y_t(y))}\d{\gamma_0(x,y)}\,.
\end{align*}
Note besides that, thanks to Proposition \ref{prop:villani}, we have
\[
{(r^i_t-s^i_t)*\eta(z)}=\int_{\reali^N}\eta(z-\zeta)(\d{r^i_t(\zeta)}-d{s^i_t(\zeta)})\leq \lip(\eta)W_1(r_t^i,s_t^i)\,.
\]
Integrating, we get 
\[
Q(t)\leq Q(0)e^{Ct} +t C'\,e^{C t}\sup_\tau \mathcal{W}_1(r_t, s_t)\,.
\]
where $C=\lip_x(V)+\lip_r(V)\norma{r_t}_{\Mes} \lip(\eta)$, $C'=\lip_r(V)\lip(\eta) \norma{\bar\rho}_{\Mes}$.

We conclude taking $\gamma_0$ in an optimal way so that $Q(0)=\mathcal{W}_1(\rho_0, \sigma_0)$ and using the inequality $$\mathcal{W}_1(\rho_t, \sigma_t)\leq Q(t)\,.$$
\end{proof}

The stability estimate allows us to apply Banach fixed point Theorem for $T$ small enough.

\subsection{Measure solutions are Lagrangian solutions}
\begin{proofof}{Proposition \ref{prop:uni}}
Let $\rho$ be a measure solution of (\ref{eq:system}). Let $b=V(x, \rho*\eta)$ and
denote $\sigma$ the Lagrangian solution associated to $\pt_t\sigma+\div (\sigma b)=0$ with $\sigma(0)=\bar \rho$.

Then $\delta=\rho-\sigma$ is a measure solution of 
$
\pt_t \delta+\div (\delta b)=0$, with $\delta(0)=0$. 
That is to say, for any $\phi\in \Cc\infty(]-\infty, T]\times \reali^N, \reali)$, 
$$
\int_0^T\int_{\reali^N} \left(\pt_t \phi+b^i(t,x) \cdot\nabla\phi\right)\d{\delta_t}\d{t}=0\,.
$$
Let $\psi \in \Cc0(]-\infty, T]\times\reali^N, \reali)$.  We can find $\phi\in \Cc1(]-\infty, T]\times\reali^N, \reali )$ so that $\psi=\pt_t\phi+b^i(t,x)\cdot \nabla \phi$. Hence, for any $\psi\in \Cc0(]-\infty, T]\times\reali^N, \reali)$, we have $\int_0^T \int_{\reali^N}\psi \d{\delta_t}\d{t}=0$, which implies $\delta\equiv 0$ a.e.  so  $\rho=\sigma$ a.e..
Consequently, we have  $b^i(t,x)=V^i(x, \sigma * \eta^i)$, and $\rho$ is a Lagrangian solution of (\ref{eq:system}). 
\end{proofof}

\section{Conclusion}

In the Kru\v zkov framework, we are able to prove existence and uniqueness of weak entropy solution for 
 the equation $\pt_t \rho+\div (\rho V(x, {\rho}, \rho*\eta))=0.$
Furthermore, we can prove  uniform bound in $\L\infty$ if $V=v(\rho)\vec W(x, \rho*\eta)$, with $v(1=)0$. However, the required hypotheses are very strong: we need indeed $V\in \C2\cap\W21\cap\W2\infty$. 

In the optimal transport theory framework, we can treat only equations such that
$\pt_t \rho+\div (\rho V(x, \rho*\eta))=0\,.$
For this equation, we have only $\L\infty$ bound that are exponentially growing in time. The hypotheses are nevertheless weaker since we only ask $V\in \Lip\cap\L\infty$, but we are no longer able to prove the Gâteaux-differentiability of the semi-group.

{\small

  \bibliography{procCemracs}

  \bibliographystyle{abbrv}
}

\end{document}